\providecommand{\R}{}
\providecommand{\N}{}
\renewcommand{\R}{\mathbb{R}}
\renewcommand{\N}{{\mathbb N}}
\newcommand{\E}[1]{{\mathbf E}\left[#1\right]}
\newcommand{\e}{{\mathbf E}}
\newcommand{\p}[1]{{\mathbf P}\left\{#1\right\}}
\providecommand{\P}{}
\renewcommand{\P}[1]{{\mathbf P}\left\{#1\right\}}
\newcommand{\I}[1]{{\mathbf 1}_{[#1]}}
\newcommand{\set}[1]{\left\{ #1 \right\}}
\newcommand{\Cexp}[2]{\mathbf{E}\set{\left. #1 \; \right| \; #2}}
 \newcommand{\bag}{\begin{align}}
\newcommand{\bags}{\begin{align*}}
\newcommand{\eag}{\end{align*}}
\newcommand{\eags}{\end{align*}}
\newtheorem{thm}{Theorem}[section]
\newtheorem{lem}[thm]{Lemma}
\newtheorem{prop}[thm]{Proposition}
\newtheorem{cor}[thm]{Corollary}
\newtheorem{ex}{Exercise}[section]
\newtheorem{sex}[ex]{Exercise}
\newtheorem{fact}[thm]{Fact}
\numberwithin{equation}{section}
\newcommand\cC{\mathcal C}
\newcommand{\pran}[1]{\left(#1\right)}
\providecommand{\eps}{}
\renewcommand{\eps}{\epsilon}
\providecommand{\ora}[1]{}
\renewcommand{\ora}[1]{\overrightarrow{#1}}
\definecolor{clou}{rgb}{0.8,0.25,0.5125}
\newcommand\urladdrx[1]{{\urladdr{\def~{{\tiny$\sim$}}#1}}}
\xdef\oclock{\the\count1:0\the\count255}
\xdef\oclock{\the\count1:\the\count255}\fi
\DeclareRobustCommand{\SkipTocEntry}[5]{}
\definecolor{shadecolor}{rgb}{0.9,0.9,0.9}
\definecolor{mygray}{gray}{0.97}
\definecolor{redborder}{rgb}{0.4,0,0}
\definecolor{boxbackground}{gray}{0.9}
\newcommand{\bex}{\begin{leftbar} \begin{ex}}
\newcommand{\bsex}{\begin{leftbar} \begin{sex} {\large \textrm{$\circledast$~}}}
\newcommand{\eex}{\end{ex} \end{leftbar}}
\newcommand{\esex}{\end{sex} \end{leftbar}}
\renewcommand{\[}{\begin{equation*}}
\renewcommand{\]}{\end{equation*}}
\providecommand{\q}{}
\renewcommand{\q}{\alpha} 
\newcommand{\chsec}[1]{ 
\iftoggle{bookversion}{%
  \chapter{#1}%
}{%
  \section{#1}%
}%
}
\newcommand{\secsub}[1]{ 
\iftoggle{bookversion}{%
  \section{#1}
}{%
  \subsection{#1}
}}
\newcommand{\subpar}[1]{ 
\iftoggle{bookversion}{%
  \subsection{#1}
}{%
\medskip
  \subsubsection{{#1}}
}}
\newcommand{\isfullwidth}[1]{
\iftoggle{bookversion}{
\begin{fullwidth}
#1
\end{fullwidth}
}{
#1
}}
\newenvironment{marginfig}{
\iftoggle{bookversion}{\begin{marginfigure}}{\begin{figure}}
}{\iftoggle{bookversion}{\end{marginfigure}}{\end{figure}}}
\noindent\textbf{Exercises}\begin{itshape}\par\noindent
\begin{document}

\title[Discrete coalescents]{Partition functions of discrete coalescents: from Cayley's formula to Frieze's $\zeta(3)$ limit theorem} 
\author{Louigi Addario-Berry}
\address{Department of Mathematics and Statistics, McGill University, 805 Sherbrooke Street West, 
		Montr\'eal, Qu\'ebec, H3A 2K6, Canada}
\email{louigi@math.mcgill.ca}
\date{July 31, 2014} 
\urladdrx{http://www.math.mcgill.ca/~louigi/}


\begin{abstract} 
In these expository notes, we describe some features of the multiplicative coalescent and its connection with random graphs and minimum spanning trees. We use Pitman's proof \citep{pitman99coalescent} of Cayley's formula, which proceeds via a calculation of the partition function of the additive coalescent, as motivation and as a launchpad. We define a random variable which may reasonably be called the empirical partition function of the multiplicative coalescent, and show that its typical value is exponentially smaller than its expected value. Our arguments lead us to an analysis of the susceptibility of the Erd\H{o}s-R\'enyi random graph process, and thence to a novel proof of Frieze's $\zeta(3)$-limit theorem for the weight of a random minimum spanning tree. 
\end{abstract}

\maketitle

\chsec{Introduction}\label{sec:intro}
Consider a discrete time process $(P_i,1 \le i \le n)$ of coalescing blocks, with the following dynamics. The process starts from the partition of $[n]=\{1,\ldots,n\}$ into singletons: $P_1 = \{\{1\},\ldots,\{n\}\}$. To form $P_{i+1}$ from $P_i$ choose two parts $P,P'$ from $P_i$ and merge them. 
We assume there is a function $\kappa$ such that the probability of choosing parts $P,P'$ is proportional to $\kappa(|P|,|P'|)$; call $\kappa$ a {\em gelation kernel}. 

Different gelation kernels lead to different dynamics. 
Three kernels whose dynamics have been studied in detail are $\kappa(x,y)=1$, $\kappa(x,y)=x+y$, and $\kappa(x,y)=xy$; these are often called Kingman's coalescent, the additive coalescent, and the multiplicative coalescent, respectively. In these cases there is a natural way to enrich the process and obtain a {\em forest-valued} coalescent. 

These notes are primarily focussed on the properties of the forest-valued multiplicative coalescent. We proceed from a statistical physics perspective, and begin by analyzing the partition functions of the three coalescents. Here is what we mean by this. Say that a sequence $(P_1,\ldots,P_n)$ of partitions of $[n]$ is an {\em $n$-chain} if $P_1=\{\{1\},\ldots,\{n\}\}$ is the partition of $n$ into singletons, and for $1 \le i < n$, $P_{i+1}$ can be formed from $P_i$ by merging two parts of $P_i$.
Think of $\kappa(x,y)$ as the number of possible ways to merge a block of size $x$ with one of size $y$. Then corresponding to an $n$-chain $P=(P_1,\ldots,P_n)$ there are 
\[
\prod_{i=1}^{n-1} \kappa(|A_i(P)|,|B_i(P)|)\, 
\]
possible ways that the coalescent may have unfolded; here we write $A_i(P)$ and $B_i(P)$ for the blocks of $P_i$ that are merged in $P_{i+1}$. 
Writing $\mathcal{P}_n$ for the set of $n$-chains, it follows that the total number of possibilities for the coalescent with gelation kernel $\kappa$ is 
\[
\sum_{P=(P_1,\ldots,P_n) \in \mathcal{P}_n} \prod_{i=1}^{n-1} \kappa\pran{|A_i(P)|,|B_i(P)|}\, ,
\]
and we view this quantity as the partition function of the coalescent with kernel $\kappa$. 

The partition functions of Kingman's coalescent and the additive and multiplicative coalescents have particularly simple forms: they are 
\begin{align*}
Z_{\textsc{kc}}(n)	& =n!(n-1)!\, ,\\
Z_{\textsc{ac}}(n)	& = n^{n-1}(n-1)!\, , \mbox{ and}\\
 Z_{\textsc{mc}}(n)	& =n^{n-2}(n-1)!\, .
 \end{align*}

These formulae are proved in Section~\ref{sec:threecoalescents}. A corollary of the formula for $Z_{\textsc{kc}}(n)$ is that the number of increasing trees with $n$ vertices is $(n-1)!$; this easy fact is well-known. The formula for $Z_{\textsc{ac}}(n)$ is due to Pitman \cite{pitman99coalescent}, who used it to give a beautiful proof of Cayley's formula; this is further detailed in Section~\ref{sec:cayleysformula}. 

It may seem surprising that the partition function of the multiplicative coalescent is so similar to that of the additive coalescent: near start of the process, when most blocks have size $1$, the additive coalescent has twice as many choices as the multiplicative coalescent. Later in the process, blocks should be larger, and one would guess that usually $xy > x+y$. Why these two effects should almost exactly cancel each other out is something of a mystery. On the other hand, the similarity of the partition functions may suggest that the additive and multiplicative coalescents have similar behaviour. 

A more detailed investigation will reveal interesting behaviour whose subtleties are not captured by the above formulae. 
We will see in Section~\ref{intro:mult} that there is a naturally defined ``empirical partition function'' $\hat{Z}_{\textsc{mc}}(n)$ such that $Z_{\textsc{mc}}(n)=\E{\hat{Z}_{\textsc{mc}}(n)}$. However, $\hat{Z}_{\textsc{mc}}(n)$ is typically {\em exponentially smaller} than $Z_{\textsc{mc}}(n)$ (see Corollary~\ref{cor:exp_decay}), so in a quantifiable sense, the partition function $Z_{\textsc{mc}}(n)$ takes the value it does due to extremely rare events. Correspondingly, it turns out that the behaviour of the additive and multiplicative coalescents are typically quite different. 

To analyze the typical value of $\hat{Z}_{\textsc{mc}}(n)$, we are led to develop the connection between the multiplicative coalescent and the classical Erd\H{o}s-R\'enyi random graph process $(G(n,p),0 \le p \le 1)$. The most technical part of the notes is the proof of a concentration result for the susceptibility of $G(n,p)$; this is Theorem~\ref{thm:susc_conc}, below. Using a well-known coupling between the multiplicative coalescent and Kruskal's algorithm for the minimum weight spanning tree problem, our susceptibility bound leads easily to a novel proof of the $\zeta(3)$ limit for the total weight of the minimum spanning tree of the complete graph (this is stated in Theorem~\ref{thm:frieze}, below).\footnote{We find this proof of the $\zeta(3)$ limit for the MST weight pleasing, as it avoids lemmas which involve estimating the number of unicyclic and complex components in $G(n,p)$; morally, the cycle structure of components of $G(n,p)$ should be unimportant, since cycles are never created in Kruskal's algorithm!}

\subsection*{Stylistic remarks}
\addtocontents{toc}{\SkipTocEntry}
The primary purpose of these notes is expository (though there are some new results, notably Theorems~\ref{thm:zmc_lower} and~\ref{thm:susc_conc}). Accordingly, we have often opted for repitition over concision. We have also included plenty of exercises and open problems (the open problems are mostly listed in Section~\ref{sec:openproblems}). Some exercises state facts which are required later in the text; these are distinguished by a {\large \textrm{$\circledast$}}. 

{
\hypersetup{linkcolor=blue}
\tableofcontents
}

\chsec{A tale of three coalescents}\label{sec:threecoalescents}
\secsub{Cayley's formula and Pitman's coalescent} \label{sec:cayleysformula}

We begin by describing the beautiful proof of 
Cayley's formula found by Jim Pitman, and its link with uniform spanning trees. Cayley's formula states that the number of trees with vertices $\{1,2,\ldots,n\}$ is $n^{n-2}$, or equivalently that the number of {\em rooted} trees with vertices labeled by $[n]:=\{1,2,\ldots,n\}$ is $n^{n-1}$. To prove this formula, \citet{pitman99coalescent} analyzes a process  we call Pitman's coalescent. To explain the process, we need some basic definitions. A {\em forest} is a graph with no cycles; its connected components are its trees. A {\em rooted forest} is a forest in which each tree $t$ has a distinguished root vertex $r(t)$. 

\medskip
\isfullwidth{
\begin{mdframed}[style=algorithm]
{\bf Pitman's Coalescent, Version 1.}
The process has $n$ steps, and at step $i$ consists of a rooted forest $F_i=\{T_1^{(i)},\ldots,T_{n+1-i}^{(i)}\}$ with $n+1-i$ trees. (At step $1$, these trees are simply isolated vertices with labels $1,\ldots,n$.) 
To obtain $F_{i+1}$ from $F_i$, choose a pair $(U_i,V_i)$, where $U_i \in [n]$ and $V_i$ is the root of some tree of $F_i$ not containing $U_i$, uniformly at random from among all such pairs. 
 Add an edge from $U_i$ to $V_i$, and root the resulting tree at the root of $U_i$'s old tree. The forest $F_{i+1}$ consists of this new tree together with the $n-i-1$ unaltered trees from $F_i$. 
\end{mdframed}
}
\medskip
The coalescents we consider all have the general form of Pitman's coalescent: they are forest-valued stochastic processes $(F_i,1 \le i \le n)$, where $F_i=\{T^{(i)}_1,\ldots,T^{(i)}_{n+1-i}\}$ is a forest with vertices labeled by $[n]$. 
\medskip
\isfullwidth{
\begin{mdframed}[style=algorithm]
{\bf Pitman's Coalescent, Version 2.} Consider the directed graph $K_n^{\to}$ with vertices $\{1,\ldots,n\}$ and an oriented edge from $k$ to $\ell$ for each $1 \le k \ne \ell \le n$. Let ${\mathbf{W}}=\{W_{(k,\ell)}: 1 \le k \ne \ell \le n\}$ be independent copies of a continuous random variable $W$, that weight the edges of $K_n^{\to}$. Let $F_1$ be as in Version $1$. For $i \in \{1,\ldots,n-1\}$, form $F_{i+1}$ from $F_i$ by adding the smallest weight edge $(k,\ell)$ whose head $k$ is the root of one of the trees in $F_i$. (Each tree of $F_i$ is rooted at its unique vertex having indegree zero in $F_i$.) 
\end{mdframed}
}
\medskip

Note that in Version 2, for each $i \in \{1,\ldots,n\}$ and each tree $T$ of $F_i$, all edges of $T$ are oriented away from a single vertex of $T$; so, viewing this vertex as the root of $T$, the orientation of edges in $T$ is fully specified by the location of its root. 
\begin{leftbar}
\begin{ex}\label{ex:pitman_cayley}
View the trees of Version 2 as rooted rather than oriented. Then the sequences of forests $(F_1,\ldots,F_n)$ described in Version 1 and Version 2 have the same distribution. 
\end{ex}
\end{leftbar}

Say that a finite set $\{X_i,i \in I\}$ of random variables is {\em exchangeable} if for any two deterministic orderings of $I$ as, say, $i_1,\ldots,i_k$ and $i_1',\ldots,i_i'$, the vectors $(X_{i_1},\ldots,X_{i_k})$ and $(X_{i_1'},\ldots,X_{i_k'})$ are identically distributed. In particular, if the elements of $\{X_i,i \in I\}$ are iid then the set is exchangeable. 
\begin{leftbar}
\begin{ex}\label{ex:pitman_cayley_2}
Suppose that the edge weights $\mathbf{W}$ are only assumed to be exchangeable and a.s.\ pairwise distinct. 
Show that the sequences of forests $(F_1,\ldots,F_n)$ described in Version 1 and Version 2 still have the same distribution. 
\end{ex}
\end{leftbar}

\begin{marginfig}%
\iftoggle{bookversion}{
  \includegraphics[width=1.2\linewidth,angle=90]{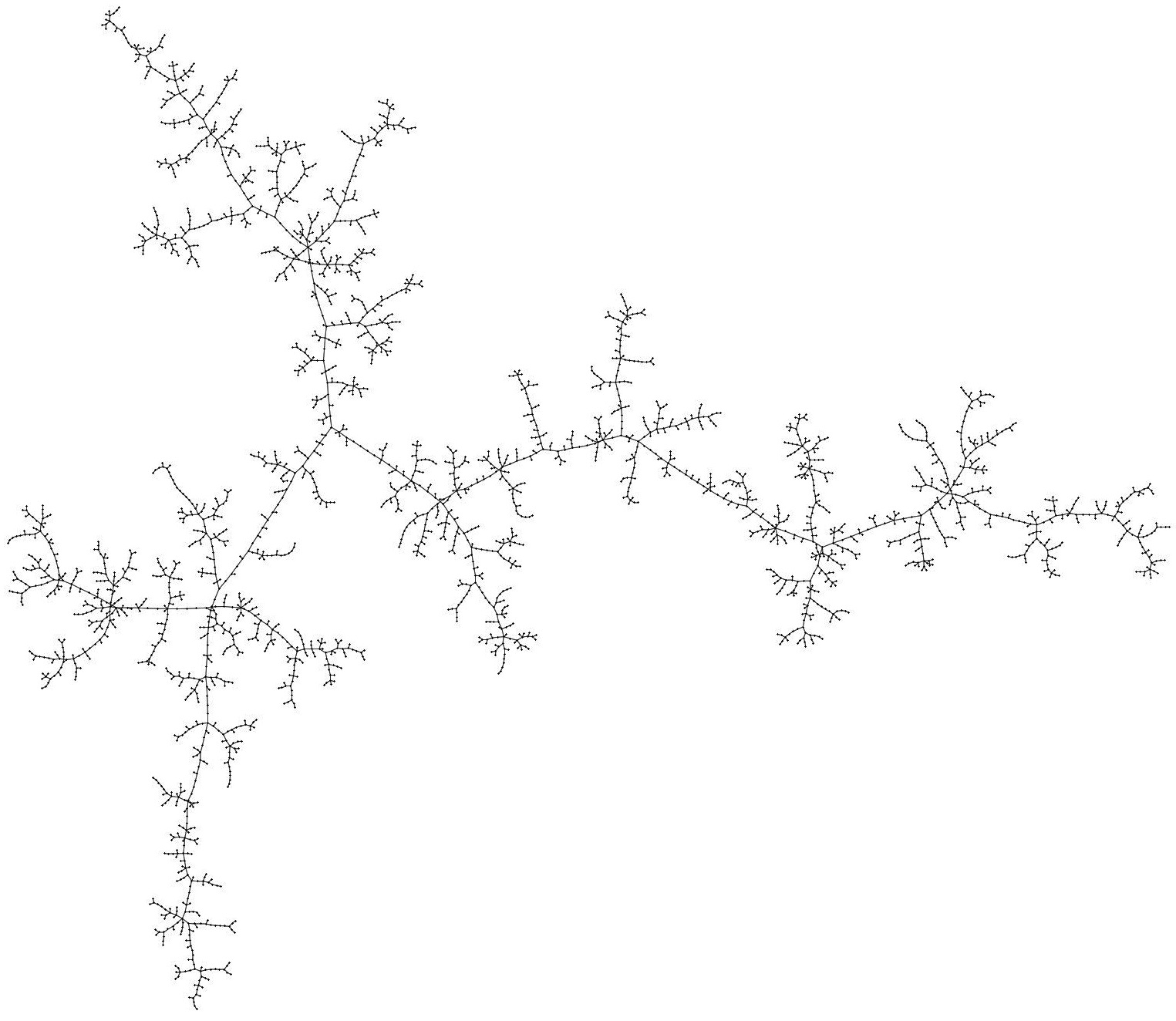}
  }
  {
  \includegraphics[width=0.7\linewidth,angle=90]{ust}
  }
  \caption{One of the $3000^{2998}$ labeled trees with $3000$ vertices, selected uniformly at random.}
  \label{mfig:additive_coal_tree}
\end{marginfig}

To prove Cayley's formula, we compute the {\em partition function} of Pitman's coalescent: this is the total number of possibilities for its execution. (To do so, it's easiest to think about Version 1 of the procedure.) For example, when $n=3$, there are $6$ possibilities for the first step of the process: $3$ choices for the first vertex, then $2$ choices of a tree not containing the first vertex. For the second step, there are $3$ choices for the first vertex; there is only one component not containing the chosen vertex, and we must choose it. Thus, for $n=3$, the partition function has value $Z_{\textsc{ac}}(3)=6\cdot3=18$. More generally, for the $n$-vertex process, when adding the $i$'th edge we have $n$ choices for the first vertex and $n-i$ choices of tree not containing the first vertex, so a total of $n(n-i)$ possibilities. Thus the partition function is 
\begin{equation}\label{eq:additive_coal_partition}
Z_{\textsc{ac}}(n) = \prod_{i=1}^{n-1} n \cdot (n-i) = n^{n-1}(n-1)!
\end{equation}

It is not possible to recover the entire execution path of the additive coalescent from the final tree, since there is no way to tell in which order the edges were added. If we wish to retain this information, we may label each edge of $T_1^{(n)}$ with the step at which it was added. More precisely, $L(e)$ is the unique integer $i \in \{1,\ldots,n-1\}$ such that $e$ is not an edge of $F_i$ but is an edge of $F_{i+1}$. It follows from the definition of the process that the edge labels are distinct, so $L:E(T_1^{(n)}) \to \{1,\ldots,n-1\}$ is a bijective map. 

Now fix a rooted tree $t$ with vertices $\{1,\ldots,n\}$, and consider the {\em restricted} partition function 
$Z_{\textsc{ac},t}(n)$; this is simply the number of possibilities for the execution of the process for which the end result is the tree $t$. We claim that $Z_{\textsc{ac},t}(n)=(n-1)!$.  This is easy to see: for any labelling $\ell$ of the edges of $t$ with integers $\{1,\ldots,n-1\}$, there is a unique execution path for which $(T_1^{(n)},L)=(t,\ell)$, and there are $(n-1)!$ possible labellings $\ell$. 
Thus, the probability of ending with the tree $t$ is $Z_{\textsc{ac},t}(n)/Z_{\textsc{ac}}(n) = 1/n^{n-1}$. Since this number doesn't depend on $t$, only on $n$, it follows that every rooted labelled tree with $n$ vertices is equally likely, and so there must be $n^{n-1}$ such trees. 

\medskip
\isfullwidth{
{
{\bf Note.} {\em The preceding argument is correct, but treads lightly around an important point. When performing the process, the number of possibilities for the $i$'th edge does not depend on the first $i-1$ choices, so the probability of building a particular tree $t$ by adding its edges in in a particular order is $[n^{n-1}(n-1)!]^{-1}$ regardless of the order. Of course, the {\em set} of possible choices at a given step must depend on the history of the process -- for example, we must not add a single edge twice. More generally, thinking of Version 2, applying the procedure to a graph other than $K_n^{\to}$ need not yield a uniform spanning tree of the graph, and indeed may not even build a tree. (Consider, for example, applying the procedure to a two-edge path.)}}
}
\medskip

By stopping Pitman's coalescent before the end, one can use a similar analysis to obtain counting formulae for forests. Write $Z_{\textsc{ac}}(n,k)$ for the total number of possibilities for Pitman's coalescent stopped at step $k$ (so ending with $n+1-k$ forests). We write $(m)_{\ell}$ to denote the {\em falling factorial} $\prod_{i=0}^{\ell-1} (m-i)$. 
\bex \label{ex:ac_partial_partitition}
\begin{itemize}
\item[(a)] Show that $Z_{\textsc{ac}}(n,k) = n^{k-1}(n-1)_{k-1}$ for each for $1 \le k \le n$. 
\item[(b)] An {\em ordered labeled forest} is a sequence $(t_1,\ldots,t_\ell)$ where each $t_i$ is a rooted labelled tree and all labels of vertices in the forest are distinct. Show that for each $1 \le \ell \le n$ the number of ordered labeled forests $(t_1,\ldots,t_\ell)$ with $\bigcup_{i=1}^\ell V(t_i)=[n]$, is $\ell\cdot (n)_\ell \cdot n^{n-\ell-1}$. 
\end{itemize}
\eex

We briefly discuss a special case of Version 2. Suppose that $W_{(k,l)}$ is exponential with rate $X_{(k,\ell)}$, where $\mathbf{X}=\{X_{(k,\ell)}: 1 \le k \ne \ell \le n\}$ are independent copies of any non-negative random variable $X$. By standard properties of exponentials and the symmetry of the process, the dynamics in this case may be described as follows. 

\medskip
\isfullwidth{
\begin{mdframed}[style=algorithm]
{\bf Pitman's Coalescent, Version 3.} Let $F_i$ be as in Version $1$. For $i \in \{1,\ldots,n-1\}$, choose an edge whose head is the root of any one of the trees in $F_i$, each such edge $(k,l)$ chosen with probability proportional to its weight $X_{(k,l)}$; add the chosen edge to create the forest $F_{i+1}$. 
\end{mdframed}
}
\medskip

Consider Version 3 of the procedure after $i-1$ edges have been added. 
Conditional on $\mathbf{X}$ and on the forest $(T_1^{(i)},\ldots,T_{n-i+1}^{(i)})$, the probability of adding a particular edge $(k,\ell)$ whose tail is a root, is proportional to $X_{(k,\ell)}$, so is equal to 
\[ 
\frac{X_{(k,\ell)}}{\sum_{m=1}^{n-i+1}\sum_{j \in \{1,\ldots,n\}\setminus V(T^{(i)}_m)}X_{(r(T_m^{(i)}),j)}}
 \, .
\]
Now fix any sequence $f_1,\ldots,f_n$ of forests that can arise in the process. Write $f_i = (t^{(i)}_{k},1 \le k \le n+1-i)$ and for $i=1,\ldots,n-1$ write $(k_i,\ell_i)$ for the unique edge of $f_{i+1}$ not in $f_{i}$. Then by the above,  
\begin{equation*}
\P{F_i=f_i,1 \le i \le n ~|~ \mathbf{X}} = \prod_{i=1}^{n-1} \frac{X_{(k_i,\ell_i)}}{\sum_{m=1}^{n-i+1}\sum_{j \in \{1,\ldots,n\}\setminus V(t^{(i)}_m)}X_{(r(t_m^{(i)}),j)}}\, .
\end{equation*}
By Exercise~\ref{ex:pitman_cayley} and the above analysis, it follows that for any such sequence $f_1,\ldots,f_n$, 
\[
\E{\prod_{i=1}^{n-1} \frac{X_{(k_i,\ell_i)}}{\sum_{m=1}^{n-i+1}\sum_{j \in \{1,\ldots,n\}\setminus V(t^{(i)}_m)}X_{(r(t_m^{(i)}),j)}}} = \frac{1}{n^{n-1}(n-1)!}\, .
\]
It is by no means obvious at first glance that this expectation should be not depend on law of $X$, let alone that it should have such a simple form.  

\secsub{Kingman's coalescent and random recursive trees}

Pitman's coalescent starts from isolated vertices labeled from $\{1,\ldots,n\}$, and builds a rooted tree by successive edge addition. At each step, an edge is added {\em to} some vertex, {\em from} some root (of a component not containing the chosen vertex). 
\iftoggle{bookversion}{We straightforwardly saw that at the stage when there are $i$ trees, the number of of possibilities for the next edge was $n(i-1)$. (It was important that this quantity depended only on the {\em number} of trees and not, say, their sizes, or some other feature.)}{When we calculated $Z_{\textsc{ac}}(n)$, it was important that the number of possibilities at each step depended only on the {\em number} of trees in the current forest and not, say, their sizes, or some other feature.}

Pitman's merging rule ({\em to} any vertex, {\em from} a root) yielded a beautiful proof of Cayley's formula. It is natural to ask what other rules exist, and what information may be gleaned from them. Of course, {\em from} any vertex, {\em to} a root just yields the additive coalescent, with edges of the resulting tree oriented towards the root rather than towards the leaves. What about {\em from} any root, {\em to} any (other) root, as in the following procedure? In a very slight abuse of terminology, we call this rule {\em Kingman's coalescent}. We again start from a rooted forest $F_1$ of $n$ isolated vertices $\{1,\ldots,n\}$. Recall that we write $F_i=\{T_1^{(i)},\ldots,T_{n+1-i}^{(i)}\}$. 

\medskip
\isfullwidth{
\begin{mdframed}[style=algorithm]
{\bf Kingman's Coalescent.}
At step $i$, choose an ordered pair $(U_i,V_i)$ of distinct roots from $\{r(T_1^{(i)}),\ldots,r(T_{n+1-i}^{(i)}\}$, uniformly at random from among the $(n+1-i)(n-i)$ such pairs. Add an edge from $U_i$ to $V_i$, and root the resulting tree at $U_i$. The forest $F_{i+1}$ consists of this new tree together with the $n-i-1$ unaltered trees from $F_i$. 
\end{mdframed}
}
\medskip

Our convention is that when an edge is added from $u$ to $v$, the root of the resulting tree is $u$; this maintains that edges are always oriented towards the leaves. 
For Kingman's coalescent, when $i$ trees remain there are $i(i-1)$ possibilities for which oriented edge to add. Like for Pitman's coalescent, this number depends only on the number of trees, and it follows that the total number of possible execution paths for the process is 
\begin{equation}\label{eq:mult_coal_partition}
Z_{\textsc{kc}}(n) = 
\prod_{i=2}^n i(i-1) = n!(n-1)!\, .
\end{equation}
What does this number count? 

To answer the preceding question, as in the additive coalescent let $L:E(T_1^{(n)})\to \{1,\ldots,n-1\}$ label the edges of $T_1^{(n)}$ in their order of addition. It is easily seen that for Kingman's coalescent, the edge labels decrease along any root-to-leaf path; we call such a labelling a {\em decreasing edge labelling}.\footnote{It is more common to order by {\em reverse} order of addition, so that labels increase along root-to-leaf paths; this change of perspective may help with Exercise~\ref{ex:random_recursive_trees}.} 
Furthermore, any decreasing edge labelling of $T_1^{(n)}$ can arise. Once again, the full behaviour of the coalescent is described by pair $(T_1^{(n)},L)$, and conversely, the coalescent determines $T_1^{(n)}$ and $L$. These observations yield that the number of rooted trees with vertices labelled $\{1,\ldots,n\}$, additionally equipped with a decreasing edge labelling, is $n!(n-1)!$. The factor $n!$ simply counts the number of ways to assign the labels $\{1,\ldots,n\}$ to the vertices. By symmetry, each vertex labelling of a given tree is equally likely to arise, and so we have the following. 

\begin{prop}\label{prop:increasing_tree_count}
The number of pairs $(T,L)$, where $T$ is a rooted tree with $n$ vertices and $L$ is a decreasing edge labelling of $T$, is $(n-1)!$. 
\end{prop}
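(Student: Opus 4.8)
The plan is to obtain the count by ``dividing out'' the vertex labels from the identity $Z_{\textsc{kc}}(n) = n!(n-1)!$ established above. Recall that running the forest-valued Kingman's coalescent to completion produces the pair $(T_1^{(n)},L)$, where $T_1^{(n)}$ is a rooted tree with vertices labelled by $[n]$ and $L$ is a decreasing edge labelling; that every such pair $(t,\ell)$ arises from exactly one execution path; and hence that the number of pairs $(t,\ell)$ with $t$ a rooted tree on vertex set $[n]$ and $\ell$ a decreasing edge labelling of $t$ is $Z_{\textsc{kc}}(n) = n!(n-1)!$. So it remains only to compare this with the number of pairs $(T,L)$ counted \emph{up to isomorphism}, i.e.\ with the vertices of $T$ left unlabelled.

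The key step is to show that every pair $(T,L)$ as in the statement is \emph{rigid}: its automorphism group is trivial. Indeed, any automorphism $\phi$ of $(T,L)$ must fix the root of $T$, and it must carry each edge to an edge bearing the same $L$-value; since $L:E(T)\to\{1,\ldots,n-1\}$ is injective, $\phi$ therefore fixes every edge of $T$ setwise. As $T$ is connected and $\phi$ fixes the root, a straightforward induction on the graph distance from the root shows $\phi$ fixes every vertex, so $\phi=\mathrm{id}$. Consequently, the $n!$ bijections from $V(T)$ to $[n]$ yield $n!$ pairwise non-isomorphic vertex-labelled structures, and conversely every vertex-labelled pair $(t,\ell)$ arises in this way from its isomorphism class. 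Thus the map ``forget the vertex labels'' from vertex-labelled pairs to isomorphism classes of pairs is exactly $n!$-to-one. Writing $N$ for the number of isomorphism classes of pairs $(T,L)$, we get $n!\cdot N = n!(n-1)!$, hence $N=(n-1)!$.

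I do not expect a serious obstacle; the only genuinely delicate points are the rigidity claim and the bookkeeping of what is being counted (isomorphism classes versus vertex-labelled trees). As an alternative one could give a self-contained recursive proof after switching to the (reverse, increasing) convention of the footnote: the edge receiving the largest label $n-1$ must be a pendant edge, since otherwise its child edge would carry a larger label; deleting this edge and its leaf gives a bijection between $n$-vertex pairs and $(n-1)$-vertex pairs together with a choice of one of the $n-1$ attachment vertices, yielding $N(n)=(n-1)N(n-1)$ with $N(1)=1$. But the symmetry argument above is shorter and is the one indicated by the surrounding text.
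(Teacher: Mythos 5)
Your proof is correct and follows essentially the same route as the paper: both start from $Z_{\textsc{kc}}(n)=n!(n-1)!$ as the count of vertex-labelled pairs $(t,\ell)$, and then divide out the $n!$ vertex labellings. You are in fact somewhat more careful than the paper, which dismisses the quotient step with a loose ``by symmetry''; your explicit rigidity argument (that the injectivity of $L$ kills all automorphisms of $(T,L)$) is exactly what is needed to justify that each unlabelled shape corresponds to precisely $n!$ labelled pairs, and your alternative recursion $N(n)=(n-1)N(n-1)$ matches the paper's suggested Exercise.
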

\bex[Random recursive trees] \label{ex:random_recursive_trees}
Prove Proposition~\ref{prop:increasing_tree_count} by introducing and analyzing an $n$-step procedure that at step $i$ consists of a rooted tree with $i$ vertices.
\eex
\begin{marginfig}%
\iftoggle{bookversion}{
  \includegraphics[width=\linewidth,angle=90]{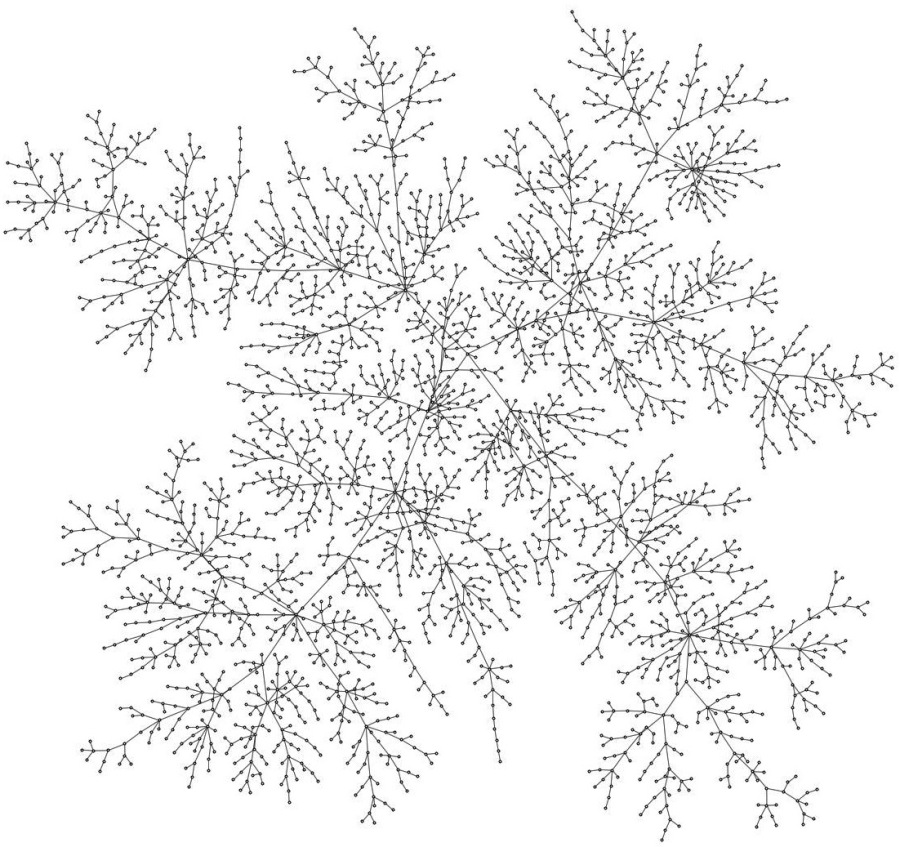}
  }
  {
    \includegraphics[width=0.6\linewidth,angle=90]{randomrecursivetree}
    }
  \caption{One of the $2999!$ rooted trees on $3000$ vertices with a decreasing edge labelling (labels suppressed).}
  \label{mfig:kingman_coal_tree}
\end{marginfig}
Before the next exercise, we state a few definitions. For a graph $G$, write $|G|$ for the number of vertices of $G$. 
If $T$ is a rooted tree and $u$ is a vertex of $T$, write $T_u$ for the subtree of $T$ consisting of $u$ together with its descendants in $T$ (we call $T_u$ the subtree of $T$ rooted at $u$). Also, if $u$ is not the root, write $p(u)$ for the parent of $u$ in $T$. 
\bex
Show that for a fixed rooted tree $T$, the number of decreasing edge labellings of $T$ is 
\[
\prod_{v \in V(T)} \frac{(|T_v|-1)!}{\prod_{\{u \in V(T): p(u)=v\}} |T_u|!}\, . 
\]
\eex
Our convention is that an empty product equals $1$; a special case is that $0!=1$. 
It follows from the preceding exercise that, writing $\mathcal{T}_n$ for the set of rooted trees with $n$ vertices, 
\[
\sum_{T \in \mathcal{T}_n} \prod_{v \in V(T)} \frac{|E(T_u)|!}{\prod_{\{u \in V(T): p(u)=v\}} |V(T_u)|!} = (n-1)!\, ; 
\]
again, a formula that is far from obvious at first glance!. 

To finish the section, note that just like for Pitman's coalescent, we might well consider a version of this procedure that is ``driven by'' iid non-negative weights ${\mathbf{X}}=\{X_{(k,\ell)}: 1 \le k \ne \ell \le n\}$. (Recall that we viewed these weights as exponential {\em rates}, then used the resulting exponential clocks at each step to determine which edge to add.) At each step, add an oriented edge whose tail and head are both the roots of some tree of the current forest, each such edge chosen with probability proportional to its weight. For this procedure, conditional on $\mathbf{X}$, after adding the first $i-1$ edges, the conditional probability of adding a particular edge $(k,\ell)$ is
\[
\frac{X_{(k,\ell)}}{\sum_{1 \le j \ne m \le n} X_{(r(T_j^{(i)}),r(T_m^{(i)}))}}\, .
\]
Now fix any sequence $f_1,\ldots,f_n$ of forests that can arise in the process, write $f_i = (t^{(i)}_{k},1 \le k \le n+1-i)$, 
and for $i=1,\ldots,n-1$ write $(k_i,\ell_i)$ for the unique edge of $f_{i+1}$ not in $f_{i}$. Then we have 
\begin{equation*}
\P{F_i=f_i,1 \le i \le n ~|~ \mathbf{X}} = \prod_{i=1}^{n-1} \frac{X_{(k_i,\ell_i)}}{\sum_{1 \le m \ne j \le n} X_{(r(t_m^{(i)}),r(t_j^{(i)}))}}\, .
\end{equation*}
It follows from the above analysis that for any such sequence $f_1,\ldots,f_n$, 
\[
\E{\prod_{i=1}^{n-1} \frac{X_{(k_i,\ell_i)}}{\sum_{1 \le m \ne j \le n} X_{(r(t_m^{(i)}),r(t_j^{(i)}))}}} = \frac{1}{n!(n-1)!}\, .
\]

Once again, it is not even {\em a priori} clear that this expectation should not depend on the law of $X$.
\bex[First-passage percolation]
Develop and analyze a ``Version 3'' variant of the tree growth procedure from Exercise~\ref{ex:random_recursive_trees}, using exponential edge weights. 
\eex
\begin{marginfig}%
\iftoggle{bookversion}{
  \includegraphics[width=1.25\linewidth,angle=270]{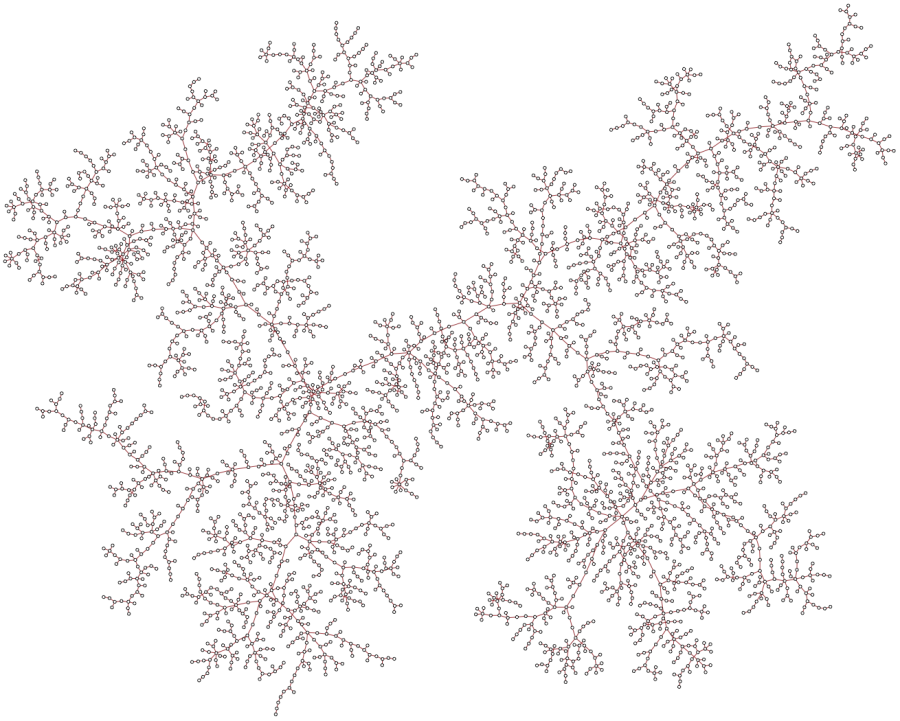}
  }
  {  \includegraphics[width=0.7\linewidth,angle=270]{mst}}
  \caption{The tree resulting from the multiplicative coalescent on $3000$ points.}
  \label{mfig:mult_coal_tree}
\end{marginfig}

\secsub{The multiplicative coalescent and minimum spanning trees}\label{intro:mult}
The previous two sections considered merging rules of the form any-to-root and root-to-root, and obtained Pitman's coalescent and Kingman's coalescent, respectively. We now take up the ``any-to-any'' merging rule. This is arguably the most basic of the three rules, but its behaviour is arguably the hardest to analyze. . We begin as usual from a forest $F_1$ of $n$ isolated vertices $\{1,\ldots,n\}$, and write $F_i=\{T_1^{(i)},\ldots,T_{n+1-i}^{(i)}\}$. 
In the multiplicative coalescent there is no natural way to maintain the property that edges are oriented toward some root vertex, so we view the trees of the forests as unrooted, and their edges as unoriented. 

\medskip
\isfullwidth{
\begin{mdframed}[style=algorithm]
{\bf The multiplicative coalescent.}
To obtain $F_{i+1}$ from $F_i$, choose an pair $\{U_i,V_i\}$ uniformly at random from the set of pairs $\{u,v\} \in {[n] \choose 2}$ for which $u$ and $v$ are different trees of $F_i$. Add an edge from $U_i$ to $V_i$ to form the forest $F_{i+1}$. 
\end{mdframed}
}
\medskip
This is known as the {\em multiplicative} coalescent, because the number of possible choices of an edge joining trees $T_{j}^{(i)}$ and $T_{k}^{(i)}$ is $|T_{j}^{(i)}||T_{k}^{(i)}|$. 
It follows that the number of possible edges that may be added to the forest $F_i$ is 
\[
\sum_{1 \le j \ne k \le n+1-i} |T_{j}^{(i)}||T_{k}^{(i)}| = \frac{1}{2} \left(n^2 - \sum_{T \in F_i} |T|^2\right). 
\]
The above expression is more complicated than for the additive coalescent or Kingman's coalescent: it depends on the forest $F_i$, for one. 

In much of the remainder of these notes, we investigate an expression for the partition function $Z_{\textsc{mc}}(n)$ of the multiplicative coalescent that arises from the preceding formula. 
To obtain this expression, recall the definition of an $n$-chain from Section~\ref{sec:intro}, and that $\mathcal{P}_n$ is the set of $n$-chains.
\bex
Show that $|\mathcal{P}_n| = \frac{(n!)^2}{n\cdot 2^{n-1}}$. 
\eex

The multiplicative coalescent determines an $n$-chain in which the $i$'th partition is simply $P(F_i):=\{V(T_{j}^{(i)}),1 \le j \le n+1-i\}$. It is straightforward to see that the number of possibilities for the multiplicative coalescent that give rise to a particular $n$-chain $P=(P_1,\ldots,P_n)$ is simply 
\[
\prod_{i=1}^{n-1} |A_i(P)||B_i(P)|\, ,
\]
where $A_i(P)$ and $B_i(P)$ are the parts of $P_i$ that are combined in $P_{i+1}$. 
It follows that 
\[
Z_{\textsc{mc}}(n) = \sum_{P=(P_1,\ldots,P_n) \in \mathcal{P}_n} \prod_{i=1}^{n-1} \pran{|A_i(P)||B_i(P)|}\, .
\]
This certainly looks more complicated than in the previous two cases. However, there is an exact formula for $Z_{\textsc{mc}}(n)$ whose derivation is perhaps easier than for either $Z_{\textsc{ac}}(n)$ or $Z_{\textsc{kc}}(n)$ (though it does rely on Cayley's formula). 
\begin{prop}\label{prop:zmc_exact}
$Z_{\textsc{mc}}(n) = n^{n-2}(n-1)!$
\end{prop}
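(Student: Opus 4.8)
The plan is to recognize $Z_{\textsc{mc}}(n)$, which by the displayed identity just above is precisely the total number of ways the multiplicative coalescent may unfold, as the number of pairs $(t,\sigma)$, where $t$ is a labelled tree on vertex set $[n]$ and $\sigma$ is a linear ordering of the $n-1$ edges of $t$; Cayley's formula then finishes the count.

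First I would make the notion of an \emph{execution path} precise: it is a sequence $(\{U_1,V_1\},\dots,\{U_{n-1},V_{n-1}\})$ of unordered pairs such that, with $F_1$ the forest of $n$ isolated vertices and $F_{i+1}=F_i\cup\{\{U_i,V_i\}\}$, the vertices $U_i$ and $V_i$ lie in distinct trees of $F_i$ for every $i$. For a fixed $n$-chain $P=(P_1,\dots,P_n)$ the pairs $\{u,v\}$ with $u$ and $v$ in the two merging parts $A_i(P),B_i(P)$ number exactly $|A_i(P)|\,|B_i(P)|$, so summing over $\mathcal{P}_n$ shows that $Z_{\textsc{mc}}(n)$ equals the number of execution paths.

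Next I would set up the bijection between execution paths and pairs $(t,\sigma)$. Given an execution path, the edges $e_i=\{U_i,V_i\}$ are pairwise distinct (a tree of $F_i$ is never merged with itself), so $F_n$ has $n$ vertices, $n-1$ edges, and is connected, hence is a spanning tree $t$ of $K_n$; the order of addition of the edges is an ordering $\sigma$ of $E(t)$. Conversely, given any labelled tree $t$ on $[n]$ and any ordering $e_1,\dots,e_{n-1}$ of $E(t)$, set $F_1$ to be the isolated vertices and $F_{i+1}=F_i\cup\{e_i\}$; since $t$ is acyclic no $e_i$ closes a cycle, so the endpoints of $e_i$ always lie in distinct trees of $F_i$, and thus $(e_1,\dots,e_{n-1})$ is a legal execution path — the unique one yielding $(t,\sigma)$. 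This is exactly the subtlety flagged in the Note above: the freedom to add the edges in an arbitrary order is legitimate precisely because $t$ is a tree.

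Finally I would count: for a fixed tree $t$ there are $(n-1)!$ orderings of its edges, so the restricted partition function satisfies $Z_{\textsc{mc},t}(n)=(n-1)!$; summing over the $n^{n-2}$ labelled trees on $[n]$ (Cayley's formula) gives $Z_{\textsc{mc}}(n)=n^{n-2}(n-1)!$. There is no serious obstacle here; the only point needing care is the converse half of the bijection — that every ordering of the edges of every spanning tree is actually realizable — which comes down to the elementary fact that assembling a tree one edge at a time never creates a cycle.
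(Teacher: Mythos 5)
Your argument is correct and is essentially the same as the paper's: both set up the bijection between execution paths of the multiplicative coalescent and pairs $(t,\sigma)$ consisting of a labelled tree on $[n]$ together with an ordering of its $n-1$ edges, and then invoke Cayley's formula. You spell out the converse direction (every edge-ordering of every spanning tree is a legal execution path) in a bit more detail than the paper does, but the idea and structure are identical.
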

\begin{proof}
Let $\mathcal{S}$ be the set of pairs $(t,\ell)$ where $t$ is an unrooted tree with $V(t)=[n]$ and $\ell:E(t) \to [n-1]$ is a bijection. By Cayley's formula, the number of trees $t$ with $V(t)=[n]$ is $n^{n-2}$, so $\mathcal{S}=n^{n-2}(n-1)!$. 

For $e \in E(T^{(1)}_n)$, let $L(e) = \sup\{i: e \not\in E(F_i)\}$. Then $L:E(T^{(1)}_n) \to [n-1]$ is a bijection. Thus the pair $(T^{(1)}_n,L)$ is an element of $\mathcal{S}$. To see this map is bijective, note that if $(T^{(1)}_n,L)=(t,\ell)$ then for each $1 \le i \le n$, $F_i$ is the forest on $[n]$ with edges $\{\ell^{-1}(j),1 \le j < i\}$. The result follows. 
\end{proof}
The above proposition yields that $Z_{\textsc{mc}}(n)=Z_{\textsc{ac}}(n)/n$. If we were to additionally choose a root for $T^{(1)}_n$, we would obtain identical partition functions. This suggests that perhaps the additive and multiplicative coalescents have similar structures. One might even be tempted to believe that the trees built by the two coalescents are identically distributed; the following exercise (an observation of Aldous \cite{aldous90random}), will disabuse you of that notation. 
\bex
Let $T$ be built by the multiplicative coalescent, and let $T'$ be obtained from the additive coalescent by unrooting the final tree. Show that if $n \ge 4$ then $T$ and $T'$ are not identically distributed. 
\eex
Despite the preceding exercise, it is tempting to guess that the two trees are still similar in structure; this was conjectured by Aldous \cite{aldous90random}, and only recently disproved \citep{addario13limit}. In the remainder of the section, we begin to argue for the difference between the two coalescents, from the perspective of their partition functions. For $1 \le k \le n$, write $Z_{\textsc{mc}}(n,k)$ for the partition function of the first $k$ steps of the multiplicative coalescent, 
\[
Z_{\textsc{mc}}(n,k) = \sum_{P=(P_1,\ldots,P_k) \in \mathcal{P}_{n,k}} \prod_{i=1}^{k-1} \pran{|A_i(P)||B_i(P)|}, 
\]
where $\mathcal{P}_{n,k}$ is the set of length-$k$ initial segments of $n$-chains. We have, e.g., $Z_{\textsc{mc}}(n,1)=1$, $Z_{\textsc{mc}}(n,2)={n \choose 2}$, and $Z_{\textsc{mc}}(n,n)=Z_{\textsc{mc}}(n)$. 

The argument of Proposition~\ref{prop:zmc_exact} shows that $Z_{\textsc{mc}}(n,k) = u_{n,k}\cdot (k-1)!$, where $u_{n,k}$ is the number of unrooted forests with vertices $[n]$ and $k-1$ total edges. The identity 
\[
u_{n,k} = {n \choose n+1-k} n^{k-2} \sum_{i=0}^{n+1-k}\left(\frac{-1}{2n}\right)^i {n+1-k \choose i}(n+1-k+i) \cdot (k-1)_i, 
\]
was derived by R\'enyi \cite{renyi59remarks}, and I do not know of an exact formula that simplifies the above expression. We begin to see that there is more to the multiplicative coalescent than first meets the eye. 

If we can't have a nice, simple identity, what about bounds? Of course, there is the trivial upper bound $Z_{\textsc{mc}}(n,k) \le (n(n-1)/2)^{k-1}$, since at each step there are at most ${n \choose 2}$ pairs to choose from; similar bounds hold for the other two coalescents. 
To improve this bound, and more generally to develop a deeper understanding of the dynamics of the multiplicative coalescent, our starting point is the following observation. 

Given an $n$-chain $P=(P_1,\ldots,P_n)$, for the multiplicative coalescent we have 
\[
\P{(P(F_i),1 \le i \le n)=P} = \prod_{i=1}^{n-1} \frac{2|A_i(P)||B_i(P)|}{n^2-\sum_{\pi \in P_i} |\pi|^2}\, .
\]
This holds since for $1 \le i \le n-1$, given that $P(F_j)=P_j$ for $1 \le j \le i$, there are $(n^2-\sum_{\pi \in P_i} |\pi|^2)/2$ choices for which oriented edge to add to form $F_{i+1}$, and $P(F_{i+1})=P_{i+1}$ for precisely $|A_i(P)||B_i(P)|$ of these. It follows that 
\begin{align}
Z_{\textsc{mc}}(n) & = \sum_{P=(P_1,\ldots,P_n) \in \mathcal{P}_n} \P{(P(F_i),1 \le i \le n)=P}\cdot \prod_{i=1}^{n-1} \frac{n^2-\sum_{\pi \in P_i} |\pi|^2}{2}\nonumber\\
	& = \sum_{P=(P_1,\ldots,P_n) \in \mathcal{P}_n} \P{(P(F_i),1 \le i \le n)=P}\cdot 2^{-(n-1)} \nonumber\\
	& \qquad\qquad\qquad \cdot 
	\Cexp{\prod_{i=1}^{n-1} \pran{n^2-\sum_{T \in F_i} |T|^2}}{(P(F_i),1 \le i \le n)=P} \nonumber\\
	& = 2^{-(n-1)}\cdot \E{\prod_{i=1}^{n-1}\pran{n^2-\sum_{T \in F_i} |T|^2}}. \label{eq:zmc_partition_expectation}
\end{align}
A mechanical modification of the logic leading to (\ref{eq:zmc_partition_expectation}) yields the following expression, valid for each $1 \le k \le n$: 
\begin{equation}\label{eq:zmc_partition_truncated_expectation}
Z_{\textsc{mc}}(n,k) = 2^{-(k-1)}\E{\prod_{i=1}^{k-1}\pran{n^2-\sum_{T \in F_i} |T|^2}}.
\end{equation}

Write 
\[
\hat{Z}_{\textsc{mc}}^{\to}(n,k) = \prod_{i=1}^{k-1} \pran{n^2-\sum_{T \in F_i} |T|^2}\, ,
\]
let $\hat{Z}_{\textsc{mc}}^{\to}(n) = \hat{Z}_{\textsc{mc}}^{\to}(n,1)$, and let $\hat{Z}_{\textsc{mc}}(n,k)=2^{-(k-1)}\hat{Z}_{\textsc{mc}}^{\to}(n,k)$ and $\hat{Z}_{\textsc{mc}}(n)=\hat{Z}_{\textsc{mc}}(n,n)$. With this notation, (\ref{eq:zmc_partition_expectation}) and the subsequent equation state that 
\begin{equation}\label{eq:zmc_partition_expectation_rewrite}
\E{\hat{Z}_{\textsc{mc}}(n,k)} = Z_{\textsc{mc}}(n,k)= \frac{1}{2^{k-1}}\E{\hat{Z}_{\textsc{mc}}^{\to}(n,k)}\, .
\end{equation}
The random variable $\hat{Z}_{\textsc{mc}}(n)$ is a sort of {\em empirical partition function} of the multiplicative coalescent. The superscript arrow on $\hat{Z}^{\to}_{\textsc{mc}}(n,k)$ is because the factor $2^{k-1}$ may be viewed as corresponding to a choice of orientation for each edge of $F_k$. The random variable $\hat{Z}_{\textsc{mc}}(n)$ of course contains more information than simply its expected value, so by studying it we might hope to gain a greater insight into the behaviour of the coalescent. Much of the remainder of these notes is devoted to showing that $\E{\hat{Z}_{\textsc{mc}}(n)}=Z_{\textsc{mc}}(n) $ is a {\em terrible} predictor of the typical value of $\hat{Z}_{\textsc{mc}}(n)$. More precisely, there are unlikely execution paths along which the multiplicative coalescent has many more possibilities than along a typical path; such paths swell the {\em expected} value of $\hat{Z}_{\textsc{mc}}(n)$ to exponentially larger than its {\em typical} size. 

The logic leading to (\ref{eq:zmc_partition_expectation}) and (\ref{eq:zmc_partition_truncated_expectation}) may also be applied to the additive coalescent; the result is boring but instructive. First note that 
\[
Z_{\textsc{mc}}(n,k) = \sum_{P=(P_1,\ldots,P_k) \in \mathcal{P}_{n,k}} \prod_{i=1}^{k-1} \pran{|A_i(P)|+|B_i(P)|}. 
\]
For the additive coalescent, the total number of choices at step $i$ is $n(n-i)$, and given that $P(F_i)=P_i$, the number of choices which yield $P(F_{i+1})=P_{i+1}$ is $A_i(P)+B_i(P)$. writing $\mathbf{P}_{\textsc{ac}}$ for probabilities under the additive coalescent, we thus have 
\[
\mathbf{P}_{\textsc{ac}}\left\{(P(F_i),1 \le i \le k)-(P_1,\ldots,P_k)\right\} = \prod_{i=1}^{k-1} \frac{|A_i(P)|+|B_i(P)|}{n(n-i)}
\]
Following the logic through yields 
\[
Z_{\textsc{ac}}(n,k) = \mathbf{E}_{\textsc{ac}}\left[\prod_{i=1}^{k-1} n(n-i)\right] = \mathbf{E}_{\textsc{ac}}\left[n^{k-1}(n-1)_{k-1}\right]. 
\]
Thus, the ``empirical partition function'' of the additive coalescent is a constant, so contains no information beyond its expected value. (This fact is essentially the key to Pitman's proof of Cayley's formula.) 

The terms of the products (\ref{eq:zmc_partition_expectation}) and (\ref{eq:zmc_partition_truncated_expectation}), though random, turn out to behave in a very regular manner (but proving this will take some work). Through a study of these terms, we will obtain control of $\E{\log\hat{Z}_{\textsc{mc}}(n)}$, and thereby justify the above assertion that $\hat{Z}_{\textsc{mc}}(n)$ is typically very different from its mean. 
\subpar{The growth rate of $Z_{\textsc{mc}}(n,\lfloor n/2\rfloor)$} \label{sec:ercoal}
As a warmup, and to introduce a key tool, we approximate the value of $Z_{\textsc{mc}}(n,\lfloor n/2\rfloor)$ 
using a connection between the multiplicative coalescent and a process we call (once again with a very slight abuse of terminology) the Erd\H{o}s-R\'enyi coalescent. Write $K_n$ for the {\em complete graph}, i.e. the graph with vertices $[n]$ and edges $(\{i,j\}, 1 \le i < j \le n)$. 

\medskip
\isfullwidth{
\begin{mdframed}[style=algorithm]
{\bf The Erd\H{o}s-R\'enyi coalescent.}
Choose a uniformly random permutation $e_1,\ldots,e_{n \choose 2}$ of $E(K_n)$. For $0 \le i \le {n \choose 2}$, let $G^{(n)}_i$ have vertices $[n]$ and edges $\{e_1,\ldots,e_i\}$. 
\end{mdframed}
}
\medskip

Our indexing here starts at zero, unlike in the multiplicative coalescent; this is slightly unfortunate, but it is standard for the Erd\H{o}s-R\'enyi graph process to index so that $G^{(n)}_i$ has $i$ edges. 
This process is different from the previous coalescent processes, most notably because it creates graphs with cycles. 

Note that we can recover the multiplicative coalescent from the Erd\H{o}s-R\'enyi coalescent in the following way. Informally, simply ignore any edges added by the Erd\H{o}s-R\'enyi coalescent that fail to join distinct components. More precisely, for each $0 \le m \le {n \choose 2}$, let $\tau_m$ be the number of edges 
$\{U_i,V_i\}$, $0 < i \le m$ such that $U_i$ and $V_i$ lie in different components of $G^{(n)}_{i-1}$. (See Figure~\ref{fig:mc_coal} for an example.) 
\begin{figure}[ht]
\begin{subfigure}[b]{0.17\textwidth}
		\includegraphics[width=\textwidth,page=1]{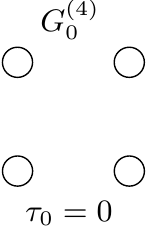}
\end{subfigure}%
\quad
\begin{subfigure}[b]{0.17\textwidth}
		\includegraphics[width=\textwidth,page=2]{graphics/gnpex.pdf}
\end{subfigure}%
\quad
\begin{subfigure}[b]{0.17\textwidth}
		\includegraphics[width=\textwidth,page=3]{graphics/gnpex.pdf}
\end{subfigure}%
\quad
\begin{subfigure}[b]{0.17\textwidth}
		\includegraphics[width=\textwidth,page=4]{graphics/gnpex.pdf}
\end{subfigure}%
\quad
\begin{subfigure}[b]{0.17\textwidth}
		\includegraphics[width=\textwidth,page=5]{graphics/gnpex.pdf}
\end{subfigure}%
\caption{An example of the first steps of the Erd\H{o}s-R\'enyi coalescent. The multiplicative coalescent is obtained by keeping only the thicker, blue edges.} 
\label{fig:mc_coal}
\end{figure}
Observe that 
\[
\tau_m+1=	\begin{cases}
			\tau_m	&\mbox{ if }G^{(n)}_{m+1}\mbox{ and }G^{(n)}_m\mbox{ have the same number of components} \\
			\tau_m+1	&\mbox{ if } G^{(n)}_{m+1}\mbox{ has one fewer component than }G^{(n)}_m\, .
		\end{cases}
\]
In other words, $\tau_m$ increases precisely when the the endpoints of the edge added to $G^{(n)}_m$ are in different components. Further, the set 
\[
\left\{e_{m}: m \ge 1,\tau_m>\tau_{m-1}\right\}
\] 
contains $n-1$ edges, since $G^{(n)}_0$ has $n$ components and
 $G^{(n)}_{{n \choose 2}}$ almost surely has only one component. 

Set $I_1=0$ and for $1 < k \le n$ let 
\[
I_k = \inf\{m \ge 1: \tau_m=k-1\}\, .
\] 
Then for $1 < k \le n$, the edge $e_{I_k}$ joins distinct components of $G^{(n)}_{I_k-1}$, and by symmetry is equally likely to be any such edge. Thus, letting $F_k$ be the graph with edges $\{e_{I_j}: 1\le j\le k\}$ for $1 \le k \le n$, the process $\{F_k,1 \le k \le n\}$ is precisely distributed as the multiplicative coalescent. This is a {\em coupling} between the Erd\H{o}s-R\'enyi graph process and the multiplicative coalescent; its key property is that for all $1 \le k \le n$, the vertex sets of the trees of $F_k$ are the same as those of the components of $G^{(n)}_{I_k}$. 

Having found the multiplicative coalescent within the Erd\H{o}s-R\'enyi coalescent, we can now use known results about the latter process to study the former. For a graph $G$, and $v \in V(G)$, we write $N(v)=N_G(v)$ for the set of nodes adjacent to $v$, and write $C(v)=C_G(v)$ for the connected component of $G$ containing $v$. 
We will use the results of the following exercise.\footnote{Until further notice, we omit ceilings and floors for readability.} 
\bsex\label{ex:comp_sizes_errg_weak}
\begin{itemize}
\item[(a)] Show that in the Erd\H{o}s-R\'enyi coalescent, if all components have size at most $s$ then the probability a uniformly random edge from among the remaining edges has both endpoints in the same component is at most $(s-1)/(n-3)$.
\item[(b)] Show that for all $0 \le m \le n/2$, in $G^{(n)}_k$, $\mathbb{E}|N(v)| \le 2m/n$. 
\item[(c)] Prove by induction that for all $0 \le m < n/2$, in $G^{(n)}_m$, $\E{|C(1)|} \le n/(n-2m)$. \\({\bf Hint.} First condition on $N(1)$, then average.)
\item[(d)] Prove that for all $\eps > 0$, 
\[\limsup_{n \to \infty} \mathbb{P}\left(G^{(n)}_{(1-\eps)n/2}\mbox{ has a component of size }>\eps n\right) \to 0\, .\]
({\bf Hint.} Given that the largest component of $G^{(n)}_m$ has size $s$, with probability at least $s/n$ vertex $1$ is in such a component.) 
\end{itemize}
\esex
Using the above exercise, we now fairly easily prove a lower bound on the partition function of the first half of the multiplicative coalescent. 
\begin{prop}\label{prop:mult_coal_lower_bound_weak}
For all $\beta > 0$, 
\[
\p{\hat{Z}_{\textsc{mc}}^{\to}(n,\lfloor n/2\rfloor) \ge n^{(1-\beta)n}} \to 1\, \mbox{ as } n \to \infty\, .
\]
\end{prop}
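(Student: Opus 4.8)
Recall that $\hat Z^{\to}_{\textsc{mc}}(n,k)=\prod_{i=1}^{k-1}\bigl(n^2-\sum_{T\in F_i}|T|^2\bigr)$, so it suffices to show that each of the $\lfloor n/2\rfloor-1$ factors is not much smaller than $n^2$: then the product is at least $n^{(2-o(1))(n/2)}=n^{(1-o(1))n}$, which exceeds $n^{(1-\beta)n}$ once $n$ is large in terms of $\beta$ (here one uses only that $2^{n/2}=n^{o(n)}$).

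First I would observe that the required bound is in fact deterministic. Since $F_i$ consists of $n+1-i$ trees whose vertex sets partition $[n]$, the sum $\sum_{T\in F_i}|T|^2$ is maximised by taking one tree of size $i$ and $n-i$ singletons, so $\sum_{T\in F_i}|T|^2\le i^2+(n-i)$. Hence for every $i\le\lfloor n/2\rfloor-1$ we have $n^2-\sum_{T\in F_i}|T|^2\ge n^2-i^2-n\ge\tfrac34 n^2-n\ge\tfrac12 n^2$ once $n\ge 4$. Therefore $\hat Z^{\to}_{\textsc{mc}}(n,\lfloor n/2\rfloor)\ge(n^2/2)^{\lfloor n/2\rfloor-1}$ with probability $1$, and taking $\log_n$ gives $(\lfloor n/2\rfloor-1)\bigl(2-\tfrac{\log 2}{\log n}\bigr)\ge n-o(n)\ge(1-\beta)n$ for all large $n$. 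From this angle there is essentially no obstacle: the only ``work'' is the elementary extremal estimate for $\sum_{T\in F_i}|T|^2$ together with $2^{n/2}=n^{o(n)}$.

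The reason this warmup is phrased probabilistically is that it is meant to rehearse the Erd\H{o}s--R\'enyi coupling, and a stronger, more informative conclusion follows from Exercise~\ref{ex:comp_sizes_errg_weak}: by the coupling the trees of $F_i$ are precisely the components of $G^{(n)}_{I_i}$, and component sizes only increase along the Erd\H{o}s--R\'enyi process, so part~(d) applied at the single time $(1-\eps)n/2$ shows that with probability tending to $1$ every component of $G^{(n)}_m$ has size at most $\eps n$ for \emph{all} $m\le(1-\eps)n/2$ simultaneously. Combined with parts~(a)--(c), which control $\tau_m$ and hence guarantee $I_i\le(1-\eps)n/2$ for all $i$ up to roughly $(1-\eps)n/2$, this gives $\sum_{T\in F_i}|T|^2\le(\max_{T}|T|)\cdot n\le\eps n^2$ for those $i$, while for the remaining $i<\lfloor n/2\rfloor$ one falls back on $n^2-\sum_{T\in F_i}|T|^2\ge\tfrac12 n^2$ as above. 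Thus with high probability $\hat Z^{\to}_{\textsc{mc}}(n,\lfloor n/2\rfloor)\ge\bigl((1-\eps)n^2\bigr)^{(1-O(\eps))n/2}\bigl(n^2/2\bigr)^{O(\eps)n/2}$, which is at least $n^{(1-\beta)n}$ once $\eps$ is chosen small enough in terms of $\beta$. In this probabilistic route the only mildly delicate point is the bookkeeping relating the multiplicative-coalescent clock $i$ to the Erd\H{o}s--R\'enyi clock $m$ via $\tau$ and $I$; the component-size control itself is immediate from monotonicity once part~(d) is in hand.
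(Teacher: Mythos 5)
Your first, deterministic argument is correct and is a genuine simplification of the paper's proof. The paper phrases Proposition~\ref{prop:mult_coal_lower_bound_weak} probabilistically and deduces it from Lemma~\ref{lem:firsthalf_nocycles} and Exercise~\ref{ex:comp_sizes_errg_weak}~(d) via the Erd\H{o}s--R\'enyi coupling, which yields the sharper per-factor estimate $n^2-\sum_{T\in F_i}|T|^2\ge(1-\delta)n^2$ for $i$ up to $\approx n/2$ with $\delta$ arbitrary. Your observation — that with $n+1-i$ parts partitioning $[n]$ the extremal configuration is one block of size $i$ and $n-i$ singletons, whence $\sum_{T\in F_i}|T|^2\le i^2+(n-i)\le n^2/4+n$ deterministically — gives the weaker bound $n^2/2$ per factor, but that still yields exponent $n(1-O(1/\log n))\ge(1-\beta)n$, which is all the proposition asserts (indeed all that Corollary~\ref{cor:mult_coal_lower_bound_weak} needs from it). What your route does not buy is the rehearsal of the Erd\H{o}s--R\'enyi coupling, $\tau$, and $I_k$; the paper states explicitly that this proposition is ``a warmup, and to introduce a key tool,'' and the real payoff of the coupling comes in Proposition~\ref{prop:empirical_mc_identity} and Theorem~\ref{thm:susc_conc}. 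Your second, probabilistic sketch is essentially the paper's argument (Lemma~\ref{lem:firsthalf_nocycles} via parts (a) and (d) of Exercise~\ref{ex:comp_sizes_errg_weak}, plus monotonicity of component sizes along the Erd\H{o}s--R\'enyi clock), correctly identified, with the minor caveat that the paper's control of $\tau_m$ uses only (a) and (d) together with Chebyshev rather than (a)--(c).
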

We begin by showing that typically $I_t = (1+o(1))t$ until $t \ge n/2$. 
\begin{lem}\label{lem:firsthalf_nocycles}
For all $\eps > 0$, $\limsup_{n \to \infty} \mathbb{P}\left(I_{(1-\eps)n/2} \ge n/2\right) =0$.
\end{lem}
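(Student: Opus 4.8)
The plan is to recast the event in terms of the number of \emph{wasted} edges of the Erd\H{o}s-R\'enyi coalescent---those $e_m$ whose two endpoints already lie in a common component of $G^{(n)}_{m-1}$---and then to show by a crude first-moment estimate that there are far too few wasted edges among the first $\approx n/2$ steps for $\tau$ to lag $\eps n/2$ behind. Fix $\eps>0$, choose any $\delta\in(0,\eps)$, and set $M=\lfloor(1-\delta)n/2\rfloor$. For $1\le m\le {n\choose 2}$ let $D_m$ be the indicator that the endpoints of $e_m$ lie in a common component of $G^{(n)}_{m-1}$, so that $\tau_m=m-\sum_{i=1}^m D_i$. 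Since $\tau_\cdot$ increases by at most $1$ per step and starts at $0$, on the event $\{I_{(1-\eps)n/2}\ge n/2\}$ one has $\tau_m\le (1-\eps)n/2-2$ for every integer $m<n/2$; applying this at an $m$ within $O(1)$ of $n/2$ and discarding the at most $\delta n/2+O(1)$ steps between $M$ and $n/2$ (each of which wastes at most one edge), this event is contained in $\{\sum_{m=1}^M D_m\ge (\eps-\delta)n/2\}$. As the footnote permits, I ignore throughout the harmless integer roundings.

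Next I would bound $\E{\sum_{m=1}^M D_m}$. Given $G^{(n)}_{m-1}$, the edge $e_m$ is uniform over the ${n\choose 2}-(m-1)\ge n^2/4$ as-yet-unused pairs (valid for $m\le M<n/2$ and $n$ large), and the number of unused pairs with both endpoints in one component of $G^{(n)}_{m-1}$ is at most $\sum_{C}{|C|\choose 2}=\tfrac12\sum_v(|C(v)|-1)$, the first sum over components $C$. Since $m-1<M<n/2$, Exercise~\ref{ex:comp_sizes_errg_weak}(c) gives $\E{|C(1)|}\le n/(n-2(m-1))\le 1/\delta$ in $G^{(n)}_{m-1}$, so by exchangeability of the vertices $\E{\sum_C{|C|\choose 2}}\le \tfrac{n}{2}(1/\delta-1)$, hence $\E{D_m}\le \tfrac{4}{n^2}\cdot\tfrac{n}{2}(1/\delta-1)=\tfrac{2(1-\delta)}{\delta n}$. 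Summing over $m\le M\le (1-\delta)n/2$ yields $\E{\sum_{m=1}^M D_m}\le (1-\delta)^2/\delta$, a constant independent of $n$. Markov's inequality then gives
\[
\p{I_{(1-\eps)n/2}\ge n/2}\;\le\;\p{\sum_{m=1}^M D_m\ge \frac{(\eps-\delta)n}{2}}\;\le\;\frac{2(1-\delta)^2}{\delta(\eps-\delta)\,n}\;\longrightarrow\;0
\]
as $n\to\infty$, which is the claim (in fact for each fixed small $\delta$).

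The one point that needs a little care---the ``main obstacle,'' such as it is---is that the component-size bound of Exercise~\ref{ex:comp_sizes_errg_weak}(c) degenerates as the edge count approaches $n/2$, which is precisely why the first-moment computation is stopped at $M=(1-\delta)n/2$ and the final $\Theta(\delta n)$ steps are absorbed into the trivial estimate ``at most one wasted edge per step.'' An alternative route to the same conclusion uses Exercise~\ref{ex:comp_sizes_errg_weak}(a) together with (d): on the (whp) event that $G^{(n)}_{(1-\delta)n/2}$ has all components of size at most $\delta n$, part (a) bounds the conditional probability of a wasted edge at each of the first $M$ steps by roughly $\delta$, giving $\E{\#\{\text{wasted among first }M\text{ steps}\}}\lesssim \delta n$, and one then lets $\delta\downarrow 0$; this version does not even need Markov's inequality to be quantitatively sharp, only the freedom to take $\delta$ small.
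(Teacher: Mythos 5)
Your main argument is correct and takes a genuinely different route from the paper. The paper proves the lemma via stochastic domination: on the high-probability event $E$ that all components of $G^{(n)}_{(1-\delta)n/2}$ have size at most $\delta n$ (which comes from Exercise~\ref{ex:comp_sizes_errg_weak}(d)), it uses part~(a) of that exercise to argue that the increments $\tau_{m+1}-\tau_m$ stochastically dominate iid $\mathrm{Bernoulli}(1-\eps/2)$ variables, and then applies Chebyshev to the resulting binomial. Your route instead goes straight through a first-moment estimate: you identify $m-\tau_m=\sum_{i\le m} D_i$ as the count of wasted edges, bound $\E{D_m}$ using only Exercise~\ref{ex:comp_sizes_errg_weak}(c) together with vertex-exchangeability (the identity $\sum_C\binom{|C|}{2}=\tfrac12\sum_v(|C(v)|-1)$), and finish with Markov. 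This sidesteps the conditional stochastic domination step entirely, needs only part~(c) of the exercise rather than~(a) and~(d), and yields a clean explicit $O(1/n)$ rate for each fixed $\delta$. The ``alternative route'' you sketch at the end is essentially the paper's proof. The two arguments are of comparable length; yours is arguably a bit more self-contained, while the paper's stochastic-domination phrasing is perhaps closer in spirit to how one would prove the quantitatively sharper concentration statements later in the paper.

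One small presentational point: in the containment $\{I_{(1-\eps)n/2}\ge n/2\}\subseteq\{\sum_{m=1}^M D_m\ge(\eps-\delta)n/2\}$, it is worth stating explicitly that you are evaluating $\tau$ at $m=\lfloor n/2\rfloor-1$ and using $\sum_{m=1}^{M}D_m\ge\sum_{m=1}^{\lfloor n/2\rfloor-1}D_m-(\lfloor n/2\rfloor-1-M)$; as written, ``applying this at an $m$ within $O(1)$ of $n/2$'' is a touch terse, though perfectly salvageable.
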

\begin{proof}
Fix $\eps > 0$, let $\delta=\eps/3$, and let $E$ be the event that all components of $G_{n(1-\delta)n/2}$ have size at most $\delta n$. 
For $m \ge 0$, conditional on $G^{(n)}_m$, by Exercise~\ref{ex:comp_sizes_errg_weak}~(a), $\tau_{m+1}-\tau_m$ stochastically dominates a Bernoulli$(1-(s-1)/(n-3))$ random variable, where $s$ is the largest component of $G^{(n)}_m$. 

For $n$ large and $s \le \delta n$ we have $1-(s-1)/(n-3) \ge 1-\eps/2$. Therefore, on $E$ and for large $n$ the sequence $(\tau_{m+1}-\tau_m,0 \le m < (1-\delta)n/2)$ stochastically dominates a sequence $(B_m,0 \le m < (1-\delta)n/2)$ of iid Bernoulli$(1-\eps/2)$ random variables. It follows that 
\begin{align*}
\p{\tau_{(1-\delta)n/2 \le (1-\eps)n/2}} 	& \le \p{E_n} + \p{\tau_{(1-\delta)n/2 \le (1-\eps)n/2}~|~E_n^c} \\
									& \le \p{E_n} + \p{\mathrm{Bin}((1-\delta)n/2,1-\eps/2) < (1-\eps)n/2} \\
									& = o(1)\, ,
\end{align*}
the last line Exercise~\ref{ex:comp_sizes_errg_weak}~(d) and Chebyshev's inequality (note that $(1-\delta)(1-\eps/2)n/2 > (1-5\eps/6)n/2$). 
On the other hand, if $\tau_{(1-\delta)n/2} > (1-\eps)n/2$ then $I_{(1-\eps)n/2} \le (1-\delta)n/2 < n/2$. 
\end{proof}
\begin{proof}[Proof of Proposition~\ref{prop:mult_coal_lower_bound_weak}]
View $(F_1,\ldots,F_n)$ as coupled with the by the Erd\H{o}s-R\'enyi coalescent as above, so that $F_k$ and $G_{I_k}^{(n)}$ have the same components. 
Fix $\delta \in (0,1/4)$ and let $k=k(n)= n/2-2\delta n$. Let $E_1$ be the event that $I_{n/2-\delta n} < n/2$.\footnote{We omit the dependence on $n$ in the notation for $E_1$; similar infractions occur later in the proof.} Since $I_{m+1} \ge I_m + 1$ for all $m$, we have 
\[
I_k \le I_{n/2-\delta n} - \left((n/2-\delta n)-k\right) = I_{n/2-\delta n}-\delta n\, .
\] 
Thus, on $E_1$ we have $I_k \le (1-2\delta)n/2$. 

Next let $E_2$ be the event that all component sizes in $G^{(n)}_{(1-2\delta)n/2}$ are at most $\delta n$. 
The components of $F_k$ are precisely the components of $G^{(n)}_{I_k}$, so if $E_1\cap E_2$ occurs then since on $E_1$ we have $I_k \leq (1-2\delta)n/2$, all components of $F_k$ have size at most $\delta n$. In this case, for all $i \le k$ the components of $F_i$ clearly also have size at most $\delta n$. 

It follows\footnote{To maximize $\sum_{j} x_j^2$ subject to the conditions that $\sum_j x_j=1$ and that $\max_j x_j \le \delta$, take $x_j=\delta$ for $1\le j \le \delta^{-1}$.} that on $E_1 \cap E_2$, for all $i \le k$, 
\[
\sum_{T \in F_i} |T|^2 \le \delta n^2 \, 
\]
so on $E_1 \cap E_2$, 
\begin{align}
\hat{Z}^{\to}_{\textsc{mc}}(n,k+1) & = \prod_{i=1}^k \left(n^2 - \sum_{T \in F_i} |T|^2\right) \nonumber\\
						& \ge n^{2k}(1-\delta)^k  \label{eq:mult_coal_lower_bound_weak}\\
						& = n^{n-(4\delta -\log(1-\delta)) n} \nonumber
\end{align}
By Exercise~\ref{ex:comp_sizes_errg_weak}~(d) and Lemma~\ref{lem:firsthalf_nocycles}, $\mathbb{P}\left(E_1 \cap E_2\right) \to 1$ as $n \to \infty$. 
Since $\hat{Z}^{\to}_{\textsc{mc}}(n,\lfloor n/2\rfloor) \ge \hat{Z}^{\to}_{\textsc{mc}}(n,k+1)$ for $n$ large, the result follows. 
\end{proof}
The following exercise is to test whether you are awake. 
\bex \label{ex:zn_half_log_asymptotic}
Prove that 
\[
\frac{\log Z_{\textsc{mc}}(n,\lfloor n/2\rfloor)}{n\log n} \to 1\, ,
\]
as $n \to \infty$. 
\eex
We next use Proposition~\ref{prop:mult_coal_lower_bound_weak} (more precisely, the inequality (\ref{eq:mult_coal_lower_bound_weak}) obtained in the course of its proof) to obtain a first lower bound on $Z_{\textsc{mc}}(n)$. 
\begin{cor}\label{cor:mult_coal_lower_bound_weak}
It holds that 
\[
\frac{Z_{\textsc{mc}}(n,\lfloor n/2\rfloor)}{Z_{\textsc{ac}}(n,\lfloor n/2\rfloor)} = \pran{\frac{e}{4}}^{(1+o(1))n/2}\, .
\]
\end{cor}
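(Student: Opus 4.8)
\emph{Proof proposal.} The plan is to derive the two-sided estimate
\[
\log Z_{\textsc{mc}}(n,\lfloor n/2\rfloor) = n\log n - \tfrac{n}{2}\log 2 + o(n)
\]
and to compare it with an exact evaluation of $Z_{\textsc{ac}}(n,\lfloor n/2\rfloor)$. By Exercise~\ref{ex:ac_partial_partitition}~(a), $Z_{\textsc{ac}}(n,\lfloor n/2\rfloor) = n^{\lfloor n/2\rfloor-1}(n-1)_{\lfloor n/2\rfloor-1} = n^{\lfloor n/2\rfloor-1}\,(n-1)!/(\lceil n/2\rceil)!$, so Stirling's formula gives $\log Z_{\textsc{ac}}(n,\lfloor n/2\rfloor) = n\log n + \tfrac{n}{2}(\log 2 - 1) + O(\log n)$. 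Subtracting, $\log\bigl(Z_{\textsc{mc}}(n,\lfloor n/2\rfloor)/Z_{\textsc{ac}}(n,\lfloor n/2\rfloor)\bigr) = \tfrac{n}{2}(1-2\log 2) + o(n) = \tfrac{n}{2}\log(e/4) + o(n)$; since $\log(e/4)\neq 0$ this equals $(1+o(1))\tfrac{n}{2}\log(e/4)$, and exponentiating gives the stated identity. So everything reduces to the two matching bounds on $\log Z_{\textsc{mc}}(n,\lfloor n/2\rfloor)$.

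The upper bound is immediate from the trivial inequality $Z_{\textsc{mc}}(n,k)\le\binom{n}{2}^{k-1}$ recorded in the text: with $k=\lfloor n/2\rfloor$ this gives $Z_{\textsc{mc}}(n,\lfloor n/2\rfloor)\le(n^2/2)^{n/2}$, hence $\log Z_{\textsc{mc}}(n,\lfloor n/2\rfloor)\le n\log n - \tfrac{n}{2}\log 2$.

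For the lower bound I would rerun the proof of Proposition~\ref{prop:mult_coal_lower_bound_weak} with the parameter $\delta$ replaced by a sequence $\delta_n\to 0$. The point of letting $\delta_n$ decay is that a \emph{fixed} $\delta$ stops the coalescent $2\delta n$ steps early and so loses a factor of order $n^{4\delta n}$ (which swamps the target, of size $e^{\Theta(n)}$), whereas taking $\delta_n$ with $\delta_n\log n\to 0$ reduces this loss to $e^{o(n)}$. Concretely, take $\delta_n=(\log n)^{-2}$ and $k_n=\lfloor n/2\rfloor-2\delta_n n$. With $E_1=\{I_{n/2-\delta_n n}<n/2\}$ and $E_2=\{\text{all components of }G^{(n)}_{(1-2\delta_n)n/2}\text{ have size at most }\delta_n n\}$, the inequality (\ref{eq:mult_coal_lower_bound_weak}) obtained inside that proof still reads $\hat{Z}_{\textsc{mc}}^{\to}(n,k_n+1)\ge n^{2k_n}(1-\delta_n)^{k_n}$ on $E_1\cap E_2$. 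Combining this with the identity (\ref{eq:zmc_partition_truncated_expectation}), with the monotonicity of $k\mapsto Z_{\textsc{mc}}(n,k)$ (each extra step contributes a factor at least $n-1$, so $Z_{\textsc{mc}}(n,\lfloor n/2\rfloor)\ge Z_{\textsc{mc}}(n,k_n+1)$), and with $\P{E_1\cap E_2}\to 1$, I obtain
\[
Z_{\textsc{mc}}(n,\lfloor n/2\rfloor)\ \ge\ Z_{\textsc{mc}}(n,k_n+1)\ =\ 2^{-k_n}\,\E{\hat{Z}_{\textsc{mc}}^{\to}(n,k_n+1)}\ \ge\ 2^{-k_n}\,n^{2k_n}(1-\delta_n)^{k_n}\,\P{E_1\cap E_2}\, .
\]
Taking logarithms with $k_n=n/2-2\delta_n n$: the term $2k_n\log n = n\log n - 4\delta_n n\log n$ is $n\log n - o(n)$ because $\delta_n\log n\to 0$; the term $-k_n\log 2$ is $-\tfrac{n}{2}\log 2 + o(n)$; and the terms $k_n\log(1-\delta_n)$ and $\log\P{E_1\cap E_2}$ are $o(n)$. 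Hence $\log Z_{\textsc{mc}}(n,\lfloor n/2\rfloor)\ge n\log n - \tfrac{n}{2}\log 2 - o(n)$, which matches the upper bound.

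The step I expect to need the most care is confirming $\P{E_1\cap E_2}\to 1$ with $\delta_n$ decaying rather than fixed --- that is, checking that the second-moment and Chebyshev arguments behind Exercise~\ref{ex:comp_sizes_errg_weak}~(c),(d) and Lemma~\ref{lem:firsthalf_nocycles} are uniform enough to survive $\eps=\eps_n=2\delta_n\to 0$, and in particular that the stochastic-domination step (requiring $(s-1)/(n-3)\le\eps_n/2$ for $s\le(\eps_n/3)n$) still goes through. Tracking constants, those estimates produce error probabilities of order $\delta_n^{-3}n^{-1}$ (second-moment bound on the largest component via Exercise~\ref{ex:comp_sizes_errg_weak}~(c)) and $\delta_n^{-2}n^{-1}$ (Chebyshev in Lemma~\ref{lem:firsthalf_nocycles}), so the only requirement is $\delta_n^{3}n\to\infty$, which $\delta_n=(\log n)^{-2}$ comfortably meets. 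The remaining computations (Stirling, the logarithmic bookkeeping, the reduction to $(e/4)^{(1+o(1))n/2}$) are routine.
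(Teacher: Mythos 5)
Your proposal is correct, and it follows the same broad strategy as the paper (lower bound via the Erd\H{o}s--R\'enyi coupling, trivial upper bound, Stirling for $Z_{\textsc{ac}}$), but it is actually more careful than the paper's own proof, in a way that matters. The statement of Proposition~\ref{prop:mult_coal_lower_bound_weak} gives $\hat{Z}_{\textsc{mc}}^{\to}(n,\lfloor n/2\rfloor)\ge n^{(1-\beta)n}$ w.h.p.\ for each \emph{fixed} $\beta>0$, hence $Z_{\textsc{mc}}(n,\lfloor n/2\rfloor)\ge 2^{-(\lfloor n/2\rfloor-1)}n^{(1+o(1))n}$ where the $o(1)$ is only guaranteed to tend to zero. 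Since $n^{o(1)\cdot n}=e^{o(n\log n)}$ and the target precision is a multiplicative $e^{\Theta(n)}$, this is not quite sufficient by itself; one needs the $o(1)$ in the exponent to be $o(1/\log n)$. You identified exactly this and reran the proof with $\delta=\delta_n\to 0$ satisfying $\delta_n\log n\to 0$ while $\delta_n^3n\to\infty$, checking that Exercise~\ref{ex:comp_sizes_errg_weak}(c),(d) and Lemma~\ref{lem:firsthalf_nocycles} remain valid uniformly in this regime (the error probabilities $O(\delta_n^{-3}n^{-1})$ and $O(\delta_n^{-1}n^{-1})$ still vanish). The choice $\delta_n=(\log n)^{-2}$ meets both constraints, and the rest of your bookkeeping (Stirling for $Z_{\textsc{ac}}$, the identity $\log(e/4)=1-2\log 2$, the trivial upper bound) is correct. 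The paper's proof as literally written, relying on the $n^{(1+o(1))n}$ form, elides this precision issue; your version fills that gap.
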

\begin{proof}
By Proposition~\ref{prop:mult_coal_lower_bound_weak} and (\ref{eq:zmc_partition_expectation_rewrite}), we have 
\[
Z_{\textsc{mc}}(n,\lfloor n/2\rfloor) \ge 2^{-(\lfloor n/2\rfloor-1)} n^{(1+o(1)) n}, 
\]
so by Exercise~\ref{ex:ac_partial_partitition}, 
\[
\frac{Z_{\textsc{mc}}(n,\lfloor n/2\rfloor)}{Z_{\textsc{ac}}(n,\lfloor n/2\rfloor)} = \frac{n^{(1+o(1)) n}}{2^{\lfloor n/2\rfloor-1}n^{\lfloor n/2\rfloor-1} (n-1)_{\lfloor n/2\rfloor-1}} = \frac{n^{(1+o(1))n}(n/2)!}{2^{n/2}n^{n/2} n!}\, .
\]
Using Stirling's approximation\footnote{Stirling's approximation says that $m!/(\sqrt{2\pi m} (m/e)^m) \to 1$ as $m \to \infty$; in fact the (much less precise) fact that $\log(m!) = m\log m - m+o(m)$ is enough for the current situation.}, it follows easily that 
\[
\frac{Z_{\textsc{mc}}(n,\lfloor n/2\rfloor)}{Z_{\textsc{ac}}(n,\lfloor n/2\rfloor)} \ge \pran{\frac{e}{4}}^{(1+o(1))n/2}. 
\]
The corresponding upper bound follows similarly, using that $Z_{\textsc{mc}}(n,\lfloor n/2\rfloor) \le (n(n-1)/2)^{\lfloor n/2\rfloor-1}=n^{n(1+o(1))}/2^{n/2}$. 
\end{proof}
\bex
Perform the omitted calculation using Stirling's formula from the proof of Corollary~\ref{cor:mult_coal_lower_bound_weak}. 
\eex
The preceding corollary is evidence that despite the similarity of the partition functions $Z_{\textsc{mc}}(n)$ and $Z_{\textsc{ac}}(n)$, the fine structure of the multiplicative coalescent is may be interestingly different from that of the additive coalescent. 

\subpar{The multiplicative coalescent and Kruskal's algorithm} \label{sec:mult_coal_other_versions}
There is a pleasing interpretation of ``Version 2'' of the multiplicative coalescent, which is driven by exchangeable distinct edge weights ${\mathbf{W}}=\{W_{\{j,k\}}, 1 \le  j<k \le n\} = \{W_e,e \in E(K_n)\}$. (A special case is that the elements of $\mathbf{W}$ are iid continuous random variables.). The symmetry of the model makes it straightforward to verify that this results in a sequence $(F_1,\ldots,F_n)$ with the same distribution as the multiplicative coalescent.

\medskip
\isfullwidth{
\begin{mdframed}[style=algorithm]
\noindent {\bf Multiplicative Coalescent Version 2: Kruskal's algorithm.}\\ 
Let $F_1$ be a forest of $n$ isolated vertices $1,\ldots,n$. \\
For $1 \le i < n$:\\
\hspace*{0.2cm}$\star$ Let $\{j,k\}\in E(K_n)$ minimize 
\iftoggle{bookversion}{}{}
$\{W_{\{j,k\}}: j,k \mbox{ in distinct trees of }F_i\}$. \\
\hspace*{0.2cm}$\star$ Form $F_{i+1}$ from $F_i$ by adding $\{j,k\}$. 
\end{mdframed}
}
\medskip
\bsex
Prove that any exchangeable, distinct edge weights $\mathbf{W}=\{W_e,e \in E(K_n)\}$ again yield a process with the law of the multiplicative coalescent. 
\esex
At step $i$, the edge-weight driven multiplicative coalescent simply adds the smallest weight edge whose endpoints lie in distinct components of $F_i$. 
In other words, it adds the smallest weight edge whose addition will not create a {\em cycle} in the growing graph. This is simply {\em Kruskal's algorithm} for building the minimum weight spanning tree. 
When the weights $W_{\{j,k\}}$ are all non-negative, the tree obtained at the end of the Version 2 multiplicative coalescent, $T_1^{(n)}$, is the minimum weight spanning tree of $K_n$ with weights $\mathbf{W}$. We denote it 
$\mathrm{MST}(K_n,\mathbf{W})$, and refer to it as the {\em random MST of $K_n$}.

Order $E(K_n)$ by increasing order of $\mathbf{W}$-weight as $e_1,\ldots,e_{n \choose 2}$. The exchangeability of $\mathbf{W}$ implies this is a uniformly random permutation of $E(K_n)$. Letting $G^{(n)}_k$ have edges $e_1,\ldots,e_k$ thus yields an important instantiation of our coupling of the Erd\H{o}s-R\'enyi coalescent and the multiplicative coalescent; we return to this in Section~\ref{sec:frieze_lim}.

\subpar{Other features of the multiplicative coalescent} \label{sec:mult_coal_remainder}
\iftoggle{bookversion}{%
}{%
The remainder of the section is not essential to the main development. The following exercise was inspired by a discussion with Remco van der Hofstad. 
}%
\bex[First-passage percolation]
Consider the multiplicative coalescent driven by exchangeable, distinct edge weights $\mathbf{W}$ and for $1 \le i < j \le n$, let $d(i,j) = \min \sum_{e \in \gamma} W_e$, the minimum taken over paths from $i$ to $j$ in $K_n$. Show that the minimum is attained by a unique path $\gamma_{i,j}$. Find exchangeable edge weights $\{W_e,e \in E(K_n)\}$ for which, for each for each $1 \le i < j \le n$, $\gamma_{i,j}$ is a path of $T_1^{(n)}$. 

\eex

Finally, we turn to Version 3 of the process, in which we view arbitrary iid non-negative weights ${\mathbf{X}}=\{X_{i,j}, 1 \le i < j \le n\}$ as {\em rates} for edge addition. In view of the preceding paragraph, this gives a process that results in a tree with the same {\em distribution} as the random MST of $K_n$, but which is not necessarily equal to the MST. In particular, the tree is {\em not} a deterministic function of the edge weights; for example, we may take $\mathbf{X}$ to be a deterministic vector such as the all-ones vector, whereas the resulting tree always is random.
\bex
Find (iid random) {\em rates} $\mathbf{X}$ for which, in version 3 of the process, the resulting tree $T_1^{(n)}$ is equal to the random MST of $K_n$ with {\em weights} $\mathbf{X}$, with probability tending to one as $n \to \infty$. \eex

\iftoggle{bookversion}{
In the next section, we consider differences between the structures of the trees formed by the three coalescents.

\secsub{The heights of the three coalescent trees}}
{
\section{Intermezzo: The heights of the three coalescent trees}
To date we have been primarily studying the partition functions of the coalescent processes. The processes have many other interesting features, however. In this section we discuss differences between the structures of the trees formed by the three coalescents.}

Write $T_{\textsc{kc}}^{(n)},T_{\textsc{ac}}^{(n)}$, and $T_{\textsc{mc}}^{(n)}$, respectively, for the trees formed by Kingman's coalescent, the additive coalescent, and the multiplicative coalescent. In each case the coalescent starts from $n$ isolated vertices $\{1,\ldots,n\}$, so each of these trees has vertices $\{1,\ldots,n\}$. If $T$ is any of these trees and $e$ is an edge of $T$, we write 
$L(e)=i$ if $e$ was the $i$'th edge added during the execution of the coalescent. Above, we established the following facts about the distributions of these random trees. 

\medskip
\begin{enumerate}
\item Ignoring vertex labels, $(T_{\textsc{kc}}^{(n)},L)$ is uniformly distributed over pairs $(t,\ell)$, where $t$ is a rooted tree with $n$ vertices and $\ell$ is a decreasing edge labelling of $t$. (We simply refer to such pairs as {\em decreasing trees with $n$ vertices}, for short.)
\item $T_{\textsc{ac}}^{(n)}$ is uniformly distributed over the set of rooted trees with vertices $\{1,\ldots,n\}$. (We refer to such trees as {\em rooted labeled trees with $n$ vertices}.)
\item $T_{\textsc{mc}}^{(n)}$ is distributed as the minimum weight spanning tree of the complete graph $K_n$, with iid continuous edge weights ${\mathbf{W}}=\{W_{i,j}, 1 \le i < j \le n\}$. 
\end{enumerate}
\medskip

What is known about these three distributions? To illustrate the difference between them, we consider a fundamental tree parameter, the {\em height}: this is simply the greatest number of edges in any path starting from the root.\footnote{A glance back at Figures~\ref{mfig:additive_coal_tree},~\ref{mfig:kingman_coal_tree} and~\ref{mfig:mult_coal_tree} gives a hint as to the relative heights of the three trees.} The third tree, $T_{\textsc{mc}}^{(n)}$ is not naturally rooted, but one may check that any choice of root will yield the same height up to a multiplicative factor of two; we root $T_{\textsc{mc}}^{(n)}$ at vertex $1$ by convention. Given a rooted tree $t$, we write $r(t)$ for its root and $h(t)$ for its height. The following exercise develops a fairly straightforward route to upper bounds on $h(T_{\textsc{kc}}^{(n)})$ that are tight, at least to first order. 

\bex\label{ex:poisson_tail}
Let $D_i$ be the number of edges on the path from vertex $i$ to $r(T_{\textsc{kc}}^{(n)})$. 
\begin{itemize}
\item[(a)] Show that $\{D_1,\ldots,D_n\}$ are exchangeable random variables. 
\item[(b)] Show that $D_1$ is stochastically dominated by a Poisson$(\log n)$ random variable. 
\item[(c)] Show that for $X$ a Poisson$(\mu)$ random variable, for $x \ge \mu$, 
$\P{X > x} \le e^{-\mu} (e\mu/x)^x$.
\item[(d)] Show that $\P{\max_{1 \le i \le n} D_i \ge e\log n} \to 0$ as $n \to \infty$.
\item[(e)] Show that $\limsup_{n \to \infty} (\max_{1 \le i \le n} D_i - e\log n) \to -\infty$ in probability. 
\end{itemize}
\eex

We next turn to $T_{\textsc{ac}}^{(n)}$. I am not aware of an easy way to directly use the additive coalescent to analyze the height of $T_{\textsc{ac}}^{(n)}$. However, one can use the additive coalescent to derive combinatorial results which, together with exchangeability, yields lower bounds of the correct order of magnitude, and upper bounds that are tight up to poly-logarithmic corrections; such bounds are the content of the following exercise. 
A non-negative random variable $R$ has the {\em standard Rayleigh} distribution if it has density $f(x)=xe^{-x^2/2}$ on $[0,\infty)$. 
\bex \label{ex:ac_height_bounds}
Let $D_i$ be the number of edges on the path from vertex $i$ to $r(T_{\textsc{ac}}^{(n)})$. 
\begin{itemize}
\item[(a)] Show that $\{D_1,\ldots,D_n\}$ are exchangeable random variables. 
\item[(b)] Show that the number of pairs $(t,i)$, where $t$ is a rooted labeled tree with $V(t)=[n]$ and $i \in V(t)$ has $d(r(t),i)=k-1$, is 
$k\cdot (n)_k \cdot n^{n-k-1}$
\item[(c)] 
Show that for $1 \le k \le n$, $\P{D_1=k-1} = \frac{k}{n} \prod_{i=1}^{k-1} \left(1-\frac{i}{n}\right)$. Conclude that $D_1/\sqrt{n}$ converges in distribution to a standard Rayleigh. 
\item[(d)] Using (c) and a union bound, show that if $(c_n,n \ge 1)$ are constants with $c_n \to \infty$ then $\P{\max_{1 \le i \le n} D_i > c_n \sqrt{n \log n}} \to 0$. 
\item[(e)] Use the exchangeability of the trees in a uniformly random ordered labeled forest to prove that $\P{|\{i: D_i \ge k/2 \}| \ge n/2~|~D_1 = k} \ge 1/2$ for all $1 \le k \le n$. 
\item[(f] Use (c) and (e) to show that f $(c_n,n \ge 1)$ are constants with $c_n \to \infty$ then $\P{\max_{1 \le i \le n} D_i > c_n \sqrt{n}} \to 0$, strengthening the result from (d). 
\end{itemize}
\eex
From the preceding exercise, we see immediately that $T_{\textsc{ac}}^{(n)}$ has a very different structure from $T_{\textsc{kc}}^{(n)}$, which had logarithmic height. Moreover, the heights of the two trees are {\em qualitatively} different. The height of $T_{\textsc{kc}}^{(n)}$ is concentrated: $h(T_{\textsc{kc}}^{(n)})/\log n \to e$ in probability. On the other hand, $h(T_{\textsc{ac}}^{(n)})$ is diffuse: $h(T_{\textsc{ac}}^{(n)})/n^{1/2}$ converges in distribution to a non-negative random variable with a density.\footnote{Neither of these convergence statements follows from the exercises, and both require some work to prove. The fact that $h(T_{\textsc{kc}}^{(n)})/\log n \to e$ in probability was first shown by \citet{devroye86union}. The distributional convergence of $h(T_{\textsc{ac}}^{(n)})/n^{1/2}$ is a result of \citet{renyi67height}.}
\footnote{In fact, if edge lengths in $T_{\textsc{ac}}^{(n)}$ are multiplied by $n^{-1/2}$ then the resulting object converges in distribution to a random compact metric space called the {\em Brownian continuum random tree} (or {\em CRT}), and $ h(T_{\textsc{ac}})/n^{1/2}$ converges in distribution to the height of the CRT. For more on this important result, we refer the reader to \cite{aldouscrtov91,legall05b}}

What about the tree $T_{\textsc{mc}}^{(n)}$ built by the multiplicative coalescent? Probabilistically, this is the most challenging of the three to study. 
For $T_{\textsc{kc}}^{(n)}$ and $T_{\textsc{ac}}^{(n)}$, Exercises~\ref{ex:poisson_tail} and~\ref{ex:ac_height_bounds} yielded exact or nearly exact expressions for the distance between the root and a fixed vertex (by exchangeability, this is equivalent to the distance between the root and a {\em uniformly random} vertex. The partition function $Z_{\textsc{mc}}(n)$ seems too complex for such a direct argument to be feasible. 

The coalescent procedure can be used to obtain lower bounds on the height, but with greater effort than in the two preceding cases. Our approach is elucidated by the following somewhat challenging exercise. Let $K_n$ have iid Exponential$[0,1]$ edge weights, and let $H$ be the subgraph of $K_n$ with the same vertices, but containing only edges of weight at most $1/n$. A {\em tree component} of $H$ is a connected component of $H$ that is a tree. 
\bex
\item[(a)] Let $N$ be the number of vertices in tree components of $H$, whose component has size $\lfloor n^{1/4} \rfloor$. Using Chebyshev's inequality, show that $\P{N=0} \to 0$ as $n \to \infty$.
\item[(b)] Fix $S \subset \{1,\ldots,n\}$. Show that, given that $H$ contains a tree component whose vertices are precisely $S$, then such a component is uniformly distributed over labeled trees with vertices $S$. 
\item[(c)] Use Kruskal's algorithm to show that any tree component of $H$ is a subtree of the minimum weight spanning tree of $T_{\textsc{mc}}^{(n)}$. 
\item[(d)] Use Exercise~\ref{ex:ac_height_bounds}~(c) to conclude that, as $n \to \infty$, 
\[
\P{T_{\textsc{mc}}^{(n)}~\mbox{ has height at most } \frac{n^{1/8}}{\log^2 n}} \to 0\, .
\]
\eex
This shows that $T_{\textsc{mc}}^{(n)}$ is quite different from $T_{\textsc{kc}}^{(n)}$.\footnote{With more care, one can show that with high probability $H$ contains tree components containing around $n^{2/3}$ vertices and with height around $n^{1/3}$, which yields that with high probability $T_{\textsc{mc}}^{(n)}$ has height of order at least $n^{1/3}$.} It is not as straightforward to bound the height of $T_{\textsc{mc}}^{(n)}$ away from $n^{1/2}$ using the tools currently at our disposal. It turns out that $T_{\textsc{mc}}^{(n)}$ has height of order $n^{1/3}$ (and has non-trivial fluctuations on this scale), but proving this takes a fair amount of work \cite{addario09critical} and is beyond the scope of these notes. 
\bex[Open problem -- two point function of random MSTs]
Let $D_n$ be the distance from vertex $n$ to vertex $1$ in $T_{\textsc{mc}}^{(n)}$. Obtain an explicit expression for the distributional limit of $D_n/n^{1/3}$. 
\eex

\chsec{The susceptibility process.}\label{ch:zmc_lower_bound}
The remainder of the paper focusses exclusively on the multiplicative coalescent, which we continue to denote $(F_1,\ldots,F_n)$. 
Recall that $\hat{Z}_{\textsc{mc}}^{\to}(n,k) = \prod_{i=1}^{k-1} \pran{n^2-\sum_{T \in F_i} |T|^2}$. 
The terms in the preceding product are not independent; linearity of expectation makes the ``empirical entropy'' $\log \hat{Z}_{\textsc{mc}}^{\to}(n)$ easier to study. 
\begin{equation}\label{eq:zmc_partition_jensenbound}
\E{\log\hat{Z}_{\textsc{mc}}^{\to}(n)} = \sum_{k=1}^{n-1} \E{\log(n^2-\sum_{T \in F_k} |T|^2)}\, .
\end{equation}
The expectation in the latter sum is closely related to the {\em susceptibility} of the forest $F_i$. More precisely, given a finite graph $G$, write $\mathcal{C}(G)$ for the set of connected components of $G$. The susceptibility of $G$ is the quantity 
\[
\chi(G) = \frac{1}{|G|} \sum_{C \in \mathcal{C}(G)} |C|^2. 
\]
Recalling that $C(v)=C_G(v)$ is the component of $G$ containing $v$, we may also write $\chi(G) = |G|^{-1} \sum_{v \in V(G)}|C(v)|$, so $\chi(G)$ is the expected size of the component containing a uniformly random vertex from $G$. 
\bex\label{eq:susc_comp_size_bounds}
Let $G$ be any graph, write $L$ and $S$ for the number of vertices in the largest and second-largest components of $G$, respectively. Then
\[
\frac{L^2}{|G|} \le \chi(G) \le \frac{L^2}{|G|} + S. 
\]
\eex
Viewing $F_i$ as a graph with vertices $\{1,\ldots,n\}$, (\ref{eq:zmc_partition_jensenbound}) becomes
\begin{equation}\label{eq:logpart_lowerbound_suscept}
\E{\log\hat{Z}_{\textsc{mc}}^{\to}(n)} = 2(n-1)\log n + \E{\sum_{k=1}^{n-1} \log\pran{1-\frac{\chi(F_k)}{n}}}\, .
\end{equation}
In order to analyze this expression, we use the connection with the Erd\H{o}s-R\'enyi coalescent $(G^{(n)}_m,0 \le m \le {n \choose 2})$, which we described in Section~\ref{sec:ercoal}; in brief, we coupled to $(F_k, 1 \le k \le n)$ by letting $F_k$ have edges $\{e_{I_j}, 1\le j \le k\}$, where $I_k$ was the first time $m$ that $G^{(n)}_m$ had $n+1-k$ components. 

\begin{prop}\label{prop:empirical_mc_identity}
\begin{align*}
\E{\log{\hat{Z}_{\textsc{mc}}^{\to}(n)}} & = 2(n-1)\log n\,  + \\
 & \sum_{m=0}^{{n \choose 2}- 1} \left(1-\frac{n+2m}{n^2}\right)^{-1}\E{\log \pran{1-\frac{\chi(G^{(n)}_{m})}{n}} \cdot \pran{1-\frac{\chi(G^{(n)}_{m})}{n}}}\, .
\end{align*}
\end{prop}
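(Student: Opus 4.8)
The plan is to rewrite the sum $\sum_{k=1}^{n-1}\log(1-\chi(F_k)/n)$ appearing in~(\ref{eq:logpart_lowerbound_suscept}) as a sum indexed by the time parameter $m$ of the Erd\H{o}s--R\'enyi coalescent, and then take a conditional expectation given $G^{(n)}_m$. First I would record that, under the coupling, the susceptibility is constant along each ``plateau'' of the Erd\H{o}s--R\'enyi process: for $I_k \le m < I_{k+1}$ the graph $G^{(n)}_m$ has the same components as $G^{(n)}_{I_k}$ (every edge added in that range falls inside an existing component, by definition of $I_{k+1}$), and the components of $F_k$ have the same vertex sets as those of $G^{(n)}_{I_k}$; hence $\chi(G^{(n)}_m)=\chi(F_k)$ for all such $m$, where $\chi(F_k)=n^{-1}\sum_{T\in F_k}|T|^2$.

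Next I would set $J_m=\tau_{m+1}-\tau_m\in\{0,1\}$, the indicator that $e_{m+1}$ joins two distinct components of $G^{(n)}_m$. The indices $m$ with $J_m=1$ are precisely $m=I_2-1,I_3-1,\dots,I_n-1$, of which there are $n-1$, and by the previous paragraph $\chi(G^{(n)}_{I_{k+1}-1})=\chi(F_k)$. This yields the pathwise identity
\[
\sum_{k=1}^{n-1}\log\pran{1-\frac{\chi(F_k)}{n}}=\sum_{m=0}^{{n \choose 2}-1} J_m\log\pran{1-\frac{\chi(G^{(n)}_m)}{n}},
\]
where, when $J_m=0$, the summand is read as $0$ (relevant only if $G^{(n)}_m$ is already connected). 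Then I would compute the conditional probability of a useful edge: given $G^{(n)}_m$, the edge $e_{m+1}$ is uniform among the ${n \choose 2}-m$ non-edges, and every pair of vertices in distinct components of $G^{(n)}_m$ is automatically a non-edge, so the number of useful choices is $\frac12\pran{n^2-\sum_{C\in\mathcal{C}(G^{(n)}_m)}|C|^2}=\frac{n^2}{2}\pran{1-\chi(G^{(n)}_m)/n}$. Since ${n \choose 2}-m=\frac{n^2}{2}\pran{1-(n+2m)/n^2}$,
\[
\P{J_m=1\mid G^{(n)}_m}=\pran{1-\frac{n+2m}{n^2}}^{-1}\pran{1-\frac{\chi(G^{(n)}_m)}{n}}.
\]

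Finally, taking expectations in the pathwise identity, conditioning on $G^{(n)}_m$ in each summand, and pulling out the deterministic (and, for $m\le{n \choose 2}-1$, positive) factor $\pran{1-(n+2m)/n^2}^{-1}$ gives
\[
\E{\sum_{k=1}^{n-1}\log\pran{1-\frac{\chi(F_k)}{n}}}=\sum_{m=0}^{{n \choose 2}-1}\pran{1-\frac{n+2m}{n^2}}^{-1}\E{\log\pran{1-\frac{\chi(G^{(n)}_m)}{n}}\pran{1-\frac{\chi(G^{(n)}_m)}{n}}},
\]
and substituting into~(\ref{eq:logpart_lowerbound_suscept}) yields the proposition.

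The part requiring the most care is the bookkeeping in the pathwise identity: matching the $n-1$ useful steps of the Erd\H{o}s--R\'enyi coalescent with the forests $F_1,\dots,F_{n-1}$, and checking that $\chi(G^{(n)}_m)$ really is constant on each block $[I_k,I_{k+1})$ so that evaluating at $m=I_{k+1}-1$ reproduces $\chi(F_k)$. One should also handle the degenerate case $\chi(G^{(n)}_m)=n$ (connected $G^{(n)}_m$): there $\log(1-\chi/n)=-\infty$, but $J_m=0$ and the conditional probability above also vanishes, while on the right-hand side $x\log x\to 0$ as $x\to 0^{+}$, so every term is well defined; it is cleanest to phrase the right-hand summand through the continuous bounded function $x\mapsto x\log x$ on $[0,1]$.
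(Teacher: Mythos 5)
Your proof is correct and follows essentially the same route as the paper's: rewrite the sum over coalescent steps $k$ as a sum over Erd\H{o}s--R\'enyi time $m$ using the indicator that $e_{m+1}$ joins two components, apply the tower law, and compute $\P{J_m=1\mid G^{(n)}_m}$ as the ratio of useful non-edges to all non-edges. You make explicit two points the paper's proof treats lightly---that $\chi(G^{(n)}_m)$ is constant on each plateau $[I_k,I_{k+1})$ so that evaluating at $m=I_{k+1}-1$ recovers $\chi(F_k)$, and that the $0\cdot\log 0$ degeneracy when $G^{(n)}_m$ is connected is harmless---and your direct count of useful edges is a cleaner way to obtain the conditional probability than the paper's random-pair $(U,V)$ argument (which also has a typo, $G^{(n)}_{m-1}$ for $G^{(n)}_m$, in the displayed formula).
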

\begin{proof}
In the coupling with the Erd\H{o}s-R\'enyi coalescent, $F_k$ and $G^{(n)}_{I_k}$ have the same connected components, so $\chi(F_k)=\chi(G^{(n)}_{I_k}$. We obtain the identity 
\begin{align*}
\sum_{k=1}^{n-1} \log\pran{1-\frac{\chi(F_k)}{n}}
& = \sum_{k=1}^{n-1} \log \pran{1-\frac{\chi(G^{(n)}_{I_k})}{n}}\\
& = \sum_{m =0}^{{n \choose 2}-1} \log \pran{1-\frac{\chi(G^{(n)}_{m})}{n}} \I{\chi(G^{(n)}_{m+1}) > \chi(G^{(n)}_{m})}
\end{align*}
Using the tower law for conditional expectations, we thus have 
\begin{align}
		& \E{\sum_{k=1}^{n-1} \log\pran{1-\frac{\chi(F_k)}{n}}} \nonumber\\
		= & \sum_{m=0}^{{n \choose 2}-1} \E{ \log \pran{1-\frac{\chi(G^{(n)}_{m})}{n}} \I{\chi(G^{(n)}_{m+1}) > \chi(G^{(n)}_{m})}}. \nonumber\\
		= & \sum_{m=0}^{{n \choose 2}-1} \E{ \E{\log \pran{1-\frac{\chi(G^{(n)}_{m})}{n}} \I{\chi(G^{(n)}_{m+1}) > \chi(G^{(n)}_{m})}~|~G^{(n)}_{m}}} \nonumber\\
		= & \sum_{m=0}^{{n \choose 2}-1} \E{\log \pran{1-\frac{\chi(G^{(n)}_{m})}{n}} \cdot \p{\chi(G^{(n)}_{m+1}) > \chi(G^{(n)}_{m})~|~G^{(n)}_{m}}} \nonumber
\end{align}
For any finite graph $G$, the quantity $\chi(G)/|G| = \sum_{C \in \cC(G)}|C|^2/|G|^2$ is simply the probability that a pair $(U,V)$ of independent, uniformly random vertices of $G$ lie in the same component of $G$. Let $(U,V)$ be independent, uniformly random elements of $[n]=G^{(n)}_{m-1}$. Then 
\[
\p{U \ne V,\{U,V\} \not \in E(G^{(n)}_{m}) ~|~ G^{(n)}_{m}}  = 1-\frac{1}{n} - \frac{2|E(G^{(n)}_{m})|}{n^2} 
											 = 1- \frac{n+2m}{n^2} 
\]
Conditionally given that $U \ne V$ and $\{U,V\} \not \in E(G^{(n)}_{m})$, the pair $\{U,V\}$ has the same law as $e_{m+1}$. 
It follows that 
\begin{align}
	& \p{\chi(G^{(n)}_{m+1}) > \chi(G^{(n)}_m)~|~G^{(n)}_m} \nonumber\\
 = 	& \p{C_{G^{(n)}_m}(U) \ne C_{G^{(n)}_m}(V)~|~G^{(n)}_m,U \ne V,\{U,V\} \not \in E(G^{(n)}_m)} \nonumber\\
 = 	& \left(1-\frac{\chi(G^{(n)}_{m-1})}{n}\right)\left(1- \frac{n+2m}{n^2}\right)^{-1}\, , \label{eq:prob_to_exp_susc}
\end{align}
so 
\begin{align}
		& \E{\sum_{k=1}^{n-1} \log\pran{1-\frac{\chi(F_k)}{n}}} \nonumber\\
		= &  \sum_{m=0}^{{n \choose 2}-1}\left(1- \frac{n+2m}{n^2}\right)^{-1} \E{\log \pran{1-\frac{\chi(G^{(n)}_{m})}{n}} \cdot \pran{1-\frac{\chi(G^{(n)}_{m})}{n}}}\, .\label{eq:logsum_rewrite}
\end{align}
The proposition now follows from (\ref{eq:logpart_lowerbound_suscept}). 
\end{proof}
It turns out that there is a deterministic, increasing function $f:[0,\infty) \to [0,1]$ such that $\sup_{0 \le m < {n \choose 2}} |\chi(G_m^{(n)})/n-f(m/n)| \to 0$ in probability, as $n \to \infty$. Much of the rest of the paper is devoted to explaining this fact in more detail. 
However, imagine for the moment that such a function $f$ exists and, moreover, that terms in the sum with $m \gg n$ have an insignificant total contribution. With these assumptions, the sum 
in (\ref{eq:logsum_rewrite}) 
looks like a Riemann approximation for 
$\int_0^{\infty} (1-f(x)) \log(1-f(x)) \mathrm{d}x$ with mesh $1/n$. We should then expect that
\[
\E{\log\hat{Z}_{\textsc{mc}}^{\to}(n)} = 2(n-1)\log n - (1+o(1)) n\cdot \int_0^\infty (1-f(x))\log(1-f(x)) \mathrm{d}x\, .
\]
This is indeed the case. Furthermore, enough is known about $f$ that explicit evaluation of the integral is possible, and we obtain the following theorem. 
\begin{thm}\label{thm:zmc_lower}
Let 
\begin{equation}\label{zmczeta:zmcint_result}
\zeta_{\textsc{mc}} = 
\zeta(2)-3+\log 2-\log^2 2\, .
\end{equation}
Then 
\[\E{\log\hat{Z}_{\textsc{mc}}(n)} = n\cdot (2\log n + \zeta_{\textsc{mc}}+o(1)).\]
\end{thm}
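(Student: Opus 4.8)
The plan is to feed the uniform susceptibility concentration into the exact identity of Proposition~\ref{prop:empirical_mc_identity}, recognising the resulting sum as a Riemann approximation. Since $\log\hat{Z}_{\textsc{mc}}(n)=\log\hat{Z}_{\textsc{mc}}^{\to}(n)-(n-1)\log 2$, it suffices to compute $\E{\log\hat{Z}_{\textsc{mc}}^{\to}(n)}$, which Proposition~\ref{prop:empirical_mc_identity} writes as $2(n-1)\log n$ plus $S_n:=\sum_{m=0}^{\binom{n}{2}-1}\pran{1-\tfrac{n+2m}{n^2}}^{-1}\E{\log\pran{1-\tfrac{\chi(G^{(n)}_{m})}{n}}\pran{1-\tfrac{\chi(G^{(n)}_{m})}{n}}}$. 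The key input is Theorem~\ref{thm:susc_conc}, which supplies a deterministic increasing $f\colon[0,\infty)\to[0,1]$ with $\sup_{0\le m<\binom{n}{2}}|\chi(G^{(n)}_m)/n-f(m/n)|\to0$ in probability (with a polynomial rate). Classical Erd\H{o}s--R\'enyi component-size asymptotics identify $f$: for $x\le 1/2$ all components of $G^{(n)}_{\lfloor xn\rfloor}$ have size $o(n)$, so $f(x)=0$; for $x>1/2$ the largest component has $(\rho(2x)+o(1))n$ vertices and dominates the sum of squares, so $f(x)=\rho(2x)^2$, where $\rho(c)\in(0,1]$ solves $1-\rho=e^{-c\rho}$. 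Since $\rho(1)=0$, $f$ is continuous; writing $g(t):=(1-t)\log(1-t)$, extended by $g(1)=0$ so that $g$ is bounded and continuous on $[0,1]$, one has $|g(f(x))|=O(xe^{-2x})$ as $x\to\infty$, so $\int_0^\infty g(f(x))\,\mathrm dx$ converges.

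The heart of the argument is to show $S_n=(1+o(1))\,n\int_0^\infty g(f(x))\,\mathrm dx$. I would split the sum at $m=\lfloor An\rfloor$ for a large constant $A$ (eventually letting $A\to\infty$). On the \emph{bulk} $m\le An$, the prefactor is $1+O(A/n)=1+o(1)$ uniformly; uniform continuity of $g$, the uniform concentration of $\chi(G^{(n)}_m)/n$ about $f(m/n)$, and bounded convergence give $\E{g(\chi(G^{(n)}_m)/n)}=g(f(m/n))+o(1)$ uniformly in $m\le An$; and $m\mapsto g(f(m/n))$ is a Riemann sum for a continuous function, so $\sum_{m\le An}=n\int_0^A g(f(x))\,\mathrm dx+o(n)$. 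On the \emph{supercritical tail} $m>An$, use that $g(\chi(G^{(n)}_m)/n)=0$ whenever $G^{(n)}_m$ is connected, which holds with probability $1-O(n^2e^{-2m/n})$; for $m\gtrsim n\log^2 n$ this probability is $1-o(n^{-2})$, and since $|g|\le e^{-1}$ while $\sum_m(1-\tfrac{n+2m}{n^2})^{-1}=O(n^2\log n)$, those terms contribute $o(1)$ in total (this also absorbs the mild blow-up of the prefactor as $m\to\binom{n}{2}$). For $An<m\lesssim n\log^2 n$, combine the elementary bound $|g(t)|\le\sqrt{1-t}$ with a matching lower bound $\E{\chi(G^{(n)}_m)}\ge nf(m/n)-o(n)$ (from the concentration and $\E{\chi(G^{(n)}_m)}\ge\E{|C_{\max}|^2}/n\ge(\E{|C_{\max}|})^2/n$) to get $\E{|g(\chi(G^{(n)}_m)/n)|}=O\pran{\sqrt{1-f(m/n)}+o(1)}$, whose sum over $m>An$ is $(\kappa(A)+o(1))n$ with $\kappa(A)\to0$ as $A\to\infty$. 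Combining and letting $n\to\infty$ then $A\to\infty$ gives $S_n=(1+o(1))\,n\int_0^\infty g(f(x))\,\mathrm dx$, hence $\E{\log\hat{Z}_{\textsc{mc}}(n)}=n\pran{2\log n-\log2+o(1)}+(1+o(1))\,n\int_0^\infty g(f(x))\,\mathrm dx$.

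It remains to evaluate the integral. Substituting $c=2x$ turns it into $\tfrac12\int_1^\infty g(\rho(c)^2)\,\mathrm dc$, and parametrising by $\rho$ through $c=-\rho^{-1}\log(1-\rho)$ (so $\rho$ runs over $(0,1)$, with $\mathrm dc=\pran{\rho^{-2}\log(1-\rho)+\rho^{-1}(1-\rho)^{-1}}\mathrm d\rho$) and using $1-\rho^2=(1-\rho)(1+\rho)$, $\log(1-\rho^2)=\log(1+\rho)-c\rho$, the integrand becomes a combination of elementary terms together with terms that, after an integration by parts, reduce to $\int_0^1\tfrac{-\log(1-t)}{t}\,\mathrm dt=\zeta(2)$. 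Collecting everything gives $\int_0^\infty g(f(x))\,\mathrm dx=\zeta(2)-3+2\log2-\log^2 2$, and therefore $\E{\log\hat{Z}_{\textsc{mc}}(n)}=n\pran{2\log n+\zeta_{\textsc{mc}}+o(1)}$ with $\zeta_{\textsc{mc}}=\zeta(2)-3+\log2-\log^2 2$.

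I expect the main obstacle to be the passage from $S_n$ to the integral: one must convert the \emph{in-probability} uniform concentration of Theorem~\ref{thm:susc_conc} into control of \emph{expectations}, simultaneously for all $\Theta(n^2)$ indices $m$, so that the per-term error is genuinely $o(1)$ where it matters and the aggregate error over the whole infinite range is $o(n)$. This is what forces the split above — bounded convergence on the $O(n)$ bulk terms where $f$ actually varies, the near-certain connectivity of $G^{(n)}_m$ to annihilate the far tail and tame the diverging prefactor, and a crude $\sqrt{1-t}$ estimate glued to a second-moment lower bound on the susceptibility in between — while the critical window $m\approx n/2$ needs no special care precisely because $f$ is continuous there and vanishes to its left. (If Theorem~\ref{thm:susc_conc} is not taken for granted, proving it is of course the real work.)
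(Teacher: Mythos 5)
Your proposal is correct and follows essentially the same route as the paper: start from Proposition~\ref{prop:empirical_mc_identity}, feed in the susceptibility concentration of Theorem~\ref{thm:susc_conc} (packaged as a uniform expectation estimate — exactly what the paper's Lemma~\ref{lem:single_exp_term_approx} does, using Exercise~\ref{ex:binom_lower_bd} to transfer from $G(n,p)$ to $G^{(n)}_m$), recognise the sum as a Riemann sum, kill the supercritical tail via connectivity (Exercise~\ref{ex:connect_threshold}), and evaluate the resulting integral by the substitution $\alpha=\alpha(\lambda)$, $u=-\log(1-\alpha)$ (the paper's Proposition~\ref{zmczeta:zmcinformal_prop}, which your $c=-\rho^{-1}\log(1-\rho)$ parametrisation reproduces). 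The only organizational difference is in the tail: you cut at $m=An$ and then let $A\to\infty$, introducing an intermediate window $An<m\lesssim n\log^2 n$ controlled by $|g(t)|\le\sqrt{1-t}$ plus a lower bound on $\E{\chi}$, while the paper truncates once at $m=5n\log n$, Riemann-approximates over the whole range $[0,10\log n]$ using that $g\circ\alpha^2$ is Lipschitz with bounded derivative, and absorbs the tail of the integral by $\lim_{x\to\infty}\int_x^\infty g(\alpha^2(\lambda))\,\mathrm{d}\lambda=0$. Your extra window is harmless but unnecessary, and one small point worth tightening: in the intermediate range the additive $o(1)$ error in $\E{\chi(G^{(n)}_m)}/n$ passes through the square root, so you need it to be $o(1/\log^4 n)$ (which Theorem~\ref{thm:susc_conc}'s $O(n^{-1/5})$ bound amply provides) before the sum over $\Theta(n\log^2 n)$ terms is genuinely $o(n)$.
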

Numerically, $\zeta_{\textsc{mc}}$ is around $-1.14237$. 
\begin{cor}\label{cor:exp_decay}
There is $c > 0$ such that $\p{\hat{Z}_{\textsc{mc}}(n)/\e\hat{Z}_{\textsc{mc}}(n) < e^{-cn}} \to 1$ as $n \to \infty$. 
\end{cor}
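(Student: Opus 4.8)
The plan is to exploit the gap between $\log\e\hat{Z}_{\textsc{mc}}(n)$ and $\E{\log\hat{Z}_{\textsc{mc}}(n)}$, which Theorem~\ref{thm:zmc_lower} already pins down at linear order, and then supply a one-sided concentration estimate for $\log\hat{Z}_{\textsc{mc}}(n)$. By Proposition~\ref{prop:zmc_exact} and (\ref{eq:zmc_partition_expectation_rewrite}), Stirling's formula gives $\log\e\hat{Z}_{\textsc{mc}}(n)=\log Z_{\textsc{mc}}(n)=\log\pran{n^{n-2}(n-1)!}=n(2\log n-1)+O(\log n)$, whereas Theorem~\ref{thm:zmc_lower} gives $\E{\log\hat{Z}_{\textsc{mc}}(n)}=n(2\log n+\zeta_{\textsc{mc}}+o(1))$. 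Since $\zeta_{\textsc{mc}}\approx-1.142$ is strictly less than $-1$, these differ by $n(\zeta_{\textsc{mc}}+1+o(1))$, a negative multiple of $n$. Hence the Corollary follows once we establish: for every $\epsilon>0$, $\p{\log\hat{Z}_{\textsc{mc}}(n)\le\E{\log\hat{Z}_{\textsc{mc}}(n)}+\epsilon n}\to1$. Indeed, choosing $\epsilon\in(0,-(1+\zeta_{\textsc{mc}}))$ and $c=-\tfrac12(1+\zeta_{\textsc{mc}}+\epsilon)>0$, on this event $\log\hat{Z}_{\textsc{mc}}(n)-\log\e\hat{Z}_{\textsc{mc}}(n)\le n(1+\zeta_{\textsc{mc}}+\epsilon)+o(n)<-cn$ for $n$ large, i.e.\ $\hat{Z}_{\textsc{mc}}(n)/\e\hat{Z}_{\textsc{mc}}(n)<e^{-cn}$.

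For the one-sided concentration, recall from (\ref{eq:logpart_lowerbound_suscept}) that $\log\hat{Z}_{\textsc{mc}}(n)=2(n-1)\log n-(n-1)\log 2+S_n$ with $S_n:=\sum_{k=1}^{n-1}\log\pran{1-\chi(F_k)/n}\le0$, so it suffices to bound $S_n$ from above. I would work in the coupling of Section~\ref{sec:ercoal}, so that $\chi(F_k)=\chi(G^{(n)}_{I_k})$, fix a large constant $A$, and split $S_n=S_n^{\le A}+S_n^{>A}$ according to whether $I_k\le An$; as $S_n^{>A}\le0$, only $S_n^{\le A}$ needs estimating. Here I would invoke the susceptibility concentration of Theorem~\ref{thm:susc_conc}: a deterministic increasing $f:[0,\infty)\to[0,1]$ with $\sup_{0\le m<{n\choose2}}|\chi(G^{(n)}_m)/n-f(m/n)|\to0$ in probability. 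Together with (\ref{eq:prob_to_exp_susc}), which identifies the conditional probability that $e_{m+1}$ is a crossing edge as $(1-\chi(G^{(n)}_m)/n)(1-(n+2m)/n^2)^{-1}=(1-f(m/n))(1+o(1))$ for $m\le An$, a routine Azuma--Hoeffding argument (the relevant martingales, built from the crossing-count process $(\tau_m)_{m\le An}$ and from the truncated sum, have $O(\log n)$-bounded increments over $O(n)$ steps, hence fluctuations $o(n)$ with high probability) reduces $S_n^{\le A}$ to a Riemann sum and yields, with high probability,
\[
S_n^{\le A}\;=\;n\int_0^A(1-f(x))\log(1-f(x))\,\mathrm{d}x\;+\;o(n)\, .
\]
The integrand is nonpositive and integrable on $[0,\infty)$ (since $1-f(x)$ decays exponentially), so the integral over $[0,A]$ decreases to $\int_0^\infty(1-f(x))\log(1-f(x))\,\mathrm{d}x$ as $A\to\infty$, and comparing Theorem~\ref{thm:zmc_lower} with (\ref{eq:logpart_lowerbound_suscept})--(\ref{eq:logsum_rewrite}) shows this last integral equals $\zeta_{\textsc{mc}}+\log2$. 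Thus, given $\epsilon>0$, taking $A$ large enough gives $S_n\le n(\zeta_{\textsc{mc}}+\log2)+\epsilon n+o(n)$ with high probability, whence $\log\hat{Z}_{\textsc{mc}}(n)\le2(n-1)\log n+n\zeta_{\textsc{mc}}+\epsilon n+O(\log n)=\E{\log\hat{Z}_{\textsc{mc}}(n)}+\epsilon n+o(n)$, as required.

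The one genuinely hard input is the susceptibility concentration $\sup_m|\chi(G^{(n)}_m)/n-f(m/n)|\to0$: this is the technical heart of the paper (Theorem~\ref{thm:susc_conc}), and it resists naive bounded-differences arguments because near the critical window a single edge can merge two macroscopic components. A secondary, routine nuisance is the change of time-scale between the coalescent clock $k$ and the Erd\H{o}s--R\'enyi clock $m$ (via $I_k$ and $\tau_m$), together with the check that the summands near the end of the coalescent — where $\chi(F_k)/n\to1$ and the logarithm diverges — contribute only $o(n)$; for the \emph{upper} bound on $S_n$ that we actually use, that last issue evaporates, since those summands are negative.
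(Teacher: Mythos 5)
Your argument is correct in outline, but it takes a genuinely different (and considerably longer) route than the paper does. The paper's own proof of Corollary~\ref{cor:exp_decay} is a short averaging argument: a Markov-type bound shows that if $\hat{Z}_{\textsc{mc}}(n)\ge n^{2n}e^{cn}$ with non-vanishing probability, then $\E{\log\hat{Z}_{\textsc{mc}}(n)}$ must itself exceed roughly $2n\log n+cn$, which is incompatible with Theorem~\ref{thm:zmc_lower} once $c>\zeta_{\textsc{mc}}$; combined with the Stirling estimate $\e\hat{Z}_{\textsc{mc}}(n)=n^{n-2}(n-1)!=n^{2n}e^{-n(1+o(1))}$ and the key numerical fact $\zeta_{\textsc{mc}}<-1$, this yields the corollary directly, with no separate concentration inequality for $\log\hat{Z}_{\textsc{mc}}(n)$. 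You instead re-open the machinery of Section~\ref{sec:final_proof}: you return to the susceptibility sum $S_n=\sum_{k}\log\bigl(1-\chi(F_k)/n\bigr)$, truncate in the Erd\H{o}s--R\'enyi clock at time $An$, invoke Theorem~\ref{thm:susc_conc} for uniform control of $\chi(G^{(n)}_m)/n$, and then layer an Azuma--Hoeffding estimate on top to upgrade the in-expectation Riemann-sum computation to a one-sided high-probability bound. In exchange for this extra work, your route delivers strictly more than the corollary itself requires, namely a genuine one-sided concentration statement $\P{\log\hat{Z}_{\textsc{mc}}(n)\le\E{\log\hat{Z}_{\textsc{mc}}(n)}+\eps n}\to1$, which the paper's averaging argument does not produce and which is of independent interest. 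A few points to tidy up if you flesh this out: over the truncated range $m\le An$ with $A$ a fixed constant, the relevant martingale increments are $O_A(1)$, not $O(\log n)$ (the $\log n$ growth arises only near the end of the coalescent, which you correctly discard as those summands are nonpositive); the value $\int_0^\infty(1-f(x))\log(1-f(x))\,\mathrm dx=\zeta_{\textsc{mc}}+\log 2$ with $f(x)=\alpha(2x)^2$ is read off directly from Proposition~\ref{zmczeta:zmcinformal_prop} via the substitution $\lambda=2x$, rather than by backing it out of Theorem~\ref{thm:zmc_lower}; and the passage from the $G(n,p)$ formulation of Theorem~\ref{thm:susc_conc} to uniform-in-$m$ control of $\chi(G^{(n)}_m)/n$ requires a union bound together with a conversion along the lines of Lemma~\ref{lem:single_exp_term_approx}, which you gesture at but should make explicit.
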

\begin{proof}
Fix $c \in \R$ and suppose that $\p{\hat{Z}_{\textsc{mc}}(n) \ge n^{2n} e^{cn}} > \eps > 0$. Then 
\begin{align*}
& \E{\log\hat{Z}_{\textsc{mc}}(n) - 2n\log n-cn+1} \\
 \ge & \eps \E{\log\hat{Z}_{\textsc{mc}}(n) - 2n\log n+cn+1~|~\hat{Z}_{\textsc{mc}}(n) \ge n^{2n} e^{-cn}} \\
									\ge & \eps. 
\end{align*}
Thus, if $\liminf_{n \to \infty}\p{\hat{Z}_{\textsc{mc}}(n) \ge n^{2n} e^{cn}} > 0$ then for all $n$ large enough, 
\[
\E{\log\hat{Z}_{\textsc{mc}}(n)} > 2n\log n+cn-1\, .
\]
It thus follows from Theorem~\ref{thm:zmc_lower} that for all $\eps > 0$, 
\[
\p{\hat{Z}_{\textsc{mc}}^{\to}(n) \ge n^{2n} e^{(\zeta_{\textsc{mc}}^{\to}+\eps)n}} \to 0
\]
as $n \to \infty$. On the other hand, 
\[
\e{\hat{Z}_{\textsc{mc}}^{\to}(n)}=n^{n-2}(n-1)! = n^{2n}e^{-n(1+o(1))}\, .
\] 
Since $\zeta_{\textsc{mc}} < -1$, the result follows. 
\end{proof} 
The form of the constant $\zeta_{\textsc{mc}}$ is unimportant, though intriguing. 
What is clear from the above is that information about the susceptibility process of the multiplicative coalescent immediately yields control on for $\hat{Z}_{\textsc{mc}}(n)$. The aim of the next section is thus to understand the susceptibility process in more detail. 

\secsub{Bounding $\chi$ using a graph exploration}\label{sec:suscbound}
\addtocontents{toc}{\SkipTocEntry}
The coupling between the ``Version 2'' multiplicative coalescent (Kruskal's algorithm) and the Erd\H{o}s-R\'enyi coalescent from Section~\ref{sec:mult_coal_other_versions} applied to arbitrary exchangeable, distinct edge weights $\mathbf{W}$. In this coupling, for $m \in [{n \choose 2}]$, we took $G^{(n)}_m$ to be the subgraph of $K_n$ consisting of the $m$ edges of smallest $\mathbf{W}$-weight. 

In the current section, it is useful to be more specific. We suppose the entries of $\mathbf{W}$ are iid Uniform$[0,1]$ random variables. 
Write $G(n,p)$ for the graph with vertices $[n]$ and edges $\{e_j: W_{e_j} \le p\}$. In $G(n,p)$, each edge of $K_n$ is independently present with probability $p$. Furthermore, we have $G(n,p)=G^{(n)}_{m_p}$, where $m_p=\max\{i: W_{e_i} \le p\}$, so this also couples the Erd\H{o}s-R\'enyi coalescent with the process $(G(n,p),0 \le p \le 1)$. The next exercise is standard, but important. 
\bsex \label{ex:gnp_cond_dist}
Show that for any $p \in (0,1)$ and $m \in {n \choose 2}$, given that $|E(G(n,p))| = m$, the conditional distribution of $G(n,p)$ is the same as that of $G^{(n)}_m$. 
\esex
For $c > 0$, let $\alpha=\alpha(c)$ be the largest real solution of $e^{-c x}=1-x$. The aim of this section is to prove the following result. 
\begin{thm}\label{thm:susc_conc}
For all $n \ge 1$ and $0 \le p \le n^{-19/20}$, 
\[
\p{|\chi(G(n,p)) - \alpha(np)^2 n| > 22n^{4/5}} < 6ne^{-12n^{1/10}}
\]
\end{thm}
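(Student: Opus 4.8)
The plan is to reduce everything to the sizes of individual components of $G(n,p)$, to analyse those via the breadth-first exploration and its comparison with binomial branching processes, and then to take a union bound over the $n$ choices of starting vertex (which is what the factor $6n$ in the statement reflects).

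First, recall that $\chi(G)=\tfrac1n\sum_{v\in V(G)}|C(v)|$, and by Exercise~\ref{eq:susc_comp_size_bounds} that $L^2/n\le \chi(G)\le L^2/n+S$, where $L$ and $S$ are the sizes of the largest and second-largest components. So it suffices to show (i) $|L-\alpha(np)n|=O(n^{4/5})$ and (ii) $S=O(n^{4/5})$, each with failure probability $O(ne^{-12n^{1/10}})$; then $\chi(G(n,p))=L^2/n+O(n^{4/5})=\alpha(np)^2n+O(n^{4/5})$, with a numerical constant one checks stays below $22$. I would split the range of $p$ into three regimes. When $np\le 1-n^{-1/10}$ (clearly subcritical) one has $\alpha(np)=0$ and a classical first-moment/exploration bound gives $L=O(n^{1/5}\log n)$ with failure probability $ne^{-\Theta(n^{3/5})}$, so $\chi=O(n^{1/5}\log n)$ and the claim is trivial; when $|np-1|<n^{-1/10}$ (the critical window) a crude upper bound $L=O(n^{9/10})$ together with $\alpha(np)^2n=O(n^{4/5})$ makes both $\chi$ and $\alpha(np)^2n$ comparable to $n^{4/5}$, again trivially within $22n^{4/5}$. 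So the real content is the clearly-supercritical regime $1+n^{-1/10}\le np\le n^{1/20}$, where $\alpha(np)n\gg n^{4/5}$.

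Next, the single-vertex estimate. Exploring $C(1)$ by BFS and revealing edges only as needed, the number of vertices found, step by step, is sandwiched between the total progeny of a $\mathrm{Bin}(n,p)$ Galton--Watson process (above) and that of a $\mathrm{Bin}(n-K,p)$ one (below, valid while fewer than $K$ vertices have been found, since at least $n-K$ vertices are always unexplored). Both offspring means are $np+o(1)$, and a $\mathrm{Bin}(m,p)$ branching process with $mp=c$ has survival probability tending to $\alpha(c)$ (its extinction probability solves $q=e^{-c(1-q)}$ in the limit, i.e.\ $1-q=\alpha(c)$) and finite-progeny tail $\p{K<T<\infty}\le e^{-\gamma(c)K}$ with $\gamma(c)$ bounded below by a positive power of $c-1$ on our range. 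Taking $K=n^{4/5}$ and using that $\gamma(np)\cdot n^{4/5}\ge n^{1/10}$ with room to spare when $np\ge 1+n^{-1/10}$, this yields $\p{|C(1)|>K}=\alpha(np)+O(e^{-\Theta(n^{1/10})})$, and more generally that vertex $1$ lies in a component of intermediate size, between $K$ and $\alpha(np)n/2$, with probability at most $e^{-\Theta(n^{1/10})}$. A union bound over the $n$ vertices then shows that with the required probability $G(n,p)$ has a \emph{unique} component of size exceeding $K$, which settles (ii) with $S\le K=n^{4/5}$ and identifies $L=\#\{v:|C(v)|>K\}$.

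It remains to prove concentration of $L=\sum_v\mathbf 1[|C(v)|>K]$ about its mean $n\,\p{|C(1)|>K}=n\alpha(np)+O(ne^{-\Theta(n^{1/10})})$. For this I would run the BFS exploration of the whole graph, component by component, and express its increments through bounded, essentially independent random variables: each step reveals $\mathrm{Bin}(\cdot,p)$ new vertices, and with probability $1-O(ne^{-\Theta(n^{1/10})})$ every such count is at most $np+O(\sqrt{np\,n^{1/10}})$, a quantity much smaller than $K$. On the event that no component of intermediate size exists, a Freedman/Azuma-type inequality applied to the exploration walk forces its crossing time — hence $L$ — to concentrate on the scale $n^{4/5}$ with exponential-in-$n^{1/10}$ tails. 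Collecting the union bounds from the single-vertex step, the "no intermediate component" step, and this martingale step produces the stated failure probability $6ne^{-12n^{1/10}}$.

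The main obstacle is precisely this last step: getting concentration of the giant component size with an exponential-in-$n^{1/10}$ tail and the sharp $O(n^{4/5})$ window, \emph{uniformly} over $1+n^{-1/10}\le np\le n^{1/20}$. Away from criticality this is routine, but as $np$ approaches $1+n^{-1/10}$ the giant is only barely macroscopic, $\gamma(np)$ degenerates, and the branching-process comparison becomes delicate; balancing the exponents there is what pins down the particular thresholds $n^{-19/20}$, $n^{4/5}$ and $n^{1/10}$ in the statement.
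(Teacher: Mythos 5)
Your high-level strategy — reduce to the largest and second-largest component via Exercise~\ref{eq:susc_comp_size_bounds}, split by the value of $np$, and analyse component sizes by exploration — is a reasonable one, but there are two genuine gaps.

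\textbf{The critical window does not close.} You declare the range $|np-1|<n^{-1/10}$ ``trivial'' on the grounds that $L=O(n^{9/10})$ and $\alpha(np)^2n=O(n^{4/5})$. But $O(n^{9/10})$ with an unspecified constant gives $L^2/n=O(n^{4/5})$ with an unspecified constant: if $L\le 10 n^{9/10}$ you only get $\chi\ge L^2/n$ possibly as large as $100n^{4/5}$, which is not within $22n^{4/5}$ of $\alpha^2n\le 4n^{4/5}$. Worse, $\chi\le L^2/n+S$ and you say nothing about $S$ in this window; two components of size $\sim n^{9/10}$ would not contradict the crude bound on $L$ but would blow up $\chi$. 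In reality the bound you need is that $L$ is close to $\alpha(np)n$ (so $L^2/n$ is close to $\alpha^2n$) \emph{and} $S=O(n^{4/5})$ — but establishing those is exactly the supercritical argument you have deferred to the range $np\ge 1+n^{-1/10}$, and the branching-process machinery you invoke there degenerates precisely as $np\downarrow 1$ because $\gamma(np)\to 0$. The paper sidesteps this by putting the split at $np=1+6n^{-1/5}$, which is exactly the point at which all components (including the would-be giant) can still be shown to be at most $21n^{4/5}$ via a single upper-tail estimate, so that $\chi\le 21n^{4/5}$ and $\alpha^2 n\le 144 n^{3/5}$ both fall within the window trivially. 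With your wider critical window this is not available, and you would need to run the full supercritical argument uniformly down to $np=1+6n^{-1/5}$ anyway.

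\textbf{Uniqueness of the large component is not established.} You argue that once you know no vertex lies in a component of size in $[K,\alpha(np)n/2]$, a union bound gives a unique component of size exceeding $K$. That inference is false: two disjoint components each of size $\ge\alpha(np)n/2$ are compatible with ``no intermediate component'' and with the total being $\le n$. You need either a sprinkling argument, or (as the paper does in step~{\sc (III)}) to run the exploration sequentially and observe that after the first large component is fully explored, the graph induced on the remaining $N$ vertices has $Np<1$, so the subcritical bound applies to everything explored afterwards. This sequential/conditional structure is also what lets the paper couple the residual process cleanly; a naive union bound over vertices does not capture it.

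As for method, the paper's route is genuinely different from yours: it uses a depth-first exploration in which, whenever the active queue empties, each undiscovered vertex is inserted independently with probability $p$. This makes $|U_i|$ (undiscovered vertices) a function of $ni$ independent Bernoullis with $\E|U_i|=(n-1)(1-p)^i$, so a single application of McDiarmid's inequality gives the two-sided concentration of the giant's size — no branching-process comparison, no Bin-vs-Poisson error, no separate martingale step for $L$. Your proposed route (Galton--Watson sandwiching plus a Freedman/Azuma bound on the exploration walk) can in principle be made to work over the supercritical range, but it carries more bookkeeping and, as you say yourself, the hardest part — uniform concentration of the giant down to $np=1+o(1)$ on the $n^{4/5}$ scale with $e^{-\Theta(n^{1/10})}$ tails — is left as a sketch. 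In the paper that step is discharged in a few lines by Theorem~\ref{thm:mcd_ineq}, which is what makes the McDiarmid/$|U_i|$ formulation the more economical choice here.
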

The coupling with the Erd\H{o}s-R\'enyi coalescent will allow us to derive corresponding results for $G^{(n)}_m$. While the ingredients for the proof are all in the literature, and closely related results have certainly appeared in many places, we were unable to find a reference for the form we require.  Some of the basic calculations required for the proof appear as exercises; the first such exercise relates to properties of the function $\alpha$. 
\bsex \label{ex:astar_info}
\begin{itemize} 
\item[(a)] Show that $\q$ is continuous and that $\q$ is concave and strictly positive on $(1,\infty)$. 
\item[(b)] Show that for $0 < c \le 1$, $\alpha(c)=0$, and for $c \ge 2$, $1-2e^{-c} \le \alpha(c)  \le 1-e^{-c}$. 
\item[(c)] Show that $\alpha(c)$ is decreasing and $c(1-\alpha(c))$ is decreasing. 
\item[(d)] Show that $\frac{\mathrm{d}}{\mathrm{d}c} \alpha(c) \uparrow 2$ as $c \downarrow 1$. Conclude that 
$2\eps(1- o(1)) \le \alpha(1+\eps) \le 2\eps$, the first inequality holding as $\eps \downarrow 0$.
\item[(e)] Show that $\alpha(c)$ is the survival probability of a Poisson$(c)$ branching process. {\it (This exercise is not used directly.)}
\end{itemize}
\esex

Our proof of Theorem~\ref{thm:susc_conc} hinges on a variant of the well-known and well-used depth-first search exploration procedure. In depth-first search, at each step one vertex is ``explored'': its neighbours are revealed, and those neighbours lying in the undiscovered region of the graph are added to the ``depth-first search queue'' for later exploration. 
In our variant, if the queue is ever empty, in the next step we add each undiscovered vertex to the queue independently with probability $p$. (It is more standard to add a {\em single} undiscovered vertex, but adding randomness turns out to simplify the formula for the expected number of unexplored vertices.) 

We now formally state our search procedure for $G(n,p)$. At step $i$ the vertex set $[n]$ is partitioned into sets $E_i,D_i$ and $U_i$, respectively containing {\em explored, discovered}, and {\em undiscovered} vertices. We always begin with $E_0=\emptyset$, $D_0=\{1\}$, and $U_0=[n]\setminus\{1\}$. 
For a set $S$, we write $\mathrm{Bin}(S,p)$ to denote a random subset of $S$ which contains each element of $S$ independently with probability $p$. For $v \in [n]$ we write $N(v)$ for the neighbours of $v$ in $G(n,p)$. Finally, we define the {\em priority} of a vertex $v \in [n]$ is its time of discovery $\inf\{j: v \in D_j\}$, so vertices that are discovered later have higher priority. 
\begin{mdframed}[style=algorithm]
\noindent {\bf Search process for $G(n,p)$.}
 \\
 {\sc Step }i: \\ 
$\star$~If $D_i\ne \emptyset$ then choose $v \in D_i$ with highest priority (if there is a tie, pick the vertex with smallest label among highest-priority vertices). Let $E_{i+1}=E_i \cup \{v\}$, let $D_{i+1}=(D_i \cup (N(v) \cap U_i))\setminus \{v\}$ and let $U_{i+1}=U_i\setminus(N(v) \cap U_i)$. \\
$\star$~If $D_i = \emptyset$ then let $D_{i+1}=\mathrm{Bin}(U_i,p)$, independently of all previous steps. Let $E_{i+1}=E_i$ and let $U_{i+1}=U_i\setminus D_{i+1}$. 
\end{mdframed}

\medskip
Observe that the sequence $((D_i,E_i,U_i),i \ge 0)$ describing the process may be recovered from either $(D_i,i \ge 0)$ or $(U_i,i \ge 0)$. 
The order of exploration yields the following property of the search process. Suppose $D_i=\emptyset$ for a given $i$. Then $D_{i+1}$ may contain several nodes, all of which have priority $(i+1)$. Starting at step $(i+1)$, the search process will fully explore the component containing the smallest labelled vertex of $D_{i+1}$ before exploring any vertex in any other component. More strongly, the search process will explore the components that intersect $D_{i+1}$ in order of their smallest labeled vertices. 

For $i > 0$ such that $E_{i} \ne E_{i-1}$, write $v_i$ for the unique element of $E_i\setminus E_{i-1}$. Say that a component exploration concludes at time $t$ if $v_{t+1}$ and $v_t$ are in distinct components of $G(n,p)$. The observation of the preceding paragraph implies the following fact about the search process. Set $D_0=\emptyset$ for convenience. 
\begin{fact}\label{fac:d_size}
Fix $t>0$ and let $i=i(t)=\max\{j<t: D_j=\emptyset\}$. If a component exploration concludes at time $t$ then $|D_{i+1}| \ge n-t-|U_t|$. 
\end{fact}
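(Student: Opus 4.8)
The idea is to lean entirely on the ``batch structure'' of the search process recorded just before the statement: between two consecutive times at which $D$ is empty, the search explores the components of $G(n,p)$ one at a time and each in its entirety, processing exactly those components that meet the set produced by the intervening refill, and never touching any other component. Since the desired inequality involves the three blocks of the partition $[n]=E_t\sqcup D_t\sqcup U_t$, I would first rewrite $|D_{i+1}|\ge n-t-|U_t|$ as $|E_t|+|D_t|\le |D_{i+1}|+t$ and then bound $|E_t|$ and $|D_t|$ separately.

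\textbf{Setup and a degenerate case.} If $i(t)=0$ then $D_j\neq\emptyset$ for all $0<j<t$, so the search is still busy exploring the single component $C(1)$; hence $v_t$ and $v_{t+1}$, when defined, both lie in $C(1)$, so no component exploration can conclude at time $t$ and there is nothing to prove. So assume $i\ge 1$. Then $D_i=\emptyset$, step $i$ is a refill with $E_{i+1}=E_i$ and $D_{i+1}=\mathrm{Bin}(U_i,p)$. A component exploration concluding at $t$ forces steps $t-1$ and $t$ both to be explore steps (so that $v_t,v_{t+1}$ are defined), hence $D_{t-1},D_t\neq\emptyset$; together with $D_j\neq\emptyset$ for $i<j<t$ this shows $t\ge i+2$ and that steps $i+1,\dots,t$ all lie in the batch opened by the refill at step $i$.

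\textbf{The two bounds.} The bound on $|E_t|$ is elementary: steps $i+1,\dots,t-1$ are explore steps, each moving exactly one vertex from $D$ into $E$, so $|E_t|=|E_i|+(t-1-i)$, and since only explore steps enlarge $E$ (by one vertex each), $|E_i|\le i$, whence $|E_t|\le t-1$. The bound on $|D_t|$ is the heart of the matter and amounts to $D_t\subseteq D_{i+1}$. To see this, use the batch-structure observation: the components met by $D_{i+1}$ are explored, one at a time and each completely, in some order $C^{(1)},C^{(2)},\dots$. Because $v_t$ and $v_{t+1}$ lie in distinct components and are explored at the consecutive steps $t-1,t$, vertex $v_t$ must be the last vertex of its component to be explored and $v_{t+1}$ the first vertex of the next one; so at step $t$ the components fully explored so far are precisely $C^{(1)},\dots,C^{(m)}$ for some $m$, and no vertex of any later or untouched component has yet been explored. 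Now take $w\in D_t$. Since $w$ is discovered but not explored, $w\notin C^{(1)}\cup\dots\cup C^{(m)}$ (those vertices all lie in $E_t$), so $w$ lies in a component none of whose vertices has been explored. But $w\neq 1$ (vertex $1$ was explored at step $0$), so $w$ was discovered either by a refill or by the exploration of one of its neighbours; the latter is impossible, since such a neighbour lies in $C(w)$ and would have had to be explored. Hence $w$ entered $D$ at the refill at step $i$, i.e.\ $w\in D_{i+1}$.

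\textbf{Conclusion and main obstacle.} Combining, $n=|E_t|+|D_t|+|U_t|\le (t-1)+|D_{i+1}|+|U_t|$, which rearranges to $|D_{i+1}|\ge n-t-|U_t|+1$, slightly stronger than claimed. The only real obstacle is getting the bookkeeping of the batch structure exactly right --- in particular verifying that ``a component exploration concludes at $t$'' genuinely means the search is poised at the very start of a fresh component at step $t$ --- but this is exactly what the priority/exploration-order discussion preceding the statement supplies, so once that is invoked cleanly the rest is routine counting.
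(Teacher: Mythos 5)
Your proof is correct and takes essentially the same route as the paper: bound $|E_t|$, show $D_t\subseteq D_{i+1}$, and combine via the partition $[n]=E_t\sqcup D_t\sqcup U_t$. The paper simply asserts $D_t\subseteq D_{i+1}$ as a consequence of the exploration order and uses $|E_t|\le t$, whereas you spell out the inclusion in detail and sharpen to $|E_t|\le t-1$, yielding the marginally stronger $|D_{i+1}|\ge n-t-|U_t|+1$.
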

\begin{proof}
Since a component exploration concludes at time $t$ we have $D_t \subset D_{i+1}$. Furthermore, $|E_t| \le t$ because $|E_0|=\emptyset$ and $|E_{j+1}\setminus E_j| \le 1$ for all $j \ge 0$. As $D_t$, $E_t$ and $U_t$ partition $[n]$, we thus have 
\[
|U_t| =n-|E_t| - |D_t| \ge n-t-|D_{i+1}|.\qedhere
\]
\end{proof}

In proving Theorem~\ref{thm:susc_conc} we use a concentration inequality due to \citet{mcdiarmid89method}. 
Let $\mathrm{X}=(X_i,1 \le i \le m)$ be independent Bernoulli$(q)$ random variables. Suppose that 
$f:\mathbb\{0,1\}^m$ is such that for all $1 \le k \le m$, for all $(x_1,\ldots,x_k) \in \{0,1\}^k$, 
\[
|\E{f(x_1,\ldots,x_k,X_{k+1},\ldots,X_m)} - \E{f(x_1,\ldots,1-x_k,X_{k+1},\ldots,X_m)}|.
\] 
In other words, given the values of the first $k-1$ variables, knowledge of the $k$'th variable changes the conditional expectation by at most one. 
\begin{thm}[McDiarmid's inequality]\label{thm:mcd_ineq}
Let $\mathrm{X}$ and $f$ be as above. Write $\mu=\E{f(\mathrm{X})}$. Then for $x > 0$, 
\[
\p{f(X) \ge \mu + x} \le e^{-x^2/(2mq + 2x/3)},\quad \p{f(X) \le \mu - t} \le e^{-x^2/(2mq + 2x/3)}\, .
\]
\end{thm}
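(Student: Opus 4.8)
The plan is to establish \refT{thm:mcd_ineq} by the classical Doob-martingale argument underlying the Azuma--Hoeffding inequality, but carrying through the Bernstein-type refinement that exploits the smallness of the Bernoulli parameter $q$; this is exactly what produces the variance proxy $2mq$ rather than $2m$ in the denominator.

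First I would set $\cF_k=\sigma(X_1,\ldots,X_k)$ and form the Doob martingale $Z_k=\E{f(\mathrm X)\mid\cF_k}$, so that $Z_0=\mu$, $Z_m=f(\mathrm X)$, and $f(\mathrm X)-\mu=\sum_{k=1}^m\Delta_k$ with $\Delta_k=Z_k-Z_{k-1}$. Conditionally on $\cF_{k-1}$, the value of $Z_k$ depends on $X_k$ alone: it equals $a_k:=\E{f(X_1,\ldots,X_{k-1},0,X_{k+1},\ldots,X_m)}$ if $X_k=0$ and $b_k:=\E{f(X_1,\ldots,X_{k-1},1,X_{k+1},\ldots,X_m)}$ if $X_k=1$, where $a_k,b_k$ are $\cF_{k-1}$-measurable and $|b_k-a_k|\le 1$ by the bounded-differences hypothesis. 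Since $Z_{k-1}=(1-q)a_k+qb_k$, a one-line computation gives the identity $\Delta_k=c_k(X_k-q)$ with $c_k:=b_k-a_k$, hence $|c_k|\le 1$ and $c_k$ is a function of $X_1,\ldots,X_{k-1}$ only.

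The key step is a deterministic bound on the conditional moment generating function. For $\lambda>0$, independence of $X_k$ from $\cF_{k-1}$ yields
\[
\E{e^{\lambda\Delta_k}\mid\cF_{k-1}}=e^{-\lambda c_kq}\pran{1-q+qe^{\lambda c_k}}\le\exp\pran{q\pran{e^{\lambda c_k}-1-\lambda c_k}},
\]
using $\log(1+u)\le u$ with $u=q(e^{\lambda c_k}-1)$. Because $s\mapsto e^s-1-s$ is convex and nonnegative and, for $\lambda>0$, attains its maximum over $s\in[-\lambda,\lambda]$ at $s=\lambda$, and $|c_k|\le1$, the right-hand side is at most $\exp(q(e^\lambda-1-\lambda))$. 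Iterating over $k$ via the tower property gives
\[
\E{e^{\lambda(f(\mathrm X)-\mu)}}\le\exp\pran{mq\pran{e^\lambda-1-\lambda}}.
\]

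Finally I would optimize over $\lambda$. Markov's inequality gives $\p{f(\mathrm X)\ge\mu+x}\le\exp(-\lambda x+v(e^\lambda-1-\lambda))$ with $v:=mq$; taking $\lambda=\log(1+x/v)$ produces the Bennett bound $\p{f(\mathrm X)\ge\mu+x}\le\exp(-v\,[(1+x/v)\log(1+x/v)-x/v])$, and the elementary inequality $(1+u)\log(1+u)-u\ge u^2/(2+2u/3)$ for $u\ge0$ (both sides vanish at $u=0$, then compare derivatives) converts this into the stated $\exp(-x^2/(2mq+2x/3))$. The lower tail follows by applying the upper-tail bound to $-f$, which has the same bounded-differences property and mean $-\mu$, since $\p{f(\mathrm X)\le\mu-x}=\p{-f(\mathrm X)\ge-\mu+x}$. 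I expect the only genuinely delicate points to be the identity $\Delta_k=c_k(X_k-q)$ with $|c_k|\le1$ and the passage from the Bennett to the Bernstein form; the martingale bookkeeping itself is routine.
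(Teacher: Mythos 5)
The paper does not prove Theorem~\ref{thm:mcd_ineq} at all: it is quoted from McDiarmid (1989) and used as a black box, so there is no internal proof to compare against. Your argument is a correct, self-contained derivation. The decomposition $\Delta_k=c_k(X_k-q)$ with $c_k$ being $\cF_{k-1}$-measurable and $|c_k|\le 1$ is exactly what the paper's (slightly garbled) bounded-differences hypothesis provides; the conditional MGF bound $\exp\bigl(q(e^{\lambda c_k}-1-\lambda c_k)\bigr)$, the reduction to $\exp\bigl(q(e^{\lambda}-1-\lambda)\bigr)$ via convexity of $s\mapsto e^s-1-s$ on $[-\lambda,\lambda]$, the Chernoff optimization at $\lambda=\log(1+x/v)$, and the elementary inequality $(1+u)\log(1+u)-u\ge u^2/(2+2u/3)$ together give precisely the stated Bernstein-form tail $\exp(-x^2/(2mq+2x/3))$, and the lower tail follows from applying the bound to $-f$ as you say. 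This is the standard martingale--Bennett--Bernstein route and matches the form of the inequality the paper actually invokes (with variance proxy $mq$ rather than the Azuma-type $m$).
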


Our probabilistic analysis of the search process begins with the following observation. For each $i \ge 0$, the set $U_{i}\setminus U_{i+1}=D_{i+1}\setminus D_i$ of vertices discovered at step $i$ has law $\mathrm{Bin}(U_i,p)$. 
This observation also allows us to couple the search process with a family $\mathrm{B}=(B_{i,j},i \ge 1,j \ge 1)$ of iid Bernoulli$(p)$ random variables, by inductively letting $U_{i}\setminus U_{i+1} = D_{i+1}\setminus D_{i}$ equal $\{j \in U_{i}: B_{i,j}=1\}$, for each $i \ge 1$. 
The coupling shows that for all $i \ge 1$, $U_i$ satisfies the hypotheses of Theorem~\ref{thm:mcd_ineq}, with $m=ni$ and $q=p$. 
Also, using the preceding coupling, the next exercise is an easy calculation. 
\bsex \label{ex:poisson_identity}
Show that for $i\ge 0$, $\E{|U_{i+1}|~|~(U_j,j \le i)} = |U_{i}|(1-p)$; conclude that $\e |U_i|=(n-1)(1-p)^{i}$ for all $i\ge 0$. 
\esex
The exploration of the component $C(1)$ is completed precisely at the first time $j$ that $D_j=\emptyset$; this is also the first time $j$ that $|U_j|= n-j$, and for earlier times $k$ we have $U_k < n-k$. If we had $|U_i|=\E|U_i|$ for all $i$ then the above exercise would imply that $|C(1)| = \min\{t \in \N: (n-1)(1-p)^t \ge n-t\}$. Of course, $|U_i|$ does not equal $\e|U_i|$ for all $i$. However, $|U_i|$ does track its expectation closely enough that a consideration of the expectation yields an accurate prediction of the first-order behaviour of $\chi(G(n,p))$. We next explain this in more detail, then proceed to the proof of Theorem~\ref{thm:susc_conc}. Write $t(n,p)$ for the largest real solution of $n(1-p)^x=n-x$. We will use the next exercise, the first part of which which gives an idea of how $t(n,p)$ behaves when $p$ is moderately small. 
\bsex \label{ex:tstar_info}
\begin{itemize} 
\item[(a)] Show that $t(n,p) = n \cdot \alpha(n \log(1/(1-p)))$. Conclude that if $p \le n^{-3/4}$ then with $c=np$, the largest real solution $t=t(n,p)$ of $n(1-p)^x = n-x$ satisfies 
\[
\alpha(c) n \le t \le \alpha(c) n + \frac{2n^{1/2}}{1-p}.
\] 
({\bf Hint.} Use Exercise~\ref{ex:astar_info}~(d).)
\item[(b)] 
Show that 
\begin{align*}
n(1-p)^s & \ge (n-s) + (s-t)(1+(n-t)\log(1-p))\mbox{ for }s > t\, 
\end{align*}
\end{itemize}
\esex
Write $L$ and $S$ for the sizes of the largest and second largest components of $G(n,p)$, respectively. From time $0$ to time $t=t(n,p)$, the search process essentially explores a single component. We thus expect that $L \ge t$. Next, since $n(1-p)^{t+1} > n-(t+1)$ and $n(1-p)^t = n-t$, by the convexity of $(1-p)^s$ we have $n(1-p)^{s+1} \ge n(1-p)^s-1$ for all $s \ge t$. Exercise~\ref{ex:poisson_identity} then implies that $\e|U_{s+1}| \ge \e|U_s|-1$ for all integer $s \ge t$. In other words, when exploring a component after time $t$, the search process on average discovers less than one new vertex in each step. Such an exploration should quickly die out and, indeed, after time $t$ the components uncovered by the search process typically all have size $o(n)$. Together with the first point, this suggests that $L \le t+o(n)$ and $S=o(n)$. Using the bounds on $t$ from Exercise~\ref{ex:tstar_info}~(a) and the bounds on $\chi(G)$ from Exercise~\ref{eq:susc_comp_size_bounds}, we are led to predict that 
\[
\alpha(np)^2 n + o(n) = \frac{L^2}{n} \le \chi(G(n,p)) \le \frac{L^2}{n} + S = \alpha(np)^2 n + o(n). 
\]
Theorem~\ref{thm:susc_conc} formalizes and sharpens this prediction, and we now proceed to its proof. 
\begin{proof}[Proof of Theorem~\ref{thm:susc_conc}] 
Throughout the proof we assume $n$ is large (which is required for some of the inequalities), and write $t=t(n,p)$, $\alpha=\alpha(np)$. 

\medskip
\noindent \fbox{{\bf Case 1: $p \le 1/n+6/n^{6/5}$ (``subcritical $p$'').}}\\

Recall that exploration of $C(1)$ concludes the first time $i$ that $|U_i| \ge n-i$. 
Letting $t^+=21 n^{4/5}$, we have $(1-p)^{t^+} \ge 1-{t^+}p+({t^+}p)^2/2-({t^+}p)^3/6> 1-{t^+}p+({t^+}p)^2/3$, and it follows straightforwardly that 
\[ 
\e{|U_{t^+}|}= n(1-p)^{t^+} \ge n(1-{t^+}p + ({t^+}p)^2/3) \ge n-{t^+} + 3 n^{3/5}. 
\] 
Applying the lower bound from Theorem~\ref{thm:mcd_ineq} to $|U_t|$, it follows that 
\[ 
\p{|U_{t^+}| \le n-{t^+}} \le \p{|U_{t^+}| \le \e|U_{t^+}|-3n^{3/5}} \le e^{-(9/2)n^{1/5}}\, . 
\] 
At all times $i$ before exploration of the first component concludes we have $|U_i| < n-i$, so the preceding bound yields 
\[ 
\p{|C(1)| \ge 21 n^{4/5}} \le e^{-(9/2)n^{1/5}},  
\] 
We always have $\chi(G(n,p)) \le \max_{i \in [n]} |C(i)|$ so, by a union bound, 
\[
\p{\chi(G(n,p)) \ge 21n^{4/5}} \le \p{\max_{i \in [n]} |C(i)| \ge 21n^{4/5}} \le ne^{-(9/2)n^{1/5}}\, .
\]
For this range of $p$ we also have $n\alpha(np) \le 12n^{4/5}$, and so the bound in Theorem~\ref{thm:susc_conc} follows. 

\medskip
\noindent\fbox{\noindent {\bf Case 2: $1/n + 6/n^{6/5} < p \le 1/n^{19/20}$ (``supercritical $p$'').}}\\

We begin by explaining the steps of the proof. {\sc (I)} First, logic similar to that in case $1$ shows that the largest component of $G(n,p)$ is unlikely to have size much larger than $t$. {\sc (II)} Next, we need a corresponding {\em lower} tail bound on the size of the largest component; the proof of this relies on Fact~\ref{fac:d_size}. {\sc (III)} Finally, we need to know that with high probability there is only one component of large size; after ruling out one or two potential pathologies, this follows from the subcritical case. We treat the three steps in this order. Write $\Delta=n^{3/4}$ and $t^{\pm}=t(n,p)\pm \Delta$. 

\medskip

\noindent {\sc (I)} 
We claim that 
\begin{equation}\label{eq:nptplusbound}
n(1-p)^{t^+} \ge n-t^+ + 5n^{11/20}. 
\end{equation}
To see this, first use Exercise~\ref{ex:tstar_info}~(b) to obtain 
\[
n(1-p)^{t^+} \ge n-t^+ + \Delta (1+(n-t)\log(1-p))\, .
\]
Let $c=n \log(1/(1-p))$. By Exercise~\ref{ex:tstar_info}~(a), 
\[1+(n-t)\log(1-p) = 1-c(1-\alpha(c))\, .
\]
Next, as $p \ge 1/n+6/n^{6/5}$ we have $n \log(1/1-p) \ge np \ge 1+6/n^{1/5} =: c^*$. 
By Exercise~\ref{ex:astar_info}~(c) and (d), 
it follows that 
\[
c(1-\alpha(c)) \le c^*(1-\alpha(c^*)) = \left(1+\frac{6}{n^{1/5}})\right)\left(1-\frac{(2+o(1))6}{n^{1/5}}\right) \le 1-\frac{5}{n^{1/5}}\, ,
\]
so $1+(n-t)\log(1-p) \ge 5/n^{1/5}$. (Similar bounds using Exercise~\ref{ex:astar_info}~(c) and (d) crop up again later in the proof). 
Since $\Delta/n^{1/5}=n^{11/20}$, (\ref{eq:nptplusbound}) follows. 
Having established (\ref{eq:nptplusbound}), essentially the same logic as in Case 1 yields 
\begin{equation}\label{eq:tplus_bound}
\p{\max_{i \in [n]} |C(i)| \ge t^+} \le n\p{|C(1)| \ge t^+} \le ne^{-(25/2) n^{1/10}}\, .
\end{equation}

\medskip

\noindent {\sc (II)} We now turn to the lower tail of $\max_{i \in [n]} |C(i)|$. 
The calculations are similar but slightly more involved. 
Since $p=o(1)$ and $p\Delta \le n^{-1/5}=o(1)$, for $n$ large $(1-p)^{-\Delta} \le 1+p\Delta+(p\Delta)^2$, so 
\begin{equation}\label{eq:ntminus_bound}
n(1-p)^t{^-} = (n-t)(1-p)^{-\Delta} \le (n-t)(1+p\Delta+(p\Delta)^2). 
\end{equation}
Since $t \ge n\alpha-n^{1/2}$, it follows easily from Exercises~\ref{ex:astar_info}~(c) and~(d) that $p(n-t) \le 1-\frac{5}{n^{1/5}}$. 
Using (\ref{eq:ntminus_bound}) and the bound $p\Delta \le n^{-1/5}$, we thus have 
\begin{align}
n(1-p)^{t^-} & 
\le (n-t) + \Delta\left(1-\frac{5}{n^{1/5}}\right)(1+p\Delta) \nonumber\\
& \le n-t^- - \frac{4\Delta}{n^{1/5}} \nonumber\\
& = n-t^--4n^{3/5}. \label{eq:tminus_bound} 
\end{align}

Next, basic arithmetic shows that if $m \ge (n+p^{-1})/2$ then $m(1-p) \le m-1-(np-1)/2 \le m-(1+6n^{-1/5})$. 
Furthermore, for $p$ in the range under consideration, $(n+p^{-1})/2 \le n-2n^{4/5}$, so 
\[
n(1-p)^{\Delta} \le \max(n-2n^{4/5},n-\Delta - 6\Delta n^{-1/5}) =  n-\Delta-6n^{11/20}\, .
\]
Since $n(1-p)^t$ is concave as a function of $t$, this bound and (\ref{eq:tminus_bound}) together imply that 
$n(1-p)^x \le n-x-6n^{11/20}$ for all $x \in [\Delta,t^-]$. Applying Theorem~\ref{thm:mcd_ineq} for $t \in [\Delta,t^-]$, and a union bound, yields
\[
\p{\exists t \in [\Delta,t^-]: |U_t| \ge n-t-n^{11/20}} \le (t^--\Delta)e^{-(25/2)n^{1/10}}\, .
\]

Now suppose that $|U_t| < n-t-n^{11/20}$ for all $t \in [\Delta,t^-]$. In this case, if a component exploration concludes at some time $t \in [\Delta,t^-]$ then by Fact~\ref{fac:d_size} there is $i < t$ such that $D_i=\emptyset$ and $|D_{i+1}| > n-t-|U_t| > n^{11/20}$. On the other hand, for all $i \ge 0$, $|D_{i+1}\setminus D_i|$ is stochastically dominated by $\mathrm{Bin}(n,p)$, so by a union bound followed by a Chernoff bound (or an application of Theorem~\ref{thm:mcd_ineq}),  
\[
\p{\exists i < t^-: D_i=\emptyset,|D_{i+1}| > n^{11/20}} \le t^- \p{\mathrm{Bin}(n,p) > n^{11/20}} \le t^- e^{-n^{11/20}}\, .
\]
It follows that 
\begin{align}
& \p{\mbox{A component exploration concludes between times $\Delta$ and $t^-$}}\nonumber \\
&\le (t^--\Delta)e^{-(25/2)n^{1/10}} + t^- e^{-n^{11/20}} \nonumber\\
& \le 2ne^{-(25/2)n^{1/10}}\, . \label{eq:supercrit_lower_bound}
\end{align}

\medskip
\noindent {\sc (III)} 
Let $N$ be the number of vertices remaining when the first time after time $t^-$ that the search process finishes exploring a component, 
and write $B$ for the event that some component whose exploration starts {\em after} time $t^-$ has size greater than $21n^{4/5}$. 
Then 
\begin{align*}
\p{B} & \le \p{B, N > n-(t^--\Delta)} + \sum_{m \le n-(t^--\Delta)} \p{B,N=m} \\
 \le & \p{N > n-(t^--\Delta)} + \sup_{m \le n-(t^--\Delta)} \p{B~|~N=m}\, .
\end{align*}
The first probability is at most $2ne^{-(25/2)n^{1/10}}$ by (\ref{eq:supercrit_lower_bound}). 
To bound the second, note that 
\[
n-(t^--\Delta) \le n+2n^{3/4}-t \le n+3n^{3/4}-n\alpha(np) = n(1-\alpha(np)+3n^{-1/4}). 
\]
By Exercise~\ref{ex:astar_info}~(c) and~(d), and since $p \le n^{-19/20}$, for $m \le n-(t^--\Delta)$ we therefore have 
\[
mp \le np(1-\alpha(np))+3n^{3/4}p < 1.
\]
For such $m$, the bound for ``subcritical $p$'' from Case 1 thus yields
\[
\p{B~|~N=m} \le me^{-(9/2) m^{1/5}}. 
\]
This is less than $ne^{-(9/2) n^{4/25}}$ for $m \ge n^{4/5}$. If $m \le n^{4/5}$ then the largest component explored after time $m$ also has size $\le n^{4/5}$,  so $\p{B~|~N=m}=0$. We conclude that 
\begin{equation}\label{eq:bound_after_giant_done}
\p{B}\le 2ne^{-(25/2)n^{1/10}} + ne^{-(9/2) n^{4/25}} \le 3ne^{-(25/2)n^{1/10}}\, .
\end{equation}

\medskip
\noindent {\sc (IV)} Now to put the the pieces together. The lower bound is easier: by (\ref{eq:supercrit_lower_bound}) and the first inequality from Exercise~\ref{eq:susc_comp_size_bounds}, 
inequality $\chi(G(n,p)) \ge n^{-1} \max_{i \in [n]} |C(i)|^2$, 
\begin{equation}\label{eq:thmbound_lower}
\p{\chi(G(n,p)) < \frac{(t^--\Delta)^2}{n}} \le 2ne^{-(25/2)n^{1/10}}\, , 
\end{equation}
and by Exercise~\ref{ex:tstar_info}~(a), 
\[
\frac{(t^--\Delta)^2}{n} = \frac{(t-2n^{3/4})^2}{n} \ge \frac{(n\alpha-3n^{3/4})^2}{n} \ge n\alpha^2 - 9n^{1/2}\,. 
\]

For the upper bound, any component of $G(n,p)$ whose exploration concludes before step $n^{3/4}$ of the search process has size at most $n^{3/4}$. Write $S$ for the number of vertices of the second-largest component of $G(n,p)$. By (\ref{eq:bound_after_giant_done}), we then have 
\[
\p{S \ge 21 n^{4/5}} \le 3ne^{-(25/2)n^{1/10}}\, .
\]
Combined with the second inequality from Exercise~\ref{eq:susc_comp_size_bounds} 
and with (\ref{eq:tplus_bound}), we obtain 
\begin{equation}\label{eq:thmbound_upper}
\p{\chi(G(n,p)) \ge \frac{(t+n^{3/4})^2}{n} + 21n^{4/5}} \le 4ne^{-(25/2)n^{1/10}}. 
\end{equation}
An easy calculation using Exercise~\ref{ex:tstar_info}~(a) shows that $(t+n^{3/4})^2/n + 21 n^{4/5} \le n\alpha^2 + 22 n^{4/5}$, and the theorem then follows from (\ref{eq:thmbound_lower}) and (\ref{eq:thmbound_upper}). 
\end{proof}
To conclude the section, we use Theorem~\ref{thm:susc_conc} to show that $\E{\chi(G^{(n)}_m)}$ is well-approximated by $\alpha(2m/n)$ in a range which covers the most important values of $m$. (Exercise~\ref{ex:connect_threshold}, below, extends this to all $0 \le m \le {n \choose 2}$.)
\begin{lem}\label{lem:susc_exp_conc}
For $n$ large, for all $m \le n^{10/9}$, 
\[\left|\E{\chi(G^{(n)}_m)} - \alpha^2(2m/n)n\right| \le 23 n^{4/5}.
\] 
\end{lem}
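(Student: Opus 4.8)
The plan is to transfer Theorem~\ref{thm:susc_conc} (concentration of $\chi$ for the fixed-density graph $G(n,p)$) to the fixed-edge-count graph $G^{(n)}_m$, by sandwiching $G^{(n)}_m$ between two copies $G(n,p^-)\subseteq G^{(n)}_m\subseteq G(n,p^+)$ whose expected numbers of edges straddle $m$; this uses Exercise~\ref{ex:gnp_cond_dist} (the coupling) together with the fact that $\chi$ is monotone along the coalescent. A separate, softer argument is needed for the largest values of $m$, for which the relevant density lies outside the range $p\le n^{-19/20}$ of Theorem~\ref{thm:susc_conc}.

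First I would record two trivialities. (i) Along the Erd\H{o}s--R\'enyi coalescent, adding an edge either leaves the components unchanged or merges two of them, and merging strictly increases $\sum_C|C|^2$; hence $m\mapsto\chi(G^{(n)}_m)$ is nondecreasing pointwise, and so $m\mapsto\E{\chi(G^{(n)}_m)}$ is nondecreasing. (ii) For any $n$-vertex graph $G$, $\chi(G)\le n$, so unlikely events can be discarded at cost $O(n)$. Now fix $m\le n^{10/9}$, set $\Delta=n^{2/3}$, and put $p^\pm=(m\pm\Delta)/\binom{n}{2}$, so that in the coupling of the coalescent with $(G(n,p),0\le p\le1)$ of Section~\ref{sec:suscbound} one has $G(n,p^\pm)=G^{(n)}_{N^\pm}$ with $N^\pm=|E(G(n,p^\pm))|\sim\mathrm{Bin}(\binom{n}{2},p^\pm)$ of mean $m\pm\Delta$. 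Since $\binom{n}{2}p^\pm\le 2n^{10/9}$, a Chernoff bound gives $\p{|N^\pm-(m\pm\Delta)|\ge\Delta}\le 2e^{-n^{2/9}/5}$, which beats every power of $n$. On the complementary high-probability event $\{N^-\le m\le N^+\}$ we have $G(n,p^-)\subseteq G^{(n)}_m\subseteq G(n,p^+)$ and hence the same ordering of $\chi$, so together with (ii),
\[
\E{\chi(G(n,p^-))}-2ne^{-n^{2/9}/5}\le\E{\chi(G^{(n)}_m)}\le\E{\chi(G(n,p^+))}+2ne^{-n^{2/9}/5}.
\]

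For $m\le\tfrac14 n^{21/20}$ one checks $p^\pm\le n^{-19/20}$ (for $n$ large), so Theorem~\ref{thm:susc_conc} applies to $G(n,p^\pm)$; integrating its tail bound (using (ii) again) yields $|\E{\chi(G(n,p^\pm))}-\alpha(np^\pm)^2 n|\le 22n^{4/5}+6n^2 e^{-12n^{1/10}}$. Since $np^\pm=2(m\pm\Delta)/(n-1)$ one has $|np^\pm-2m/n|\le 4\Delta/n$ for $n$ large, and $\alpha$ is $2$-Lipschitz by Exercise~\ref{ex:astar_info}~(a),(d) (so $0\le\alpha'\le2$), whence $|\alpha(np^\pm)^2 n-\alpha(2m/n)^2 n|\le 16\Delta=16n^{2/3}$; feeding these into the display bounds $|\E{\chi(G^{(n)}_m)}-\alpha(2m/n)^2 n|$ by $22n^{4/5}+16n^{2/3}+(\text{super-polynomially small})$, which is $\le 23n^{4/5}$ for $n$ large. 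For $\tfrac14 n^{21/20}<m\le n^{10/9}$, instead $c:=2m/n>\tfrac12 n^{1/20}$ is huge, so Exercise~\ref{ex:astar_info}~(b) gives $0\le n-\alpha(c)^2 n\le 4ne^{-c}\le1$; and $\chi(G^{(n)}_m)=n$ unless $G^{(n)}_m$ is disconnected, while a union bound over isolated vertices (each isolated with probability $\le(1-2/n)^m\le e^{-2m/n}$), which dominates in this regime, gives $\p{G^{(n)}_m\text{ disconnected}}\le 2ne^{-c}\le 1/n$ for $n$ large. Hence $n-1\le\E{\chi(G^{(n)}_m)}\le n$, and $|\E{\chi(G^{(n)}_m)}-\alpha(2m/n)^2 n|\le 2\le 23n^{4/5}$.

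The main obstacle is calibrating $\Delta$: it must be large enough that the binomial $|E(G(n,p^\pm))|$ concentrates within $\pm\Delta$ of its mean \emph{even when $m$ is as large as $n^{10/9}$} --- forcing $\Delta$ well above $\sqrt m$, i.e.\ above $n^{5/9}$ --- yet small enough that the perturbation $16\Delta$ it induces in $\alpha(2m/n)^2 n$ stays inside the $23n^{4/5}$ budget --- forcing $\Delta$ well below $n^{4/5}$; $\Delta=n^{2/3}$ threads the needle. The only other subtlety is that the density corresponding to $m$ of order $n^{10/9}$ exceeds the range in which Theorem~\ref{thm:susc_conc} holds, which is precisely why the second regime must be treated by connectivity instead --- harmlessly, since there $2m/n$ dwarfs the connectivity threshold $\tfrac12\log n$.
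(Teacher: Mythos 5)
Your argument is correct and follows essentially the same route as the paper's proof: sandwich $G^{(n)}_m$ between $G(n,p^-)\subseteq G^{(n)}_m\subseteq G(n,p^+)$ using the coupling, concentrate the binomial edge count, then combine Theorem~\ref{thm:susc_conc} with the $2$-Lipschitz property of $\alpha$. Even the parameter scale coincides: your $\Delta=n^{2/3}$ in edge count is the same as the paper's probability offset $\delta=n^{-4/3}$ (since $\delta\binom n2\approx n^{2/3}/2$). The one genuine point of divergence is that you noticed, and repaired, a gap in the stated range: for $m$ between roughly $n^{21/20}/2$ and $n^{10/9}$ one has $p_m^+>n^{-19/20}$, so Theorem~\ref{thm:susc_conc} does not apply to $G(n,p_m^\pm)$, and the paper's proof as written does not address this. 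Your second regime --- observing that there $c=2m/n\ge\tfrac12 n^{1/20}$ forces $\alpha(c)^2n\ge n-1$ and $G^{(n)}_m$ is connected with overwhelming probability --- handles exactly this range and is the right fix; the paper itself signals awareness of this kind of argument by delegating it to Exercise~\ref{ex:connect_threshold}(c).

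One small place where you should tighten the writing: in the large-$m$ regime you bound $\p{G^{(n)}_m\text{ disconnected}}\le 2ne^{-c}$ by a union bound over isolated vertices and the assertion that isolated vertices ``dominate.'' That domination is true but is itself a (mild) theorem, not a triviality, and you don't supply it. You don't actually need a bound this sharp: $m>\tfrac14 n^{21/20}\gg 5n\log n$, so monotonicity of connectedness plus Exercise~\ref{ex:connect_threshold}(b) already gives $\p{G^{(n)}_m\text{ disconnected}}\le n^{-4}$, which is all you use. Alternatively, Exercise~\ref{ex:connect_threshold}(a) gives $\p{N_k>0}\le(ne^{-p(n-k)})^k$, and summing over $k\le n/2$ with $p=m/\binom n2$ yields a rigorous bound of the form you assert. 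Either replacement closes the argument cleanly.
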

\begin{proof}
Write $x_m = \inf\{p: |E(G(n,p))|=m\}$. Since $\e{|E(G(n,p))|}=p{n \choose 2}$, we expect $x_m$ to be near $p_m := m/{n \choose 2}$. Write $\hat{\alpha}=\alpha(2m/(n-1))=\alpha(np_m)$, let $\delta = n^{-4/3}$, and let $p_m^{\pm}=p_m \pm \delta$. 

In the coupling of $(G(n,p),0 \le p \le 1)$ and $(G^{(n)}_m, 0 \le m \le {n \choose 2})$, if $x_m > p_m^-$ then $G(n,p_m^-)$ is a subgraph of $G^{(n)}_m$ and so $\chi(G^{(n)}_m) \ge \chi(G(n,p_m^-))$. Likewise, 
if $x_m < p_m^+$ then $\chi(G^{(n)}_m) \le \chi(G(n,p_m^.))$
We thus have 
\begin{align*}
\chi(G^{(n)}_m) & \ge \chi(G(n,p_m^-)) \I{x_m > p_m^-} \\
			& \ge \chi(G(n,p_m^-)) - n \I{x_m \le p_m^-}, \quad \mbox{ and}\\
\chi(G^{(n)}_m) & \le \chi(G(n,p_m^+)) \I{x_m < p_m^+} + n\I{x_m \ge p_m^+} \\
			& \le \chi(G(n,p_m^+)) + n\I{x_m \ge p_m^+}. 
\end{align*} 
Since $\alpha$ is $2$-Lipschitz, $\hat{\alpha}-2/n^{1/3} \le \alpha (np_m^-) \le \alpha(np_m^+) \le \hat{\alpha} + 2/n^{1/3}$, from which it follows that 
both $\alpha(np_m^-)^2n$ and $\alpha(np_m^+)^2n$ are within $5 n^{2/3}$ of $\hat{\alpha}^2 n$. By the preceding lower bound on $\chi(G^{(n)}_m)$ and Theorem~\ref{thm:susc_conc} we thus have 
\begin{align*}
\E{\chi(G^{(n)}_m)} & \ge \E{\chi(G(n,p_m^-))} - n \P{x_m \le p_m^-} \\
			& \ge \hat{\alpha}^2 n -5 n^{2/3}-22n^{4/5} - n \P{\mathrm{Bin}\left({n \choose 2},p_m^-\right) \ge m} \\
			& \ge \hat{\alpha}^2 n -5 n^{2/3}-22n^{4/5} - 1\, 
\end{align*}
the last inequality holding straightforwardly by a Chernoff bound (note that ${n \choose 2}p_m^-=m-(n-1)/(2n^{1/3}) \le m-m^{3/5}/3$). 
We likewise have 
\begin{align*}
\E{\chi(G^{(n)}_m)}	& \le \hat{\alpha}^2 n+ 5 n^{2/3}+22n^{4/5} + 1. 
\end{align*}
Finally, $2m/(n-1)-2m/n = 2m/(n(n-1)) =O(n^{-8/9})$, so since $\alpha$ is $2$-Lipschitz we have $\alpha(2m/n)^2 =\hat{\alpha}^2+O(n^{-8/9})$, and the result follows. 
\end{proof}
\iftoggle{bookversion}{
In the next section, ... Frieze limit

\secsub{Frieze's $\zeta(3)$ limit for the MST weight}}
{
\section{Frieze's $\zeta(3)$ limit for the MST weight}}

\label{sec:frieze_lim}
Before proving Theorem~\ref{thm:zmc_lower}, we warm up by using the same approach to study the total weight of random MSTs. Throughout the section,  $\mathbf{W}=(W_e,e \in E(K_n))$ are exchangeable, distinct, non-negative edge weights. Recall from Section~\ref{sec:mult_coal_other_versions} that ``Version 2'' of the multiplicative coalescent (aka Kruskal's algorithm) considers edges one-by-one in increasing order of weight, adding only edges which connect distinct trees in the forest, and that the result is the minimum spanning tree $T=\mathrm{MST}(K_n,\mathbf{W})$. 

Write $w(T) = \sum_{e \in E(T)} W_e$ for the total weight of $T$. We use susceptibility bounds to approximate $w(T)$ and derive a version of Frieze's famous $\zeta(3)$ limit. 
\begin{thm}[\citet{frieze85mst}]\label{thm:frieze}
Write $X_1,\ldots,X_{n \choose 2}$ for the increasing ordering of $\mathbf{W}$. If $\e{X_m}=(1+o(1))m$, then $2\e{w(MST(K_n,\mathbf{W}))}/n^2 \to \zeta(3)$ as $n \to \infty$. 
\end{thm}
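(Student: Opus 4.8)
The plan is to evaluate $\E{w(T)}$ edge by edge, using Kruskal's algorithm (``Version~2'' of the multiplicative coalescent, Section~\ref{sec:mult_coal_other_versions}) together with the susceptibility estimates of Theorem~\ref{thm:susc_conc} and Lemma~\ref{lem:susc_exp_conc}. Write $N=\binom{n}{2}$ and order $E(K_n)$ so that $W_{e_1}<\cdots<W_{e_N}$; then $W_{e_m}=X_m$ and $w(T)=\sum_{m=1}^N X_m\I{e_m\in E(T)}$. By exchangeability of $\mathbf W$ (Section~\ref{sec:mult_coal_other_versions}), conditionally on the ordered values $\mathbf X=(X_1,\dots,X_N)$ the sequence $(e_1,\dots,e_N)$ is a uniformly random permutation of $E(K_n)$, independent of $\mathbf X$; hence $(G^{(n)}_m,0\le m\le N)$, with $E(G^{(n)}_m)=\{e_1,\dots,e_m\}$, is an Erd\H{o}s--R\'enyi coalescent independent of $\mathbf X$. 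In Kruskal's algorithm $e_m\in E(T)$ exactly when $e_m$ joins two components of $G^{(n)}_{m-1}$, i.e.\ $|\mathcal C(G^{(n)}_m)|=|\mathcal C(G^{(n)}_{m-1})|-1$; since this event depends only on $(e_1,\dots,e_m)$, its conditional probability given $\mathbf X$ is the deterministic number $q_{m-1}:=\E{|\mathcal C(G^{(n)}_{m-1})|}-\E{|\mathcal C(G^{(n)}_m)|}\in[0,1]$. Conditioning on $\mathbf X$ therefore gives
\[
\E{w(T)}=\sum_{m=1}^N q_{m-1}\,\E{X_m},
\]
and since $q_{m-1}\ge 0$ and $\E{X_m}=(1+o(1))m$ uniformly in $m$, the right side equals $(1+o(1))\sum_{m=1}^N m\,q_{m-1}$ provided $\sum_m m\,q_{m-1}=O(n^2)$, which the remaining steps establish.

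The edge count behind Proposition~\ref{prop:empirical_mc_identity} (a uniformly random non-edge of $G^{(n)}_m$ joins distinct components with probability $\bigl(1-\tfrac1n\chi(G^{(n)}_m)\bigr)\bigl(1-\tfrac{n+2m}{n^2}\bigr)^{-1}$; see (\ref{eq:prob_to_exp_susc})) yields
\[
q_m=\Bigl(1-\tfrac1n\E{\chi(G^{(n)}_m)}\Bigr)\Bigl(1-\tfrac{n+2m}{n^2}\Bigr)^{-1}.
\]
For $m=O(n)$ the last factor is $1+o(1)$, and by Lemma~\ref{lem:susc_exp_conc} and the $2$-Lipschitz continuity of $\q$ we have $\E{\chi(G^{(n)}_m)}=\q(2m/n)^2 n+O(n^{4/5})$, so $q_{m-1}=1-\q(2m/n)^2+O(n^{-1/5})$ uniformly over $m\le An$, for any fixed $A$. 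The $O(n^{-1/5})$ terms contribute $O(A^2n^{9/5})=o(n^2)$, and the rest is a Riemann sum:
\[
\sum_{m=1}^{An}m\,q_{m-1}=\frac{n^2}{4}\int_0^{2A}c\bigl(1-\q(c)^2\bigr)\,\mathrm{d}c+o(n^2).
\]

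For the tail $m>An$ I use the telescoping identity $\sum_{m>m_0}q_{m-1}=\E{|\mathcal C(G^{(n)}_{m_0})|}-1$. An Abel summation reduces $\sum_{m>An}m\,q_{m-1}$ to controlling $An\bigl(\E{|\mathcal C(G^{(n)}_{An})|}-1\bigr)$ and $\sum_{m\ge An}\bigl(\E{|\mathcal C(G^{(n)}_m)|}-1\bigr)$, and I split at $m\approx n\log^2 n$. For $An\le m\le n\log^2 n$ the bound $q_{m-1}\le 2\bigl(1-\q(2m/n)^2\bigr)+O(n^{-1/5})\le 4e^{-m/n}+O(n^{-1/5})$ (using $1-\q(c)\le e^{-c/2}$ for $c\ge 2$, Exercise~\ref{ex:astar_info}(b), and Lemma~\ref{lem:susc_exp_conc}) makes the resulting geometric sums equal to $\varepsilon(A)n^2+o(n^2)$, where $\varepsilon(A)=O\bigl((A+1)e^{-A}\bigr)\to 0$; for $m>n\log^2 n$ the graph $G^{(n)}_m$ is connected with superpolynomially small failure probability (Exercise~\ref{ex:connect_threshold}), so $\E{|\mathcal C(G^{(n)}_m)|}-1$ is negligible and those terms contribute $o(n^2)$. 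Combining with the previous display and letting $n\to\infty$, then $A\to\infty$, gives
\[
\frac{2\,\E{w(T)}}{n^2}\ \longrightarrow\ \frac12\int_0^\infty c\bigl(1-\q(c)^2\bigr)\,\mathrm{d}c.
\]

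It then remains to check $\tfrac12\int_0^\infty c\bigl(1-\q(c)^2\bigr)\,\mathrm{d}c=\zeta(3)$: the range $[0,1]$, where $\q\equiv 0$, contributes $\tfrac14$, while on $(1,\infty)$ one substitutes $s=\q(c)$, so $c=-s^{-1}\log(1-s)$ (from $1-s=e^{-cs}$), and expands -- for instance via the branching-process interpretation of $\q$ (Exercise~\ref{ex:astar_info}(e)) and the tree-function power series -- to recover $\sum_{k\ge1}k^{-3}$. I expect this explicit integral evaluation to be the conceptual heart of the proof: it is where $\zeta(3)$ genuinely enters, exactly as in Frieze's original computation. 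The main \emph{technical} obstacle is the tail estimate of the third paragraph, which must thread Theorem~\ref{thm:susc_conc}, Lemma~\ref{lem:susc_exp_conc}, and Exercise~\ref{ex:connect_threshold} together across their different ranges of validity while keeping the contribution of $m\gg n$ negligible; by contrast the reduction in the first paragraph is soft and the Riemann-sum step is routine.
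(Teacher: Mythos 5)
Your overall route is essentially the paper's: the identity $\E{w(T)}=\sum_{m}\E{X_m}\,q_{m-1}$ with $q_{m-1}=\P{\chi(G^{(n)}_{m})>\chi(G^{(n)}_{m-1})}$ is exactly Proposition~\ref{prop:mst_weight_formula}, your use of (\ref{eq:prob_to_exp_susc}) together with Lemma~\ref{lem:susc_exp_conc} to write $q_{m-1}=1-\alpha(2m/n)^2+O(n^{-1/5})$ matches Corollary~\ref{cor:mst_weight_formulae}, and the Riemann-sum reduction to $\tfrac12\int_0^\infty c\bigl(1-\alpha(c)^2\bigr)\,\mathrm{d}c$ is the same as the paper's proof of Theorem~\ref{thm:frieze}. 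Two places where you diverge or fall short. First, the tail: the paper simply truncates at $m=5n\log n$, using the connectivity threshold (Exercise~\ref{ex:connect_threshold}(b)) to discard everything beyond that, which lands in one step; your Abel-summation reformulation via the telescoping $\sum_{m>m_0}q_{m-1}=\E{|\mathcal C(G^{(n)}_{m_0})|}-1$ is a nice observation and is correct as sketched, but it needs the same connectivity estimate for $m\gtrsim n\log^2 n$ plus a two-scale split, so it is more elaborate without buying anything. Second, and more substantively, you leave the integral $\tfrac12\int_0^\infty c(1-\alpha(c)^2)\,\mathrm dc=\zeta(3)$ as a sketch (``substitute $s=\alpha(c)$ and expand via the tree function''). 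That computation is precisely where $\zeta(3)$ enters and cannot be waved away; the paper carries it out explicitly in Lemma~\ref{lem:zeta_integral} by integrating by parts to get $\int_0^\infty\alpha\alpha'\lambda^2\,\mathrm d\lambda$, using $\lambda=-\alpha^{-1}\log(1-\alpha)$ and then $u=-\log(1-\alpha)$ to reach $\int_0^\infty u^2\tfrac{e^{-u}}{1-e^{-u}}\,\mathrm du=2\zeta(3)$. Your branching-process/tree-function route would also work, but as written it is a plan, not a proof.
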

By $\e{X_m}=(1+o(1))m$ we mean that $\lim_{n \to \infty} \sup_{m \in [{n\choose 2}]} |1-\e X_m/m|=0$. This condition can be relaxed, and the proof can be modified to obtain convergence in probability under suitable hypotheses, but for exporitory reasons we have opted for simplicity over full generality. 
Before beginning the proof, we first note a special case. Suppose that the weights $W_e$ are independent Uniform$[0,1]$ random variables. Then 
$\E{X_k} = k/{n \choose 2}$, $\E{X_k\cdot n^2/2} = (1+o(1))k$. The theorem thus implies that for such uniform edge weights, the toal weight of the random MST of $K_n$ converges to $\zeta(3)$ without renormalization. This is the most often quoted special case of Frieze's result. 

Our proof is based on the following identity for $\E{w(T)}$.  
\begin{prop}\label{prop:mst_weight_formula}
Write $X_1,\ldots,X_{n \choose 2}$ for the increasing ordering of $\mathbf{W}$. Then 
\begin{equation}\label{eq:mst_weight_identity}
\E{w(T)} = \sum_{m=0}^{{n \choose 2}-1} \e X_{m+1} \cdot \p{\chi(G^{(n)}_{m+1}) > \chi(G^{(n)}_{m})}. 
\end{equation}
\end{prop}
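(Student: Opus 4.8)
The plan is to decompose the total MST weight as a sum over the edges actually added by Kruskal's algorithm, then take expectations. In the coupling of Section~\ref{sec:mult_coal_other_versions}, order $E(K_n)$ by increasing $\mathbf{W}$-weight as $e_1,\ldots,e_{n\choose 2}$, with weights $X_1,\ldots,X_{n\choose 2}$, and let $G^{(n)}_m$ have edges $\{e_1,\ldots,e_m\}$. The edge $e_{m+1}$ is placed in $T=\mathrm{MST}(K_n,\mathbf{W})$ precisely when it joins two distinct components of $G^{(n)}_m$, which is exactly the event $\{\chi(G^{(n)}_{m+1}) > \chi(G^{(n)}_m)\}$, since adding an edge inside a component leaves $\chi$ unchanged while adding an edge between components strictly increases it. Hence
\[
w(T) = \sum_{m=0}^{{n\choose 2}-1} X_{m+1}\,\I{\chi(G^{(n)}_{m+1}) > \chi(G^{(n)}_m)}\, .
\]

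Next I would take expectations and argue that $X_{m+1}$ is independent of the indicator $\I{\chi(G^{(n)}_{m+1}) > \chi(G^{(n)}_m)}$. This is the one genuinely substantive point: the \emph{values} $X_1,\ldots,X_{n\choose 2}$ are the order statistics of the exchangeable weights $\mathbf{W}$, whereas the event that $e_{m+1}$ bridges two components of $G^{(n)}_m$ depends only on the \emph{identities} $e_1,\ldots,e_{m+1}$ of the edges in increasing weight order, i.e.\ on the induced permutation of $E(K_n)$. By exchangeability of $\mathbf{W}$ this permutation is uniformly random and independent of the sorted values $(X_1,\ldots,X_{n\choose 2})$ (a standard fact: for a vector with distinct exchangeable coordinates, the rank permutation and the order statistics are independent). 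Therefore $\E{X_{m+1}\,\I{\chi(G^{(n)}_{m+1})>\chi(G^{(n)}_m)}} = \E{X_{m+1}}\cdot\p{\chi(G^{(n)}_{m+1})>\chi(G^{(n)}_m)}$, and summing over $m$ yields \eqref{eq:mst_weight_identity}.

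I expect the independence claim to be the main obstacle — not because it is hard, but because it is the only place where one must be careful: one has to separate the combinatorial data (which edge is smallest, second smallest, \dots) from the metric data (what those smallest weights actually are), and check that the bridging event $\{\chi(G^{(n)}_{m+1})>\chi(G^{(n)}_m)\}$ is measurable with respect to the former alone. Once that is in hand, the rest is the bookkeeping identity above together with linearity of expectation, and no estimates are needed at this stage; the quantitative input (Theorem~\ref{thm:susc_conc} and Lemma~\ref{lem:susc_exp_conc}) will be applied to \eqref{eq:mst_weight_identity} only afterwards to extract the $\zeta(3)$ limit.
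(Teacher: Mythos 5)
Your proposal is correct and matches the paper's proof essentially step for step: the same decomposition of $w(T)$ over added edges, the same observation that $e_{m+1}$ is added iff $\chi$ strictly increases, and the same independence argument separating the order statistics $(X_1,\ldots,X_{n\choose 2})$ from the rank permutation of $E(K_n)$ via exchangeability.
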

\begin{proof}
Let $e_1,\ldots,e_{n \choose 2}$ be the ordering of $E(K_n)$ by increasing weight, so $e_m$ has weight $X_m$. 
In the coupling with the Erd\H{o}s-R\'enyi coalescent, Kruskal's algorithm adds edge $e_k$ precisely if $e_k$ joins distinct components of $G^{(n)}_{k-1}$, which occurs if and only if $\chi(G^{(n)}_k) > \chi(G^{(n)}_{k-1})$. For this coupling we thus have 
\[
w(T) = \sum_{m=0}^{{n \choose 2}-1} X_{m+1} \cdot \I{\chi(G^{(n)}_{m+1}) > \chi(G^{(n)}_{m})}. 
\]
By the exchangeability of $\mathbf{W}$, the vector $(X_1,\ldots,X_{n \choose 2})$ is independent of the ordering of $E(K_n)$. The event that $\chi(G^{(n)}_{m+1}) > \chi(G^{(n)}_{m})$ is measurable with respect to the ordering of $E(K_n)$, so is independent of $(X_1,\ldots,X_{n \choose 2})$. The proposition follows on taking expectations. 
\end{proof}
We use the result of the following exercise to deduce that terms with $m \ge 5n\log n$ play an unimportant role in the summation (\ref{eq:mst_weight_identity}). Fix $1 \le k \le \lfloor n/2\rfloor$ and let $N_k$ be the number of sets $A \subset [n]$ such that, in $G^{(n)}_m$, there are no edges from $A$ to $[n\setminus A]$. Note that $G^{(n)}_m$ is connected precisely if $N_k=0$ for all $1 \le k \le {n\choose 2}$. 
\bsex
\begin{itemize}\label{ex:connect_threshold}
\item[(a)] Let $E_k$ be the event that there are no edges from $[k]$ to $[n]\setminus[k]$. With $p=m/{n\choose 2}$, show that 
$\p{E_k} \le (1-p)^{k(n-k)}$. Deduce that 
\[
\p{N_k > 0} \le n^k(1-p)^{k(n-k)} \le (ne^{-p(n-k)})^k\, .
\]
\item[(b)] Show that $\p{G^{(n)}_{\lceil 5n\log n \rceil}\mbox{ is not connected}} \le n^{-4}$. 
\item[(c)] Show that the bound in Lemma~\ref{lem:susc_exp_conc} in fact holds for all $m \in [{n \choose 2}]$. 
\end{itemize}
\esex
\begin{cor}\label{cor:mst_weight_formulae}
With the notation of Proposition~\ref{prop:mst_weight_formula}, 
we have 
\begin{align*}
\E{w(T)} & \ge \sum_{m=0}^{5n\log n} \e X_{m+1} \left(1-\frac{\e \chi(G^{(n)}_{m})}{n}\right)\, \quad \mbox{ and} \\
\E{w(T)} & \le \left(1+\frac{12\log n}{n}\right) \sum_{m=0}^{5n\log n} \e X_{m+1} \left(1-\frac{\e \chi(G^{(n)}_{m})}{n}\right) + \frac{1}{2n^2}\E{X_{n\choose 2}}\, .
\end{align*}
\end{cor}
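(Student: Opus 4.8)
The plan is to begin from the exact identity of Proposition~\ref{prop:mst_weight_formula}, rewrite the probabilities $\p{\chi(G^{(n)}_{m+1}) > \chi(G^{(n)}_m)}$ in terms of $\E{\chi(G^{(n)}_m)}$ using (\ref{eq:prob_to_exp_susc}), and then split the resulting sum at $m=\lceil 5n\log n\rceil$, comparing the head to the claimed expression and estimating the discarded tail crudely. Taking expectations in (\ref{eq:prob_to_exp_susc}) gives
\[
\p{\chi(G^{(n)}_{m+1}) > \chi(G^{(n)}_m)} = \left(1 - \frac{n+2m}{n^2}\right)^{-1}\left(1 - \frac{\E{\chi(G^{(n)}_m)}}{n}\right),
\]
so Proposition~\ref{prop:mst_weight_formula} becomes $\E{w(T)} = \sum_{m=0}^{{n\choose 2}-1} \e X_{m+1}\,(1-(n+2m)/n^2)^{-1}(1-\E{\chi(G^{(n)}_m)}/n)$. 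Every summand is nonnegative: $\e X_{m+1}\ge 0$ since the weights are nonnegative, $\chi(G^{(n)}_m)\le n$ always, and $n+2m<n^2$ for $m<{n\choose 2}$.

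For the lower bound I would simply discard all terms with $m>5n\log n$ and replace each remaining factor $(1-(n+2m)/n^2)^{-1}$ by $1$. Both steps can only decrease the sum, and (for $n$ large, so that $5n\log n<{n\choose 2}$) this is exactly the asserted inequality.

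For the upper bound, split the sum at $M:=\lceil 5n\log n\rceil$. On the head $m\le M-1$ one has $(n+2m)/n^2\le (1+10\log n)/n$, and a routine estimate gives $(1-(n+2m)/n^2)^{-1}\le 1+12\log n/n$ for $n$ large; bounding the head factor this way and then enlarging the summation range back to $m\le 5n\log n$ (legitimate since the terms are nonnegative) yields the first term on the right of the upper bound. For the tail $\sum_{m=M}^{{n\choose 2}-1}\e X_{m+1}\p{\chi(G^{(n)}_{m+1}) > \chi(G^{(n)}_m)}$ I would revert to the un-rewritten probabilities. Under the coupling with Kruskal's algorithm the event $\{\chi(G^{(n)}_{m+1}) > \chi(G^{(n)}_m)\}$ is precisely ``Kruskal uses $e_{m+1}$'', which for $m\ge M$ forces $G^{(n)}_m$ to be disconnected; by Exercise~\ref{ex:connect_threshold}(b) together with monotonicity of connectivity in $m$, this has probability at most $n^{-4}$. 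Since Kruskal adds at most $n-1$ edges of rank $>M$, and none unless $G^{(n)}_M$ is disconnected, and since exchangeability of $\mathbf W$ makes the order statistics $(X_m)$ independent of the edge ranking, bounding $X_{m+1}\le X_{n\choose 2}$ gives that the tail is at most $(n-1)\E{X_{n\choose 2}}\,\p{G^{(n)}_M\text{ disconnected}}\le n^{-3}\E{X_{n\choose 2}}\le \frac{1}{2n^2}\E{X_{n\choose 2}}$.

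There is no serious obstacle here: the content is carried by the earlier identities and by Exercise~\ref{ex:connect_threshold}. The one point needing care is the tail of the upper bound, because the expectation-rewritten summand carries the factor $(1-(n+2m)/n^2)^{-1}$, which is as large as $n^2/2$ near $m={n\choose 2}$, so one must not estimate the tail in that form. Working directly with $\p{\chi(G^{(n)}_{m+1}) > \chi(G^{(n)}_m)}$ and exploiting that the total number of edges Kruskal adds is $n-1$ (and zero once the graph is connected) is what makes the tail bound go through.
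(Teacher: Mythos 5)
Your proposal is correct and follows essentially the same route as the paper: rewrite the identity of Proposition~\ref{prop:mst_weight_formula} via (\ref{eq:prob_to_exp_susc}), truncate at $m=5n\log n$, bound the prefactor $(1-(n+2m)/n^2)^{-1}$ by $1$ (lower bound) or $1+12\log n/n$ (upper bound) on the head, and control the tail via Exercise~\ref{ex:connect_threshold}(b). The only difference is in the tail estimate: the paper bounds each of the roughly ${n\choose 2}$ tail terms by $\e X_{{n\choose 2}}\cdot n^{-4}$ and sums, while you observe that Kruskal's algorithm adds at most $n-1$ edges total and none of rank $>M$ unless $G^{(n)}_M$ is disconnected, giving the slightly sharper $(n-1)n^{-4}\e X_{{n\choose 2}}$; both fit under $\tfrac{1}{2n^2}\e X_{{n\choose 2}}$, so this is a cosmetic refinement rather than a different method. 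Your remark that the tail must be handled in the un-rewritten probability form, since the factor $(1-(n+2m)/n^2)^{-1}$ blows up near $m={n\choose 2}$, correctly identifies why the argument is structured this way.
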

\begin{proof}
Write 
\[
\p{\chi(G^{(n)}_{m+1}) > \chi(G^{(n)}_{m})} = \E{\p{\chi(G^{(n)}_{m+1}) > \chi(G^{(n)}_{m})~|~G^{(n)}_{m}}}.
\]
We derived an identity for the inner conditional probability in (\ref{eq:prob_to_exp_susc}); using that identity and linearity of expectation, we obtain 
\begin{equation}\label{eq:prob_chi_increase}
\p{\chi(G^{(n)}_{m+1}) > \chi(G^{(n)}_{m})} = \left(1-\frac{\e \chi(G^{(n)}_{m})}{n}\right)\left(1- \frac{n+2m}{n^2}\right)^{-1}. 
\end{equation}
The latter is always at least $1-\e \chi(G^{(n)}_{m})/n$, and the lower bound then follows from Proposition~\ref{prop:mst_weight_formula} by truncating the sum at $m=5n\log n$. 

For the upper bound, note that 
if $G^{(n)}_m$ is connected then $\chi(G^{(n)}_m)=n$, so 
\[
\p{\chi(G^{(n)}_{m+1}) > \chi(G^{(n)}_{m})} \le \p{G^{(n)}_{m})\mbox{ is not connected}}\, .
\]
Using Exercise~\ref{ex:connect_threshold}~(b) and the fact that the $X_i$ are increasing, it follows that 
\begin{align*}
\E{w(T)} & \le \sum_{m=0}^{5 n \log n} \e X_{m+1} \cdot \p{\chi(G^{(n)}_{m+1}) > \chi(G^{(n)}_{m})} \\
		& + \sum_{m=5 n \log n+1}^{{n \choose 2}-1} \e X_{m+1} \cdot \p{G^{(n)}_m\mbox{ is not connected}}\, \\
		& \le \sum_{m=0}^{5 n \log n} \e X_{m+1} \cdot \p{\chi(G^{(n)}_{m+1}) > \chi(G^{(n)}_{m})} + 
			{n \choose 2}\cdot \E{X_{{n \choose 2}}} \cdot \frac{1}{n^4}\, .
\end{align*}
For $m \le 5 n \log n$, $(1- \frac{n+2m}{n^2})^{-1} \le 1+12\log n/n$, and the result follows from (\ref{eq:prob_chi_increase}). 
\end{proof}
To prove Theorem~\ref{thm:frieze}, we use Lemma~\ref{lem:susc_exp_conc} and Corollary~\ref{cor:mst_weight_formulae} to show that after appropriate rescaling, the sum in Proposition~\ref{prop:mst_weight_formula} is essentially a Riemann sum approximating an appropriate integral. The value of that integral is derived in the following lemma. 
\begin{lem}\label{lem:zeta_integral}
\[
\int_0^{\infty} \lambda \cdot (1-\alpha^2(\lambda)) \mathrm{d}\lambda = 2\zeta(3)\, . 
\]
\end{lem}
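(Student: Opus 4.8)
The plan is to use the defining relation of $\alpha$ to perform a change of variables, after which the integral becomes a classical one. Recall from Exercise~\ref{ex:astar_info} that $\alpha(\lambda)=0$ for $\lambda\le 1$, while for $\lambda>1$ the value $\alpha=\alpha(\lambda)\in(0,1)$ is characterized by $e^{-\lambda\alpha}=1-\alpha$, equivalently by
\[
\lambda = g(\alpha):=-\frac{\log(1-\alpha)}{\alpha}=\sum_{k\ge 0}\frac{\alpha^k}{k+1}\,.
\]
Every coefficient of this power series is positive, so $g$ is a smooth, strictly increasing bijection of $(0,1)$ onto $(1,\infty)$ with $g'>0$; hence $\alpha=g^{-1}$ is smooth and strictly increasing on $(1,\infty)$, with $\alpha(\lambda)\downarrow 0$ as $\lambda\downarrow 1$ and $\alpha(\lambda)\uparrow 1$ as $\lambda\uparrow\infty$. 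Also, by Exercise~\ref{ex:astar_info}~(b), $1-\alpha(\lambda)\le 2e^{-\lambda}$ for $\lambda\ge 2$, so $1-\alpha^2(\lambda)\le 4e^{-\lambda}$; in particular the integral in question converges and $\lambda^2\bigl(1-\alpha^2(\lambda)\bigr)\to 0$ as $\lambda\to\infty$.

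Next I would split the integral at $\lambda=1$ and integrate by parts on $(1,\infty)$. On $[0,1]$ the integrand equals $\lambda$, contributing $1/2$. On $(1,\infty)$, taking $u=1-\alpha^2(\lambda)$ and $v=\lambda^2/2$, and using that the boundary term at $\infty$ vanishes (by the exponential tail bound) while at $\lambda=1$ it equals $1/2$, one gets
\[
\int_1^\infty \lambda\bigl(1-\alpha^2(\lambda)\bigr)\,\mathrm{d}\lambda = -\tfrac12 + \int_1^\infty \lambda^2\,\alpha(\lambda)\,\alpha'(\lambda)\,\mathrm{d}\lambda\,.
\]
Adding the two pieces cancels the $\pm 1/2$, and substituting $\alpha=\alpha(\lambda)$ (legitimate since $\alpha$ is a smooth increasing bijection $(1,\infty)\to(0,1)$, with $\lambda=g(\alpha)$) yields
\[
\int_0^\infty \lambda\bigl(1-\alpha^2(\lambda)\bigr)\,\mathrm{d}\lambda = \int_0^1 g(\alpha)^2\,\alpha\,\mathrm{d}\alpha = \int_0^1 \frac{\log^2(1-\alpha)}{\alpha}\,\mathrm{d}\alpha\,.
\]

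Finally I would evaluate this standard integral. The substitution $t=1-\alpha$ turns it into $\int_0^1 \frac{\log^2 t}{1-t}\,\mathrm{d}t$; expanding $\tfrac{1}{1-t}=\sum_{n\ge 0}t^n$, interchanging sum and integral (justified by Tonelli, as all terms are nonnegative), and using $\int_0^1 t^n\log^2 t\,\mathrm{d}t=2/(n+1)^3$ gives $\sum_{n\ge 0}2/(n+1)^3=2\zeta(3)$, which is the claim. There is no real obstacle here: the only points requiring a word of justification are the smoothness and strict monotonicity of $\alpha$ on $(1,\infty)$ (immediate from the explicit formula $\lambda=g(\alpha)$) and the vanishing of the boundary term at infinity (immediate from the bound $1-\alpha(\lambda)\le 2e^{-\lambda}$), after which the change of variables and the term-by-term integration are routine.
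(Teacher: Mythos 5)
Your proof is correct and follows essentially the same route as the paper: integration by parts to trade $\lambda(1-\alpha^2)$ for $\lambda^2\alpha\alpha'$, change of variables using $\lambda=-\log(1-\alpha)/\alpha$ to reach $\int_0^1\log^2(1-\alpha)/\alpha\,\mathrm{d}\alpha$, and a geometric-series expansion to produce $2\zeta(3)$. The only cosmetic differences are that you split at $\lambda=1$ to handle the kink in $\alpha$ explicitly (the paper leaves this implicit) and that you substitute $t=1-\alpha$ while the paper uses $u=-\log(1-\alpha)$, which amount to the same computation.
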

\begin{proof}
\citet{aldous2004omp} write that ``calculation of this integral is quite a pleasing experience''; though the calculation appears in that work, why should we deprive ourselves of the pleasure? Anyway, the proof is short. First, use integration by parts to write 
\[
\int_0^{\infty} \lambda \cdot (1-\alpha^2(\lambda)) \mathrm{d}\lambda = \int_0^{\infty} \alpha(\lambda)\alpha'(\lambda)\lambda^2 \mathrm{d}\lambda = \int_1^{\infty} \alpha(\lambda)\alpha'(\lambda)\lambda^2 \mathrm{d}\lambda\, ,
\]
the second equality since $\alpha(\lambda)=0$ for $\lambda < 1$. The identity $1-\alpha(c)=e^{-c\alpha(c)}$ (this is how we {\em defined} $\alpha$) implies that $\lambda^2 = (\alpha(\lambda)^{-1}\log(1-\alpha(\lambda)))^2$, so we may rewrite the latter integral as 
\[
\int_{1}^{\infty} \frac{\log^2(1-\alpha(\lambda))}{\alpha(\lambda)}\cdot \alpha'(\lambda)\mathrm{d}\lambda = \int_0^1 \frac{\log^2(1-\alpha)}{\alpha}\mathrm{d}\alpha\, ,
\]
where we used the obvious change of variables $\alpha=\alpha(\lambda)$. Now a final change of variables: $u=-\log(1-\alpha)$ transforms this into 
\[
\int_0^{\infty} u^2 \frac{e^{-ku}}{1-e^{-ku}}\mathrm{d}u = \int_0^{\infty} u^2 \sum_{k=1}^{\infty} e^{-ku} \mathrm{d}u\, .
\]
Since $\int_0^{\infty} u^2 e^{-ku} = 2/k^3$, the final expression equals $\sum_{k=1}^{\infty} 2/k^3 = 2\zeta(3)$. 
\end{proof}
Our final step before the proof is to show that the integrand is well-behaved on the region of integration; the straightforward bound we require is stated in the following exercise. Recall that $\alpha$ is continuous on $[0,\infty)$ and is differentiable except at $x=1$. 
\bsex
Let $f(x)=x(1-\alpha^2(x))$, where $\alpha$ is as above. Show there exists $C<\infty$ such that $|f'(x)| \le C$ for all $x\ne 1$. (In fact we can take $C=2$.)
\esex
\begin{proof}[Proof of Theorem~\ref{thm:frieze}]
By the preceding exercise, for all $0 < \eps \le x$, 
\[
\left|\int_{x-\eps}^{x} \lambda(1-\alpha^2(\lambda))\mathrm{d}\lambda - \eps x(1-\alpha^2(x))\right| \le C\eps^2\, .
\]
Taking $\eps=2/n$, $x=2m/n$ and summing over $m \in \{1,\ldots,5n\log n\}$ we obtain in particular that 
\begin{align*}
\sum_{m=1}^{5n\log n} \frac{4m}{n^2} (1-\alpha^2(2m/n)) & = \int_0^{10\log n}\lambda(1-\alpha^2(\lambda))\mathrm{d}\lambda + O\left(\frac{\log n}{n}\right)\, \\
& = 2\zeta(3)-o(1)\, ,
\end{align*}
the second equality by Lemma~\ref{lem:zeta_integral}. 
If $\E{X_m}=(1+o(1))m$ then by the preceding equation, Lemma~\ref{lem:susc_exp_conc}, and the lower bound in Corollary~\ref{cor:mst_weight_formulae}, we have 
\[
\E{w(T)} \ge (1+o(1))\frac{n^2}{2}\cdot \zeta(3)\, ,
\]
and likewise (this time using the upper bound in Corollary~\ref{cor:mst_weight_formulae}) 
\begin{align*}
 \E{w(T)} & \le (1+o(1))\frac{n^2}{2}\cdot \zeta(3) + \frac{1}{2n^2}\E{X_{n\choose 2}}\\
 & = (1+o(1))\frac{n^2}{2}\cdot \zeta(3) + O(1)\, ,
\end{align*}
which completes the proof. 
\end{proof}

\section{Estimating the empirical entropy}\label{sec:final_proof}
We already know the broad strokes of the argument, since they are the same as for our proof of Theorem~\ref{thm:frieze}. Recall that we are trying to approximate $\E{\log \hat{Z}_{\textsc{mc}}(n)}=\E{\log \hat{Z}_{\textsc{mc}}^{\to}(n)}-(n-1)\log 2$. Proposition~\ref{prop:empirical_mc_identity} reduces this to the study of the sum 
\begin{equation}\label{eq:xidef}
\Xi = \sum_{m=0}^{{n \choose 2}- 1} \left(1-\frac{n+2m}{n^2}\right)^{-1}\E{\pran{1-\frac{\chi(G^{(n)}_{m})}{n}}\log \pran{1-\frac{\chi(G^{(n)}_{m})}{n}} }.
\end{equation}
We use Theorem~\ref{thm:susc_conc} and Exercise~\ref{ex:connect_threshold} to approximate this sum by an integral. Before proceeding to the $\eps$'s and $\delta$'s, we evaluate the integral. 
\begin{prop}\label{zmczeta:zmcinformal_prop}
We have 
\begin{equation}\label{eq:twozeta_eq}
\int_0^{\infty} (1-\q^2(\lambda))\cdot\log(1-\q^2(\lambda))\mathrm{d}\lambda=2(\zeta_{\textsc{mc}}+\log 2)\, .
\end{equation}
\end{prop}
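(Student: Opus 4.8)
The plan is to reduce the integral to elementary one‑variable integrals by integrating by parts and then changing variables from $\lambda$ to $\alpha$ via the defining relation $1-\q(\lambda)=e^{-\lambda\q(\lambda)}$, exactly as in the proof of Lemma~\ref{lem:zeta_integral}. Since $\q(\lambda)=0$ for $\lambda\le 1$, the integrand vanishes on $[0,1]$, so we are really integrating $\phi(\q(\lambda)):=(1-\q^2)\log(1-\q^2)$ over $(1,\infty)$; note $\phi$ is bounded and $1-\q^2\downarrow 0$ exponentially fast in $\lambda$, so the integral converges. Writing the defining relation as $\lambda=-\q^{-1}\log(1-\q)$ gives a smooth increasing bijection $\q\mapsto\lambda(\q)$ from $(0,1)$ onto $(1,\infty)$, inverse to $\lambda\mapsto\q(\lambda)$; and $\q$ is concave with $\q'\le 2$ on $(1,\infty)$ by Exercise~\ref{ex:astar_info}, so the change of variables is legitimate.

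First I would integrate by parts: $\int_1^\infty\phi(\q(\lambda))\,\mathrm{d}\lambda=\big[\lambda\phi(\q(\lambda))\big]_1^\infty-\int_1^\infty\lambda\,\phi'(\q)\q'(\lambda)\,\mathrm{d}\lambda$. The boundary terms vanish: at $\lambda=1$ because $\phi(0)=0$, and at $\lambda=\infty$ because $1-\q=e^{-\lambda\q}$ forces $1-\q^2\downarrow 0$ exponentially while $\log(1-\q^2)$ only grows linearly. Changing variables to $\q$ (so $\q'(\lambda)\,\mathrm{d}\lambda=\mathrm{d}\q$ and $\lambda=-\q^{-1}\log(1-\q)$) turns the remaining integral into $\int_0^1\big(\q^{-1}\log(1-\q)\big)\phi'(\q)\,\mathrm{d}\q$. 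A short computation gives $\phi'(\q)=-2\q\big(\log(1-\q^2)+1\big)$, so the factor $\q$ cancels and, after splitting $\log(1-\q^2)=\log(1-\q)+\log(1+\q)$, the whole integral equals
\[
-2\int_0^1\log(1-\q)\big(\log(1-\q)+\log(1+\q)+1\big)\,\mathrm{d}\q = -2\big(A+B+C\big),
\]
where $A=\int_0^1\log^2(1-\q)\,\mathrm{d}\q$, $B=\int_0^1\log(1-\q)\log(1+\q)\,\mathrm{d}\q$, $C=\int_0^1\log(1-\q)\,\mathrm{d}\q$. The substitution $t=1-\q$ gives $A=\int_0^1\log^2 t\,\mathrm{d}t=2$ and $C=\int_0^1\log t\,\mathrm{d}t=-1$, so the integral in question equals $-2(1+B)$.

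The one genuinely non‑routine step is evaluating $B$. Here I would integrate by parts once more, with $u=\log(1+\q)$ and $v=-(1-\q)\log(1-\q)-\q$ (an antiderivative of $\log(1-\q)$ vanishing at $0$); using $\tfrac{1-\q}{1+\q}=\tfrac{2}{1+\q}-1$ this reduces $B$ to $2-2\log 2+2J$, where $J=\int_0^1\frac{\log(1-\q)}{1+\q}\,\mathrm{d}\q$. Finally, the substitution $u=1+\q$ followed by $v=u/2$ turns $J$ into $\log^2 2+\int_{1/2}^1\frac{\log(1-v)}{v}\,\mathrm{d}v=\log^2 2-\mathrm{Li}_2(1)+\mathrm{Li}_2(\tfrac12)$, and Euler's reflection formula at $\tfrac12$ gives $\mathrm{Li}_2(\tfrac12)=\tfrac12\zeta(2)-\tfrac12\log^2 2$, whence $J=\tfrac12\log^2 2-\tfrac12\zeta(2)$ and $B=2-2\log 2+\log^2 2-\zeta(2)$. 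Plugging back, the integral equals $-2(1+B)=2\zeta(2)-6+4\log 2-2\log^2 2$, which is precisely $2(\zeta_{\textsc{mc}}+\log 2)$ by the definition \eqref{zmczeta:zmcint_result} of $\zeta_{\textsc{mc}}$. The main obstacle is really just this classical evaluation of $J$ (equivalently of $B$); everything else is bookkeeping, and one could alternatively quote the value of $\int_0^1\log(1-\q)\log(1+\q)\,\mathrm{d}\q$ from the literature and skip the last paragraph.
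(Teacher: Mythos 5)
Your proof is correct, and it takes a genuinely different --- and more explicit --- route than the paper's. The paper's own proof does \emph{not} integrate by parts: it substitutes $\q=\q(\lambda)$ directly using the closed form for $\q'(\lambda)$, and then $u=-\log(1-\q)$, arriving at the unwieldy integral $\int_0^{\infty} \frac{(2-e^{-u}) (\log(2-e^{-u})-u) (1-e^{-u}-ue^{-u})}{(1-e^{-u})^2} e^{-u}\,\mathrm{d}u$, which it leaves for ``a little effort'' or a computer. You instead integrate by parts first, exactly as the paper itself does in the proof of Lemma~\ref{lem:zeta_integral}; this replaces the messy $\q'$ factor by $\lambda(\q)=-\q^{-1}\log(1-\q)$, and the $\q$ in $\phi'(\q)=-2\q\bigl(\log(1-\q^2)+1\bigr)$ cancels the $\q^{-1}$, so the whole thing collapses to $-2\int_0^1\log(1-\q)\bigl(\log(1-\q)+\log(1+\q)+1\bigr)\,\mathrm{d}\q$. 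From there the evaluation is genuinely elementary: $A=2$, $C=-1$, and the only nontrivial piece $B=\int_0^1\log(1-\q)\log(1+\q)\,\mathrm{d}\q$ reduces via one more integration by parts and the $u\mapsto u/2$ substitution to $\mathrm{Li}_2(1/2)$, which Euler's identity kills. I verified the arithmetic: $J=\tfrac12\log^2 2-\tfrac12\zeta(2)$, $B=2-2\log 2+\log^2 2-\zeta(2)$, and $-2(1+B)=2\zeta(2)-6+4\log 2-2\log^2 2=2(\zeta_{\textsc{mc}}+\log 2)$, matching \eqref{zmczeta:zmcint_result}. Your justification of the integration by parts is also adequate: $\phi(0)=0$ handles the lower endpoint, and $1-\q\sim e^{-\lambda}$ makes $\lambda\phi(\q(\lambda))\sim -2\lambda^2 e^{-\lambda}\to 0$ at infinity, with the remaining integrand $\sim 2\lambda^2 e^{-\lambda}$ integrable. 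In short, you have supplied a complete by-hand proof of a statement the paper only sketches, and you did so by applying the Lemma~\ref{lem:zeta_integral} template more faithfully than the paper itself does.
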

\begin{proof}
A similar calculation to that of Lemma~\ref{lem:zeta_integral}, though decidedly less pleasing. 
Since $\q(\lambda)=0$ for $\lambda \le 1$, we may change the domain of integration to $[1,\infty)$. 
Then use the identity 
\[
\q'(\lambda) = \frac{\q(\lambda)^2(1-\q(\lambda))}{\q(\lambda)+(1-\q(\lambda))\log(1-\q(\lambda))}\, ,
\]
which follows from the fact that $1-\q(\lambda)=e^{-\lambda \q(\lambda)}$ by differentiation. The integral under consideration thus equals
\[
\int_1^{\infty} (1-\q^2(\lambda))\log(1-\q^2(\lambda)) \cdot  \frac{\q(\lambda)+(1-\q(\lambda))\log(1-\q(\lambda))}{\q(\lambda)^2(1-\q(\lambda))}\cdot \q'(\lambda) \mathrm{d}\lambda\, ,
\]
from which the substitution $\q=\q(\lambda)$ gives 
\[
\int_0^{1} \frac{(1+\q)\log(1-\q^2)(\q+(1-\q)\log(1-\q))}{\q^2} \mathrm{d} \q\, .
\]
Substituting $u=-\log(1-\q)$, we have $1-\q=e^{-u}$, $1+\q=2-e^{-u}$ and $\log(1-\q^2)=\log(2-e^{-u})-u$, and the above integral becomes
\[
\int_0^1 \frac{(2-e^{-u}) (\log(2-e^{-u})-u) \cdot (1-e^{-u}-ue^{-u})}{(1-e^{-u})^2}\cdot e^{-u}\mathrm{d}u\, .
\]
This integral can be calculated with a little effort (or easily, for those who accept computer assisted proofs), and equals 
\[
\frac{\pi^2}{3}-6+4\log 2-2 \log ^2(2)
\]
Comparing with (\ref{zmczeta:zmcint_result}) completes the proof (recall that $\zeta(2)=\pi^2/6$). 
\end{proof}

The next lemma generalizes Lemma~\ref{lem:susc_exp_conc}, at the cost of obtaining a non-explicit error bound. We use a slightly different proof technique than for Lemma~\ref{lem:susc_exp_conc}, which exploits that a binomial random variable is reasonably likely to take values close to its mean (see the following exercise). 
\bsex\label{ex:binom_lower_bd}
Show that 
\[
\p{\mathrm{Bin}\left({n \choose 2},\frac{2m}{n^2}\right)=m} = \Omega\left(\frac{1}{n}\right)\, 
\]
uniformly in $0 \le m \le n^2/4$, in that 
\[
\liminf_{n \to \infty} \inf_{m \in \{0,1,\ldots,\lfloor n^2/4\rfloor\}} n \cdot \p{\mathrm{Bin}\left({n \choose 2},\frac{2m}{n^2}\right)=m} > 0. 
\]
\esex
\begin{lem}\label{lem:single_exp_term_approx}
Let $f:[0,1] \to \R$ be continuous. Then 
\[
\limsup_{n \to \infty} \sup_{m \in [{n \choose 2}]} \left|\E{f(\chi(G^{(n)}_m)/n)} - f(\alpha(2m/n)^2)\right| = 0. 
\]
\end{lem}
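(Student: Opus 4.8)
The plan is to promote the mean estimate of Lemma~\ref{lem:susc_exp_conc} to control of the whole distribution of $\chi(G^{(n)}_m)/n$, and then to pass to $\E{f(\cdot)}$ by uniform continuity. Since $f$ is continuous on the compact interval $[0,1]$ it is bounded, say $|f|\le M$, with a modulus of continuity $\omega$ satisfying $\omega(\delta)\to 0$ as $\delta\downarrow 0$; also $\chi(G^{(n)}_m)/n\in[0,1]$ and $\alpha(2m/n)^2\in[0,1]$. Hence for every $\delta>0$ and every $m$,
\[
\left|\E{f(\chi(G^{(n)}_m)/n)}-f(\alpha(2m/n)^2)\right|\le \omega(\delta)+2M\,\p{\,\left|\chi(G^{(n)}_m)/n-\alpha(2m/n)^2\right|>\delta\,}\, ,
\]
so it is enough to show that for each fixed $\delta>0$,
\[
\sup_{m\in[{n \choose 2}]}\p{\,\left|\chi(G^{(n)}_m)/n-\alpha(2m/n)^2\right|>\delta\,}\longrightarrow 0\qquad(n\to\infty)\, ;
\]
the lemma then follows on taking $\limsup_n$ and letting $\delta\downarrow 0$.

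To control this probability I would split $m$ into two overlapping ranges. \emph{Small $m$, say $1\le m\le n^{21/20}/2$.} Put $p=p_m:=2m/n^2$, so that $p\le n^{-19/20}$ and $np=2m/n$. By Exercise~\ref{ex:gnp_cond_dist}, $G^{(n)}_m$ has the conditional law of $G(n,p)$ given $\{|E(G(n,p))|=m\}$, and $|E(G(n,p))|$ is $\mathrm{Bin}({n \choose 2},p)$-distributed; so writing $B=\{|E(G(n,p))|=m\}$ and using $\p{A\cap B}/\p{B}\le \p{A}/\p{B}$, for any event $A$,
\[
\p{A(G^{(n)}_m)}\le \frac{\p{A(G(n,p))}}{\p{\mathrm{Bin}({n \choose 2},2m/n^2)=m}}\, .
\]
I would apply this with $A$ the event $\{\,|\chi(\cdot)/n-\alpha(np)^2|>\delta\,\}$. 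For $n$ large enough that $22n^{-1/5}<\delta$, Theorem~\ref{thm:susc_conc} (dividing the deviation there by $n$, and using $\alpha(np)=\alpha(2m/n)$) bounds the numerator by $6ne^{-12n^{1/10}}$; Exercise~\ref{ex:binom_lower_bd} bounds the denominator below by $c/n$ for some absolute $c>0$ and all $n$ large, uniformly over $m\le n^{21/20}/2\le n^2/4$. Hence $\p{|\chi(G^{(n)}_m)/n-\alpha(2m/n)^2|>\delta}\le (6/c)\,n^2e^{-12n^{1/10}}\to 0$, uniformly over this range of $m$.

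\emph{Large $m$, say $m\ge\lceil 5n\log n\rceil$.} For $n$ large we have $\lceil 5n\log n\rceil\le n^{21/20}/2$, so the two ranges cover all of $[{n \choose 2}]$. Connectivity is monotone under edge addition, so by Exercise~\ref{ex:connect_threshold}(b), $\p{G^{(n)}_m\text{ not connected}}\le\p{G^{(n)}_{\lceil 5n\log n\rceil}\text{ not connected}}\le n^{-4}$. On the event that $G^{(n)}_m$ is connected, $\chi(G^{(n)}_m)=n$, hence $\chi(G^{(n)}_m)/n=1$; meanwhile $2m/n\ge 10\log n\ge 2$, so Exercise~\ref{ex:astar_info}(b) gives $\alpha(2m/n)\ge 1-2e^{-2m/n}\ge 1-2n^{-10}$ and thus $1\ge\alpha(2m/n)^2\ge 1-4n^{-10}$; so $|\chi(G^{(n)}_m)/n-\alpha(2m/n)^2|\le 4n^{-10}$ on that event, giving $\p{|\chi(G^{(n)}_m)/n-\alpha(2m/n)^2|>\delta}\le n^{-4}$ for $n$ large, uniformly over $m\ge\lceil 5n\log n\rceil$. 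Combining the two ranges yields the required uniform bound, completing the proof. I do not expect a genuine obstacle here: all of the estimates are immediate from Theorem~\ref{thm:susc_conc} and Exercises~\ref{ex:gnp_cond_dist}, \ref{ex:binom_lower_bd}, \ref{ex:astar_info} and~\ref{ex:connect_threshold}; the only point needing a moment's care is organizational, namely that Theorem~\ref{thm:susc_conc} reaches all the way up to $m\asymp n^{21/20}$, comfortably past the connectivity threshold $\asymp n\log n$, so that the large-$m$ regime not covered by that theorem can be dispatched separately using merely that $\alpha(c)\to 1$ exponentially fast.
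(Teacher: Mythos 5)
Your proof is correct and follows essentially the same strategy as the paper's: reduce to concentration of $\chi(G^{(n)}_m)/n$ around $\alpha(2m/n)^2$ via uniform continuity of $f$, then split into a ``moderate $m$'' regime handled by Exercise~\ref{ex:gnp_cond_dist}, Exercise~\ref{ex:binom_lower_bd} and Theorem~\ref{thm:susc_conc}, and a ``large $m$'' regime handled by connectivity and the exponential approach of $\alpha$ to $1$.

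Where you differ from the paper is in how you place the cut between the two regimes, and your version is actually the more careful one. The paper splits at $m=n^2/4$ and for $m\le n^2/4$ appeals directly to Theorem~\ref{thm:susc_conc} via the conditioning $|E(G(n,p))|=m$ with $p=2m/n^2$. But Theorem~\ref{thm:susc_conc} is stated only for $p\le n^{-19/20}$, i.e.\ $m\le n^{21/20}/2$, so as written there is a range $n^{21/20}/2<m\le n^2/4$ that the paper's case analysis does not literally cover. Your overlapping split, $m\le n^{21/20}/2$ (so $p\le n^{-19/20}$, Theorem~\ref{thm:susc_conc} applies) and $m\ge\lceil 5n\log n\rceil$ (connectivity applies), correctly covers all of $[{n\choose 2}]$ because $5n\log n\ll n^{21/20}/2$ for large $n$, and thus sidesteps that gap cleanly. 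The remaining small organizational difference is that you state the reduction via a modulus of continuity $\omega(\delta)+2M\,\p{|\chi/n-\alpha^2|>\delta}$, while the paper phrases the same estimate as an infimum of $f(a/n)$ over $a$ in a window of width $22n^{4/5}$ about $n\alpha^2$; these are equivalent.
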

\begin{proof}
Write $\|f\|=\sup_{x \in [0,1]}f(x) < \infty$.
First suppose $m \ge n^2/4$. Then $\alpha(2m/n) \ge \alpha(n/2) \ge 1-2e^{-n/2}$ by Exercise~\ref{ex:astar_info}~(b), so we have $|f(\alpha^2(2m/n))-f(1)|=o(1)$. 
Next, since $\chi(G)=|G|$ whenever $G$ is connected, by Exercise~\ref{ex:connect_threshold}~(b) we have 
\[
|\E{f(\chi(G^{(n)}_m)/n)} - f(1)| \le \|f\| \p{\chi(G^{(n)}_m)\ne n} = o(1)\, . 
\]
This handles the case $m \ge n^2/4$, so we now assume $0 \le m \le n^2/4$. 

Let $p=2m/n^2$, so $np=2m/n$. By Exercise~\ref{ex:gnp_cond_dist}, we have 
\[
\E{f(\chi(G^{(n)}_m)/n)} = \E{f(\chi(G(n,p))/n)~|~|E(G(n,p))|=m}. 
\]
By Exercise~\ref{ex:binom_lower_bd}, there is $C > 0$ such that for all $m \le n^2/4$, 
\begin{align*}
& \p{|\chi(G(n,p))- n\alpha(2m/n)^2| > 22n^{4/5}~|~|E(G(n,p))|=m} \\
\le & Cn \p{|\chi(G(n,p))- n\alpha(2m/n)^2| > 22n^{4/5}}\\
=& o(1)\, ,
\end{align*}
the last line by Theorem~\ref{thm:susc_conc}. It follows that 
\begin{align*}
& \E{f(\chi(G^{(n)}_m)/n)} 	\\
 \ge & \inf_{|a - n\alpha(2m/n)^2| \le 22n^{4/5}} \E{f(a/n)\I{\chi(G(n,p))=a}~|~|E(G(n,p))|=m}  \\
	&  - \|f\| \cdot \p{|\chi(G(n,p))- n\alpha(2m/n)^2| > 22n^{4/5}~|~|E(G(n,p))|=m} \\
= & \inf_{|a - n\alpha(2m/n)^2| \le 22n^{4/5}} f(a/n) - o(1) \\
= & f(\alpha(2m/n)^2)-o(1)\, ; 
\end{align*}
this bound is uniform in $0 \le m \le n^2/4$ since $f$ is continuous and so uniformly continuous on $[0,1]$. 
We likewise have $\E{f(\chi(G^{(n)}_m)/n)} \le f(\alpha(2m/n)^2) + o(1)$. 
\end{proof}
In what follows we only apply the preceding lemma with $m=o({n \choose 2})$, but it seems more satisfying to prove the estimate over the full range of possibilities; handling larger $m$ only added a few lines to the proof. We are now ready to wrap things up. 
\begin{proof}[Proof of Theorem~~\ref{thm:zmc_lower}]
Write $f(x) = (1-\alpha(x)^2) \log(1-\alpha(x)^2)$ for $x \in [0,1)$ and $f(1)=0$. Then $f$ is continuous, is smooth except at $x=1$, and has $\lim_{x \to \infty} f'(x)=0$ and $\lim_{x \downarrow 1} f'(x)=0$. (To see this, use the defining identity for $\alpha$ to find an identity for $f'$, then use the estimates for $\alpha(x)$  from Exercise~\ref{ex:astar_info}.) Let $C = \sup_{x \ne 1} f'(x) < \infty$. 
Since $\int_0^{\infty}(1-\alpha^2(\lambda))\log(1-\alpha^2(\lambda))\mathrm{d}\lambda < \infty$ and the integrand is negative, we have $\lim_{x \to \infty} \int_x^{\infty}(1-\alpha^2(\lambda))\log(1-\alpha^2(\lambda))\mathrm{d}\lambda=0$. 
It follows as in the proof of Theorem~\ref{thm:frieze} that 
\begin{align*}
& \frac{2}{n} \sum_{m=1}^{5n\log n} (1-\alpha^2(2m/n))\log(1-\alpha^2(2m/n)) \\
= & \int_0^{10\log n}(1-\alpha^2(\lambda))\log(1-\alpha^2(\lambda))\mathrm{d}\lambda + O\left(\frac{\log n}{n}\right)\, \\
= & 2(\zeta_{\textsc{mc}}+\log 2)+o(1)\, ,
\end{align*}
where $\zeta_{\textsc{mc}}$ is defined in (\ref{zmczeta:zmcint_result}). Recalling that $\Xi$, from (\ref{eq:xidef}), is the sum we aim to estimate, by Lemma~\ref{lem:single_exp_term_approx} we then have 
\begin{align*}
\Xi &\le  
 \sum_{m=0}^{5n\log n} \E{\log \pran{1-\frac{\chi(G^{n}_{m})}{n}} \cdot \pran{1-\frac{\chi(G^{(n)}_{m})}{n}}}\\
 & = (\zeta_{\textsc{mc}}+\log 2)\cdot n(1+o(1))\, 
\end{align*}
(recall that $x\log x$ is negative on $[0,1]$). 

If $G^{(n)}_m$ is connected then the $m$'th term in the sum $\Xi$ is zero. Since $\inf_{x \in [0,1]} f(x)=-1/e$, it follows by Lemma~\ref{lem:single_exp_term_approx} and Exercise~\ref{ex:connect_threshold}~(b) that 
\begin{align*}
\Xi &\ge   
(1+\frac{12\log n}{n}) \sum_{m=0}^{5n\log n} \E{\log \pran{1-\frac{\chi(G^{n}_{m})}{n}} \cdot \pran{1-\frac{\chi(G^{(n)}_{m})}{n}}}\\
 &\quad - \frac{1}{e} {n \choose 2} \p{G^{(n)}_{5\log n} \mbox{ is not connected}}\, \\
&  = (\zeta_{\textsc{mc}}+\log 2)\cdot n(1+o(1))\, .
\end{align*}
Applying Proposition~\ref{prop:empirical_mc_identity}, the two preceding inequalities yield 
\[
\E{\log\hat{Z}_{\textsc{mc}}(n)} = \E{\log\hat{Z}_{\textsc{mc}}^{\to}(n)}-(n-1)\log 2 = n\cdot (2\log n + \zeta^{\to}_{\textsc{mc}} + o(1))\, ,
\]
which is the assertion of the theorem. 
\end{proof}
\section{Unanswered questions}\label{sec:openproblems}
The partition function of the multiplicative coalescent provides an interesting avenue by which to approach the probabilistic study of the process. It connects up nicely with other perspectives, and offers its own insights and challenges. We saw above that the empirical partition function of the multiplicative coalescent is a subtle and interesting random variable. Here are a few questions related to $\hat{Z}_{\textsc{mc}}(n)$, and more generally to the multiplicative coalescent, that occurred to me in the course of writing these notes and which I believe deserve investigation. 
\begin{itemize}
\item 
The large deviations of 
$\log\hat{Z}_{\textsc{mc}}(n)$ should be interestingly non-trivial. Can a large deviations rate function be derived? This should be related to large deviations for component sizes in the random graph process. Such results exist for fixed $p$ \citep{oconnell98large,biskup07large}, but not (so far as I am aware) for the process as $p$ varies. (Considering the following sort of problem would be a step in the right direction. Let $E_n$ be the event that the largest component of $G(n,p)$ has at least $0.1n$ fewer vertices than average, for all $p \in [2n,3n]$. Find a law of large numbers for $\log \p{E_n}$.) 

\item Relatedly, what partition chains are responsible for the large value of $\E{\hat{Z}_{\textsc{mc}}(n)}$? It is not too hard to show the following: to maximize $\prod_{i=1}^{n-1} n^2(1-\chi(F_i)^2/n)$ one should keep the component sizes as small as possible. In particular, if $n=2^p$ then one maximizes 
this product by first pairing all singletons to form trees of size two, then pairing these trees to form trees of size $4$, etcetera. This shows that for $n=2^p$, 
\[
\mathrm{ess}\sup \hat{Z}_{\textsc{mc}}(n)  = 2^{-(n-1)}\prod_{k=1}^{p=1} \prod_{j=0}^{n/2^k-1} \left(n^2 - 2^{k-1}(n+j\cdot 2^k)\right)\, .
\]
which is within a factor $4$ of $2^{-(n-1)}n^{2(n-1)}e^{-\log_2 n}$. On the other hand, a straightforward calculation shows the probability of choosing two minimal trees to pair at every step is around $e^{-(1+o(1))2n}$, so the contribution to $\e\hat{Z}_{\textsc{mc}}(n)$ from such paths is $n^{2n}e^{-(1+o(1))(2+\log 2)n}$. This is exponentially small compared to $n^{n-2}(n-1)!$, so the lion's share of the expected value comes from elsewhere. 

\item Suppose we condition $\hat{Z}_{\textsc{mc}}(n)$ to be close to $n^{n-2}(n-1)!=\E{\hat{Z}_{\textsc{mc}}(n)}$; we know by Corollary~\ref{cor:exp_decay} that this event has exponentially small probability. Perhaps, under this conditioning, the tree $T^{(n)}_1$ built by the multiplicative coalescent might be similar to that built by the additive coalescent? At any rate, it would certainly be interesting to study, e.g., $\E{\mathrm{height}(T^{(n)}_1)~|~\hat{Z}_{\textsc{mc}}(n)\ge n^{n-2}(n-1)!}$, or more generally to study observables of $T^{(n)}_1$ under unlikely conditionings of $\hat{Z}_{\textsc{mc}}(n)$. 

\item Condition $T^{(n)}_1$ to have exactly $k$ leaves. After rescaling distances appropriately, $T^{(n)}_1$ should converge in the Gromov-Hausdorff sense. What is the limit? Write $\mathbf{E}_k$ for the coresponding conditional expectation; then we should have, for example, $\mathbf{E}_k[\mathrm{diam}(T^{(n)}_1)/n] \to f(k)$ for some function $f(k)$. How does $f$ behave as $k \to \infty$? It is known \citep{addario09critical} that without conditioning, $\E{\mathrm{diam}(T^{(n)}_1}=\Theta(n^{1/3})$. 

\item Pitman's coalescent, Kingman's coalescent, and the multiplicative coalescent correspond to gelation kernels $\kappa(x,y)=x+y$, $\kappa(x,y)=1$, and $\kappa(x,y)=xy$, respectively. Are there further gelation kernels that may be naturally enriched to form interesting forest-valued coalescent processes? 
\end{itemize}



%
%

\end{document}